\documentclass[a4,12pt]{amsart}
\usepackage{amssymb,amstext,amsmath,amscd,amsthm,amsfonts,enumerate,latexsym}
\usepackage{color}
\usepackage{graphicx}
\usepackage[all]{xy}
\usepackage{stmaryrd} %mapsfrom

\usepackage{hyperref}
\usepackage{bm}

\theoremstyle{plain}
\newtheorem{thm}{Theorem}[section]

\newtheorem*{thm*}{Theorem}
\newtheorem*{cor*}{Corollary}

\newtheorem{prop}[thm]{Proposition}

\newtheorem{lem}[thm]{Lemma}
\newtheorem{cor}[thm]{Corollary}

\newtheorem{claim}{Claim}
\newtheorem*{claim*}{Claim}

\theoremstyle{definition}
\newtheorem{defn}[thm]{Definition}

\newtheorem{ex}[thm]{Example}

\newtheorem{fact}[thm]{Fact}
\newtheorem{conj}[thm]{Conjecture}

\newtheorem{ques}[thm]{Question}

\newtheorem{setup}[thm]{Setup}

\theoremstyle{remark}
\newtheorem{rem}[thm]{Remark}

\numberwithin{equation}{thm}

\def\Min{\operatorname{Min}}

\def\Ext{\operatorname{Ext}}

\def\Im{\operatorname{Im}}

\def\Hom{\operatorname{Hom}}

\def\m{\mathfrak m}

\def\p{\mathfrak p}

\newcommand{\rma}{\mathrm{a}}

\newcommand{\rme}{\mathrm{e}}

\newcommand{\rmo}{\mathrm{o}}

\newcommand{\rmr}{\mathrm{r}}

\newcommand{\rmH}{\mathrm{H}}
\newcommand{\rmI}{\mathrm{I}}

\newcommand{\rmK}{\mathrm{K}}

\newcommand{\rmQ}{\mathrm{Q}}

\newcommand{\calR}{\mathcal{R}}

\newcommand{\fka}{\mathfrak{a}}

\newcommand{\fkm}{\mathfrak{m}}

\newcommand{\fkM}{\mathfrak{M}}
\newcommand{\fkN}{\mathfrak{N}}

\newcommand{\mapright}[1]{%
\smash{\mathop{%
\hbox to 1cm{\rightarrowfill}}\limits^{#1}}}

\newcommand{\mapleft}[1]{%
\smash{\mathop{%
\hbox to 1cm{\leftarrowfill}}\limits_{#1}}}

\def\depth{\operatorname{depth}}

\def\height{\mathrm{ht}}
\def\grade{\mathrm{grade}}
\def\Spec{\operatorname{Spec}}
\def\tr{\operatorname{tr}}

\def\red{\operatorname{red}}

\def\adj{\operatorname{adj}}

\title[Nearly Gorenstein blow-up algebras]{Nearly Gorenstein blow-up algebras \\ over two-dimensional regular local rings}

\author[Naoki Endo]{Naoki Endo}
\address{School of Political Science and Economics, Meiji University, 1-9-1 Eifuku, Suginami-ku, Tokyo 168-8555, Japan}
\email{endo@meiji.ac.jp}
\urladdr{https://www.isc.meiji.ac.jp/~endo/}

\author[Ken-ichi Yoshida]{Ken-ichi Yoshida}
\address{Department of Mathematics, College of Humanities and Sciences, Nihon University, 3-25-40 Sakurajosui, Setagaya-Ku, Tokyo 156-8550, Japan}
\email{yoshida.kennichi@nihon-u.ac.jp}

\thanks{2020 {\em Mathematics Subject Classification.} 13H10, 13A30, 13H05.}
\thanks{{\em Key words and phrases.} Nearly Gorenstein ring, trace ideal, blow-up algebra}
\thanks{The first author was partially supported by JSPS Grant-in-Aid for Scientific Research (C) 23K03058 and 26K06773. The second author was partially supported by JSPS Grant-in-Aid for Scientific Research (C) 24K06678.}
\begin{document}

\maketitle

\setlength{\baselineskip} {15.3pt}

\begin{abstract}
In this paper, we slightly extend the framework of nearly Gorenstein graded rings introduced by Herzog-Hibi-Stamate, and investigate the nearly Gorenstein property of the blow-up algebras associated with an ideal, namely the Rees algebra, the extended Rees algebra, and the associated graded ring. We establish characterizations of the nearly Gorenstein property for these algebras and clarify the relations among them. 
In particular, for two-dimensional regular local rings, we obtain a complete classification of the integrally closed ideals for which these blow-up algebras are nearly Gorenstein.
\end{abstract}

%{\footnotesize \tableofcontents}

%%%%%%%%%%%%%%%%%%%%%%%%%%%%%%%%%%%%%%%%%%%%%%%%%%%%%%%%%%%%%%%%%%%%%%%%%%%%%%%%%%%%%%%%%%%%%%%%%%%%%%%%%%%%%%%%%%%%%%%%%%%%%%%%%%%%

%%%%%%%%%%%%%%%%%%%%%%%%%%%%%%%%%%%%%%%%%%%%%%%%%%%%%%%%%%%%%%%%%%%%%%%%%%%%%%%%%%%%%%%%%%%%%%%%%%%%%%%%%%%%%%%%%%%%%%%%%%%%%%%%%%%%%%%%%%%%%%%%%%%%%%

\section{Introduction}

In this paper, we investigate the nearly Gorenstein property of blow-up algebras. 
For an ideal $I$ in a Noetherian ring $A$, among the blow-up algebras associated with $I$, the Rees algebra $\calR(I)=\bigoplus_{n\geq 0} I^n$ is one of the fundamental objects lying at the interface of commutative algebra and algebraic geometry. Indeed, the algebra $\calR(I)$ is the homogeneous coordinate ring of the graph of a rational map whose total space is the blow-up of $\Spec A$ along the subscheme defined by $I$.
Since the pioneering work of \cite{GS2}, followed by substantial developments in, among others, \cite{Ikeda, GN}, the Gorenstein property of Rees algebras has been studied extensively. 
A fundamental result in this direction, known as {\it Goto-Shimoda's theorem}, asserts the following: if $(A,\m)$ is a Cohen-Macaulay local ring with $d=\dim A \ge 2$, and if $I$ is an $\m$-primary ideal of $A$, then $\calR(I)$ is Gorenstein if and only if the associated graded ring $G(I)=\bigoplus_{n\ge0} I^n/I^{n+1}$
is Gorenstein and has $a$-invariant $-2$ (\cite[Theorem (1.2)]{GS2}, \cite[Corollary 3.7]{Ikeda}, \cite[Part II, Corollary (1.4)]{GN}). 
These works have shown that being Gorenstein imposes severe restrictions on both the base ring and the ideal, so that this phenomenon occurs only under rather exceptional circumstances. Accordingly, it is natural to investigate blow-up algebras in a more flexible framework. This broader viewpoint is also essential for a deeper understanding of Gorenstein rings.

On the other hand, although the notion of trace ideals is classical and goes back at least to \cite{AG}, it has regained considerable attention in commutative algebra through the work of Lindo \cite{L,L2} and Lindo-Pande \cite{LP}. In particular, Herzog, Hibi, and Stamate introduced in \cite{HHS} the notion of nearly Gorenstein rings, observing that for a Cohen-Macaulay local ring $(A, \m)$, the trace ideal $\tr_A(\rmK_A)$ of the canonical module $\rmK_A$ detects the non-Gorenstein locus (\cite[Lemma 2.1]{HHS}). 
Thus, nearly Gorenstein rings provide a framework for studying Cohen-Macaulay rings that are close to Gorenstein rings via the trace of the canonical module, and their relationship with the almost Gorenstein property (\cite{BF, GMP, GTT}) has also become an important topic.

Along these lines, the authors, together with Goto and Matsuoka, have investigated the almost Gorenstein property of blow-up algebras, particularly Rees algebras (\cite{GMTY1, GMTY2, GMTY3, GMTY4}). 
In particular, it was shown in \cite{GMTY2} that for every $\m$-primary integrally closed ideal $I$ in a two-dimensional regular local ring $(A, \m)$ with infinite residue field, the Rees algebra $\calR(I)$ is an almost Gorenstein graded ring. 
Classically, by Zariski's theorem (\cite[Part II, Section 12]{Z}, \cite[Appendix 5, Theorem 2']{ZS}, \cite[Theorem 3.7]{Huneke}), the ring $\calR(I)$ is normal. 
In addition, it follows from \cite[Theorem 3.2]{Huneke-Sally} (see also \cite[Theorem]{Verma}) that $\calR(I)$ has minimal multiplicity at the graded maximal ideal $\fkM = \m \calR(I) + \calR(I)_+$. 
Hence, in view of \cite[Theorem 6.6]{HHS}, which asserts that a nearly Gorenstein ring admitting minimal multiplicity is almost Gorenstein, it is natural to ask for a precise characterization of when the Rees algebra $\calR(I)$ is nearly Gorenstein. This question is of particular interest, since it not only deepens our understanding of the almost Gorenstein property, but also clarifies the role played by the canonical trace in the context of blow-up algebras. 

The aim of this paper is to explore the nearly Gorenstein property of blow-up algebras. More specifically, we wish to understand whether Goto-Shimoda's theorem has an analogue in the nearly Gorenstein setting. This leads to the following question.

\begin{ques}\label{ques}
Let $(A, \m)$ be a Cohen-Macaulay local ring with $d = \dim A \ge 2$, and let $I$ be an $\m$-primary ideal of $A$. Assume that the Rees algebra $\calR(I)$ is nearly Gorenstein. 
\begin{itemize}
\item[$(1)$] Is the associated graded ring $G(I)$ nearly Gorenstein?
\item[$(2)$] Is it possible to determine the $a$-invariant of $G(I)$?
\end{itemize}
\end{ques}

In this paper we provide a complete answer to Question \ref{ques} $(2)$. Moreover, with a view toward applications to two-dimensional regular local rings, we concentrate on the case where the base ring is Gorenstein and the ideal has reduction number one; in this setting, we also answer Question \ref{ques} $(1)$ in the affirmative.

Let us now present a more detailed account of our main results and explain how this paper is organized. In Section 2, after recalling the definition and basic properties of trace ideals, we introduce the notion of the nearly Gorenstein property for $H$-local graded rings, that is, graded rings admitting a unique graded maximal ideal. This slightly extends the definition given in \cite[Definition 2.2]{HHS} for positively graded Cohen-Macaulay algebras over a field, and enables us to study the nearly Gorenstein property of blow-up algebras. We further show in Section 2 that the nearly Gorenstein property of the Rees algebra determines the $a$-invariant of the associated graded ring (Theorem \ref{2.10}). 
In Section 3, we establish a characterization of the nearly Gorenstein property of Rees algebras when the base ring is Gorenstein of dimension at least two and the ideal has reduction number one (Propositions \ref{2.13} and \ref{3.5}). In particular, in the case of a two-dimensional regular local ring, we completely determine the ideals whose Rees algebras are nearly Gorenstein.

\begin{thm}[Theorem \ref{3.3}]
Let $(A, \m)$ be a two-dimensional regular local ring with infinite residue class field. Let $I$ be an $\m$-primary ideal of $A$ and assume that $I$ is not a parameter ideal. Then the following conditions are equivalent. 
\begin{itemize}
\item[$(1)$]  The Rees algebra $\calR(I)$ is nearly Gorenstein.  
\item[$(2)$] There exists a regular system of parameters $x, y$ of $A$ and an integer $n \ge 1$ such that $I=(x^2, xy, y^{n+1})$. 
\end{itemize}
When this is the case, $\calR(I)$ is an almost Gorenstein graded normal domain with minimal multiplicity. 
\end{thm}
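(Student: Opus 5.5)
Note that the statement does not assume $I$ integrally closed. The plan is therefore to first reduce to reduction number one, then invoke the Section 3 characterization (Propositions \ref{2.13} and \ref{3.5}), and finally solve the resulting ideal-theoretic condition in a two-dimensional regular local ring.

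\emph{Step 1: reduce to the integrally closed, reduction number one case.} By Theorem \ref{2.10}, if $\calR(I)$ is nearly Gorenstein then the $a$-invariant of $G(I)$ is forced. In dimension two with $G(I)$ Cohen--Macaulay this gives $a(G(I)) = -1$, i.e. $\mathrm{r}_Q(I) \le 1$ for a minimal reduction $Q=(a,b)$. If needed, I would first show that nearly Gorensteinness forces $\calR(I)$ Cohen--Macaulay; the definition of nearly Gorenstein should already presuppose this. By Goto--Shimoda for Cohen--Macaulayness, $G(I)$ is then Cohen--Macaulay with $a(G(I))<0$. We have $\mathrm{r}_Q(I)=0$ only if $I$ is a parameter ideal, which is excluded. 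So $I^2=QI$. A reduction-number-one ideal in a two-dimensional regular local ring need not be integrally closed, so normality must come from the trace condition. I will therefore work directly with the characterization in Propositions \ref{2.13} and \ref{3.5}, which, for Gorenstein $A$ and $I^2=QI$, expresses nearly Gorensteinness of $\calR(I)$ as an equality of ideals in $A$. Concretely, I expect it to read: the trace of $\rmK_{\calR(I)} \cong$ (a shift of) $\bigoplus_n I^{n-1}$-type modules contains $\m\calR(I)+\calR(I)_+$. Degree by degree this becomes a condition of the form $I : \m$-type colon computations, something like $\m \subseteq (Q:I)\,I : \ldots$, or $\mu_A(I)=3$ together with $\m I \subseteq Q$.

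\emph{Step 2: derive the shape of $I$ from that condition.} For Gorenstein $A$ of dimension $2$ with $I^2=QI$, the trace condition should amount to $\m \subseteq (Q:I)$ restricted suitably, i.e. $\m I\subseteq Q$, equivalently $I\subseteq Q:\m$. Since $A/Q$ is Gorenstein Artinian, $Q:\m = Q+(s)$ for a socle element $s$, so $I = Q+(s)$ and $\mu(I)=3$. Then $\mathrm{e}(I) = \ell(A/Q) = \ell(A/I)+1$.

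\emph{Step 3: classify colength-$(\mathrm{e}-1)$ ideals of this form.} Here I would show that $I \not\subseteq \m^2$ fails unless $I=\m$, which is the case $n=0$ and is excluded. Since $\mu(I)=3\le \mathrm{ord}(I)+1$, we get $\mathrm{ord}(I)\le 2$. If $\mathrm{ord}(I)=1$, choose $x\in I\setminus\m^2$; then $I/(x)$ is principal in the DVR $A/(x)$, so $I=(x,y^m)$ is a parameter ideal, contradicting $\mu=3$. Hence $\mathrm{ord}(I)=2$ and $\mu(I)=3=\mathrm{ord}+1$, so $I$ is contracted, $\m I:\ell$ structure applies, and $\ell(A/I)=\mathrm{e}-1$. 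Using $\mathrm{e}(I)=\ell(A/Q)$ and a generic linear form, one writes $I=(x^2,xy,y^{n+1})$ after a change of regular system of parameters, or $I=(x^2, xy + \text{unit}\cdot y^{k}, y^{n+1})$ which normalizes to the former. The converse is a direct verification: $Q=(x^2,y^{n+1}+xy)$-type reduction with $I^2=QI$, $I=Q:\m$, so the criterion of Step 1 holds.

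Finally, the ``when this is the case'' part follows because $(x^2,xy,y^{n+1})=\overline{I}$ is integrally closed (it is monomial and its Newton polygon is saturated). Normality and minimal multiplicity then come from Zariski and Huneke--Sally as cited in the introduction, and almost Gorensteinness from \cite[Theorem 6.6]{HHS}. The main obstacle I expect is Step 1: extracting exactly $\m I\subseteq Q$ from the trace computation of $\rmK_{\calR(I)}$, especially controlling the degree-zero component of the trace.
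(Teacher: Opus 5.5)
Your proposal has a genuine gap. Step 2 keeps only the degree-zero part of the trace condition, and that is not enough. With $\rmK_{\calR}(1)\cong J\calR$, $J=Q:_AI$ and $L_n=(I^n:_AJ)t^n$, the degree-$0$ component of $\tr_{\calR}(\rmK_{\calR})=J\calR\cdot L$ gives $\m\subseteq J$. That is $I=Q:_A\m$, your condition $\m I\subseteq Q$. The degree-$1$ component gives a second, independent requirement $I\subseteq J(I:_AJ)=\m(I:_A\m)$; the two together are Proposition \ref{3.5}.

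Without the second condition your Step 3 classification is false. In $k[[X,Y]]$ take $I=(X^2,XY^2,Y^3)$ and $Q=(X^2,Y^3)$. Then $I^2=QI$, $I=Q:_A\m$, $\mu_A(I)=3$ and $\rmo(I)=2$, and $I$ is even integrally closed, hence contracted. Yet $I:_A\m=\m^2$, so $\calR(I)$ is not nearly Gorenstein (Example \ref{3.5a}). Also $I$ is not of the form $(x^2,xy,y^{n+1})=\m(x,y^n)$, since that form puts an element of order one into $I:_A\m$. For the same reason your converse must also verify $I=\m(I:_A\m)$. This is immediate for $\m(x,y^n)$, but it does not follow from $I=Q:_A\m$.

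Second, in Step 3 the inequality $\mu_A(I)\le\rmo(I)+1$ gives $\rmo(I)\ge 2$, not $\rmo(I)\le 2$. So you have no argument that $I$ is contracted. For example, $I=(X^3,X^2Y^2,Y^3)=(X^3,Y^3):_A\m$ has $I^2=QI$ and $\mu_A(I)=3$, but $\rmo(I)=3$.

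Since $I$ is not assumed integrally closed, proving that it is (equivalently, $\rmo(I)=2$ and $I=\m(x,y^n)$) is the substantive step, and the paper's proof is devoted to it. It uses both conditions and $\mu_A(I)=3$. Write $I:_A\m=(f_1,\dots,f_\ell)$ and choose three minimal generators of $I$ among the $xf_i, yf_i$. Up to symmetry this gives $I=(xf_1,xf_2,yf_1)$ or $I=(xf_1,xf_2,yf_3)$.
\begin{itemize}
\item In the first case, regular-sequence manipulations and Krull's intersection theorem force $(f_1,f_2)=(f_1,x)=(x,y^n)$.
\item In the second case, an induction (Claim \ref{claim1}) yields $I\subseteq\bigcap_n(x^{n+1},y)\subseteq(y)$, a contradiction.
\end{itemize}
Lemma \ref{3.1} then finishes the argument.

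Finally, \cite[Theorem 6.6]{HHS} only shows that the local ring $\calR(I)_{\fkM}$ is almost Gorenstein. The graded statement needs \cite[Theorem 1.4]{GMTY3}.
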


Moreover, Section 3 shows that, in the non-regular Gorenstein case, the nearly Gorenstein property of the Rees algebra forces the base ring to be a hypersurface of multiplicity 2 (Theorem \ref{hyp}).
In Section 4, we focus on extended Rees algebras $\calR'(I)=\bigoplus_{n\in \Bbb Z} I^n$. We give a characterization of their nearly Gorenstein property, and show that this is in fact equivalent to the nearly Gorenstein property of the associated graded ring (Proposition \ref{4.3}). As a consequence, we prove that the nearly Gorenstein property of the Rees algebra implies that of the extended Rees algebra (Corollary \ref{4.5}).
Furthermore, in the case of a two-dimensional regular local ring and the ideals are integrally closed, we obtain a complete classification of the ideals whose extended Rees algebras are nearly Gorenstein.

\begin{thm}[Theorem \ref{4.12}]
Let $(A, \m)$ be a two-dimensional regular local ring with infinite residue class field. Let $I$ be an integrally closed $\m$-primary ideal of $A$ and assume that $I$ is not a parameter ideal. Then the following conditions are equivalent. 
\begin{itemize}
\item[$(1)$] The extended Rees algebra $\calR'(I)$ is nearly Gorenstein.  
\item[$(2)$] The associated graded ring $G(I)$ is nearly Gorenstein.  
\item[$(3)$] Either $I=\m^{\ell}$ for some $\ell \ge 2$, or else there exists a regular system of parameters $x, y$ of A and an integer $n \ge 1$ such that $I=(x^2, xy, y^{n+1})$. 
\end{itemize}
\end{thm}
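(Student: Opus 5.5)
The equivalence (1) $\Leftrightarrow$ (2) is Proposition \ref{4.3}, specialized to our setting. Indeed, $A$ is regular, hence Gorenstein. Moreover, since $I$ is integrally closed in a two-dimensional regular local ring, Lipman--Teissier and Huneke--Sally give $I^2=QI$ for any minimal reduction $Q=(a,b)$. So $I$ has reduction number one and $G(I)$ is Cohen--Macaulay. Thus all the main work goes into (2) $\Leftrightarrow$ (3).

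For (3) $\Rightarrow$ (2) there are two cases. If $I=(x^2,xy,y^{n+1})$, Theorem \ref{3.3} says $\calR(I)$ is nearly Gorenstein, and Corollary \ref{4.5} then makes $\calR'(I)$ nearly Gorenstein; this is (1), hence (2). If $I=\m^\ell$, then $G(I)$ is the Veronese subring $k[X,Y]^{(\ell)}$ of a polynomial ring in two variables. This Veronese is the coordinate ring of a rational normal curve, which is known to be nearly Gorenstein. One can also check it directly. The canonical module is $\omega=\bigoplus_{j} k[X,Y]_{j\ell-2}$, the part of degree $\equiv -2 \pmod \ell$, and its trace contains $\omega\cdot\omega^{-1}$. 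Here $\omega^{-1}$ is the degree-$\equiv 2$ part, and products of the two parts generate all positive-degree monomials, so the trace is the graded maximal ideal.

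For (2) $\Rightarrow$ (3) I will use the reduction-number-one description of the canonical module. Write $G=G(I)$. Since $G$ is Cohen--Macaulay with $a(G)=-1$ (as $r_Q(I)=1$, $d=2$), and $A$ is Gorenstein, I expect $\rmK_G \cong \bigoplus_n (Q^{n}:I^{n+1}... )$. Concretely, I take $\rmK_G$ from the characterization in Proposition \ref{4.3}, which I expect to say: $G$ is nearly Gorenstein if and only if $\m \subseteq$ a certain ideal built from $Q:I$ and $I$. Since $G$ is generated in degree one over $G_0=A/I$, nearly Gorensteinness forces the degree-$0$ part of the trace to contain $\m/I$. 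I therefore plan to extract a condition of the form $\m I \subseteq (Q:I)\cdot I$ or $\m = (Q:I)+\ldots$, that is, a condition relating $Q:I$ and $\m$.

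Next I translate this via Zariski's theory. Write $\ell=\operatorname{ord}(I)$. The colon $Q:I$ is the adjoint ideal $\adj(I)$ (Lipman), with $\mu(I)=\ell+1$, and $e(I)=\ell(A/Q)$. Comparing lengths gives $\ell(A/(Q:I))=\ell(I/Q)=e(I)-\ell(A/I)$. The condition from the previous paragraph should force $Q:I$ to be either $\m^{\ell-1}$ or very close to $\m$. If $Q:I=\m^{\ell-1}$, equality in $\adj(I)\supseteq \m^{\ell-1}$ should force $I$ to have a single base point at the origin, i.e. $I=\m^\ell$. In the remaining case, which should arise only when $\ell=2$, the order-two integrally closed ideals containing $\m$-contracted structure are exactly $(x^2,xy,y^{n+1})$, up to $\m^2$.

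The main obstacle is this last step: turning the trace condition into an exact statement about $\adj(I)$, and classifying the integrally closed $I$ satisfying it. My first attempt would be to use the quadratic transform. The trace condition in degree one should say $\m I^{2}\subseteq \ldots$, which I expect to pass to the transform $I'$. That gives an induction on $e(I)$ which terminates either at $\m^\ell$ or at the $\ell=2$ chain $(x^2,xy,y^{n+1})$.
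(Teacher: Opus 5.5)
Your argument for (1) $\Leftrightarrow$ (2), and for the case $I=(x^2,xy,y^{n+1})$ of (3) $\Rightarrow$ (1), is fine. The rest has genuine gaps.

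For $I=\m^\ell$ you identify $G(\m^\ell)$ with the Veronese ring $k[X,Y]^{(\ell)}$. This is false, since $[G(\m^\ell)]_0=A/\m^\ell$ is not a field for $\ell\ge 2$. The Veronese ring is only the fiber cone $G(\m^\ell)/\m G(\m^\ell)$, and its canonical trace says nothing directly about $G(\m^\ell)$. The repair is immediate from the criterion of Proposition \ref{4.3}. With $Q=(x^\ell,y^\ell)$ one has $J=\m^{\ell-1}$ and $I:_AJ=\m$. Hence $\m\subseteq (I:_AJ)+J$ and $I=J(I:_AJ)$.

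The serious gap is (2) $\Rightarrow$ (3), which you do not actually prove.

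\emph{The condition is never stated.} The trace condition is exactly condition (3) of Proposition \ref{4.3}: $\m\subseteq (I:_AJ)+J$ and $I=(I^2:_AJ)+J(I:_AJ)$, where $J=Q:_AI=\adj(I)$. You only look at the degree-$0$ part, and that cannot suffice. In order $2$ it merely yields $J=\m$, which also holds for the simple ideal $(x^2,xy^2,y^3)$ of Example \ref{4.7}, and that ideal is not nearly Gorenstein.

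\emph{The intermediate claims are wrong.}
First, the containment is $J\subseteq\m^{\rmo(I)-1}$, not $\supseteq$. By Hilbert--Burch, $J=\rmI_{n-2}(\Bbb M)$, where $\Bbb M$ is the $n\times(n-1)$ presentation matrix with entries in $\m$ and $n=\rmo(I)+1$.
Second, $J=\m^{\rmo(I)-1}$ does not force $I=\m^{\rmo(I)}$. For instance, $I=\m^2(x,y^2)$ is integrally closed of order $3$ with $J=\m^2$.
Third, nothing suggests the trace condition passes to quadratic transforms, so the proposed induction has no content.

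\emph{What is actually needed.}
Suppose $\rmo(I)\ge 3$. Then $J\subseteq\m^2$, so the first condition forces $I:_AJ=\m$. Hence $\m J\subseteq I=\rmI_{n-1}(\Bbb M)\subseteq\m J$, that is, $I=\m J$, and so $J=I:_A\m$. Now take each simple factor $I_1$ of a Zariski factorization $I=I_1I'$. Using $\adj(I_1I'):_AI'=\adj(I_1)$ and cancellation, one gets $\adj(I_1)=I_1:_A\m$. The same Hilbert--Burch argument then gives $I_1=\m\,\adj(I_1)$, so $I_1=\m$ by simplicity. Therefore $I=\m^\ell$.

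Suppose $\rmo(I)=2$. Then $I\subseteq J^2$ gives $I:_AJ\subseteq J$, so the degree-$0$ condition yields $J=\m$. The degree-one condition, together with $I^2:_A\m\subseteq\m^3$, then forces $\rmo(I:_A\m)=1$ and $\dim_k[I+\m^3]/\m^3\ge 2$. This rules out simple $I$, whose characteristic form has degree $2$. It also rules out $I=I_1I_2$ with $I_1,I_2\ne\m$. The only remaining possibility is $I=\m(x,y^n)$.
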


Throughout this paper, unless otherwise specified, we use the following notation and terminology. For an arbitrary commutative ring $A$ and an $A$-module $M$, let $\ell_A(M)$ denote the length of $M$. We denote by $\rmQ(A)$ the total ring of fractions of $A$. For $A$-submodules $X$ and $Y$ of $\rmQ(A)$, we set $X:_{\rmQ(A)}Y = \{a \in \rmQ(A) \mid aY \subseteq X\}$. For ideals $I, J$ of $A$, we set $I:_AJ =\{a \in A \mid aJ \subseteq I\}$; hence $I:_AJ = (I:_{\rmQ(A)}J) \cap A$. A fractional ideal of $A$ is a finitely generated $A$-submodule $X$ of $\rmQ(A)$ such that $\rmQ(A)\cdot X = \rmQ(A)$. 
For an ideal $I$ of $A$, we denote by $\overline{I}$ the integral closure of $I$. 
For an integer $t > 0$, let $\rmI_t(\Bbb M)$ be the ideal of $A$ generated by all the $t \times t$ minors of a matrix $\Bbb M$ with entries in $A$. 
When $(A, \m)$ is a Cohen-Macaulay local ring with $d = \dim A$ and $M$ is an $A$-module, let $\mu_A(M)$ denote the minimal number of generators of $M$.
We denote by $\rme(A)$ the multiplicity of $A$, and set $\rmr(A) = \ell_A(\Ext^d_A(A/\m, A))$.
For a graded module $M$ over a graded ring $R$ and an integer $\ell$, let $M(\ell)$ denote the graded $R$-module whose underlying $R$-module is $M$ and whose grading is given by $[M(\ell)]_m = M_{\ell + m}$ for all $m \in \Bbb Z$, where $[-]_m$ denotes the $m$-th homogeneous component.

%%%%%%%%%%%%%%%%%%%%%%%%%%%%%%%%%%%%%%%%%%%%%%%%%%%%%%%%%%%%%%%%%%%%%%%%%%%%%%%%%%%%%%%%%%%%%%%%%%%%%%%%%%%%%%%%%%%%%%%%%%%%%%%%%%%%%%%%%%%%%%%%%%%%%%

\section{Preliminaries}

We give the definitions and basic properties which we need throughout this paper. 

\begin{defn}
Let $R$ be an arbitrary commutative ring and $M$ be an $R$-module. 
Consider the homomorphism of $R$-modules
$$
\tau_M: \Hom_R(M, R) \otimes_RM \to R
$$ 
defined by $\tau_M(f \otimes m)=f(m)$ for each $f \in \Hom_R(M,R)$ and $m \in M$. Set $\tr_R(M)= \Im \tau_M$, which forms an ideal of $R$, and call it the {\it trace ideal of $M$}.  The reader may consult with \cite{DL, DKT, GIK, HHS, L, LP} for more details regarding trace ideals.
\end{defn}

\begin{lem}\label{2.2}
Let $R=\bigoplus_{n \in \Bbb Z}R_n$ be a Noetherian $\Bbb Z$-graded ring and $M$ a finitely generated graded $R$-module. For each $\ell \in \Bbb Z$, the equality $\tr_R(M) = \tr_R(M(\ell))$ holds. 
\end{lem}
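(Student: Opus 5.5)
The trace ideal depends only on the underlying $R$-module, so the plan is to observe that $M$ and $M(\ell)$ have the same underlying $R$-module and hence the same trace. By definition, $M(\ell)$ has exactly the same underlying $R$-module as $M$; only the grading is shifted, via $[M(\ell)]_m = M_{\ell+m}$. The trace ideal $\tr_R(M)$ is defined as the image of the evaluation map
\[
\tau_M \colon \Hom_R(M,R)\otimes_R M \to R,
\]
where $\Hom_R(M,R)$ is the module of \emph{all} $R$-linear maps, not just the graded ones. This construction uses only the $R$-module structure of $M$.

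Concretely, the identity map $\iota \colon M \to M(\ell)$ is an $R$-module isomorphism; it is merely not degree-preserving. Precomposition with $\iota$ gives an identification $\Hom_R(M(\ell),R) = \Hom_R(M,R)$ of sets of $R$-linear maps. Under this identification and $\iota$, the evaluation maps agree: $\tau_{M(\ell)}(f\otimes \iota(m)) = f(m) = \tau_M(f\otimes m)$. Hence the two images coincide, giving $\tr_R(M)=\tr_R(M(\ell))$.

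There is no real obstacle. The only point to watch is that graded and ungraded $\Hom$ should not be conflated. If one prefers to use the graded $\operatorname{Hom}$ (the direct sum of the spaces of homogeneous maps of each degree), note that for $M$ finitely generated over a Noetherian graded ring it equals the full $\Hom_R(M,R)$. Moreover, a homogeneous map $M\to R$ of degree $k$ is the same as a homogeneous map $M(\ell)\to R$ of degree $k+\ell$. So the sets of maps agree either way, and the traces are equal.
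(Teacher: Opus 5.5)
Your proposal is correct and essentially matches the paper's argument. The paper writes the identification in graded form, $\Hom_R(M(\ell),R)\otimes_R M(\ell)\cong \Hom_R(M,R)(-\ell)\otimes_R M(\ell)\cong \Hom_R(M,R)\otimes_R M$, with $\tau_{M(\ell)}$ corresponding to $\tau_M$. Your observation that $M$ and $M(\ell)$ have the same underlying $R$-module is exactly what makes those isomorphisms work.
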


\begin{proof}
This follows from the natural isomorphisms 
$$
\Hom_R(M(\ell), R) \otimes_R M(\ell) \cong \Hom_R(M, R)(-\ell)\otimes_R M(\ell)\cong \Hom_R(M, R) \otimes_R M,
$$
under which the evaluation map $\tau_{M(\ell)}$ corresponds to $\tau_M$. 
Hence $\tr_R(M)=\tr_R(M(\ell))$.
\end{proof}

In \cite[Definition 2.2]{HHS}, the nearly Gorenstein property is defined for Cohen-Macaulay local rings that possess a canonical module, as well as for positively graded Cohen-Macaulay algebras over a field.  The present work requires a slightly broader viewpoint for graded rings with a unique graded maximal ideal. 
We therefore extend the notion accordingly and examine its consequences for graded rings.

\begin{defn}\label{2.3}
Let $R=\bigoplus_{n \in \Bbb Z}R_n$ be a Cohen-Macaulay $\Bbb Z$-graded ring. Assume that $R$ is $H$-local, i.e., it has a unique graded maximal ideal $\fkM$ of $R$, and that $R$ admits the graded canonical module $\rmK_R$. We say that $R$ is {\it nearly Gorenstein} if $\fkM \subseteq \tr_R(\rmK_R)$.
\end{defn}

Note that an $H$-local graded ring is also referred to as ${}^\ast$local. The $R$-module $\rmK_R$ is the canonical module of $R$ in the usual (non-local) sense, i.e., for every prime ideal $\p$ of $R$, the localization $(\rmK_R)_\p$ is a canonical module of $R_\p$. Moreover, $\rmK_R$ is uniquely determined up to  homogeneous isomorphism (\cite[Proposition 4.2]{BB}, \cite[Proposition 3.6.9]{BH}). In particular, one has $(\rmK_R)_{\fkM} \cong \rmK_{(R_\fkM)}$, and $R$ is nearly Gorenstein if and only if so is the local ring $R_{\fkM}$.

\begin{ex}\label{2.4}
Let $R=k[X_1,\ldots,X_d]~(d\ge 2)$ be the polynomial ring over a field $k$, and set $\m=(X_1,\ldots,X_d)$. The Rees algebra $\calR(\m)=\bigoplus_{n \ge 0}\m^n$  admits the presentation
$$
\calR(\m) \cong \small k[X_1, X_2, \ldots, X_d, Y_1, Y_2, \ldots, Y_d]/{\rm I_2}
\begin{pmatrix}
X_1 & X_2 & \cdots & X_d \\
Y_1 & Y_2 & \cdots & Y_d
\end{pmatrix}
$$
where $\rmI_2(\Bbb M)$ denotes the ideal generated by the $2\times 2$-minors of a matrix $\Bbb M$. 
Identifying $\calR=\calR(\m)$ with the above determinantal ring, we may apply \cite[Theorem 1.1]{FHST} to compute the trace of its graded canonical module $\rmK_{\calR}$. More precisely, one has the equality
$$
\tr_{\calR}(\rmK_{\calR}) = (X_1, X_2, \ldots, X_d, Y_1, Y_2, \ldots, Y_d)^{d-2}\calR.
$$
Hence, $\calR(\m)$ is nearly Gorenstein if and only if $d\le 3$.
\end{ex}

In the rest of this section, let $(A, \m)$ be a Cohen-Macaulay local ring with $d = \dim A >0$ and $I~(\ne A)$ an ideal of $A$ with $\height_AI>0$. Denote by
$$
\calR(I) = A[It] \subseteq A[t],  \ \calR'(I) = A[It, t^{-1}] \subseteq A[t, t^{-1}], \  \text{and} \ G(I) = \calR'(I)/t^{-1}\calR'(I) 
$$
the {\it Rees algebra} of $I$, the {\it extended Rees algebra} of $I$, and the {\it associated graded ring} of $I$, respectively, where $t$ is an indeterminate over $A$. For background and basic results on these algebras, the reader is referred to \cite[Section4.5]{BH}, \cite{HIO}, \cite[Chapter 5]{SH}, \cite{Vasconcelos}, and \cite{Vasconcelos2}.

Let $\fkM = \m\calR(I) + \calR(I)_+$ denote the graded maximal ideal of $\calR(I)$ and 
$$
\rma(G(I)) = \sup\{n \in \Bbb Z \mid [\rmH^d_\fkM(G(I))]_n \ne (0) \}
$$
be the $a$-invariant of $G(I)$ (\cite[Definition (3.1.4)]{GW1}), where $\{[\rmH^d_\fkM(G(I))]_n\}_{n \in \Bbb Z}$ denotes the homogeneous components of the $d$-th graded local cohomology module $\rmH^d_\fkM(G(I))$ of $G(I)$ with respect to $\fkM$.

\begin{fact}[{\cite[Theorem 7.1]{Trung-Ikeda}}]\label{2.5} 
The following conditions are equivalent. 
\begin{itemize}
\item[$(1)$] $\calR(I)$ is a Cohen-Macaulay ring.
\item[$(2)$] $G(I)$ is a Cohen-Macaulay ring and $\rma(G(I)) < 0$. 
\end{itemize}
\end{fact}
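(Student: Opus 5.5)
The plan is to compare local cohomology of $\calR=\calR(I)$, $A$ and $G=G(I)$ with respect to $\fkM$ through the two standard graded exact sequences
$$
0 \to \calR_+ \to \calR \to A \to 0, \qquad 0 \to \calR_+(1) \to \calR \to G \to 0 .
$$
Here $[\calR_+(1)]_n = I^{n+1}t^{n}$ sits inside $\calR_n = I^n t^n$, and the cokernel is $I^n/I^{n+1}$. Recall that $\dim \calR = d+1$ and $\dim G = d$ (using $\height_A I>0$). Also, $\rmH^i_\fkM(A)=\rmH^i_\m(A)$ vanishes for $i<d$, and $\rmH^d_\fkM(A)$ is concentrated in degree $0$. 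I will read off the long exact sequences of graded local cohomology degree by degree.

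For $(1)\Rightarrow(2)$, assume $\rmH^i_\fkM(\calR)=0$ for $i\le d$.
\begin{itemize}
\item The first sequence gives $\rmH^i_\fkM(\calR_+)\cong \rmH^{i-1}_\fkM(A)=0$ for $i\le d$.
\item It also gives $0\to \rmH^d_\fkM(A)\to \rmH^{d+1}_\fkM(\calR_+)\to \rmH^{d+1}_\fkM(\calR)\to 0$.
\item The second sequence then gives $\rmH^{i}_\fkM(G)\cong \rmH^{i+1}_\fkM(\calR_+)(1)=0$ for $i\le d-1$, so $G$ is Cohen-Macaulay.
\item In top degree it gives an exact sequence $0\to \rmH^d_\fkM(G)\to \rmH^{d+1}_\fkM(\calR_+)(1)\to \rmH^{d+1}_\fkM(\calR)$.
\end{itemize}
Consequently, for $n\ge 0$ the module $[\rmH^d_\fkM(G)]_n$ embeds into $[\rmH^{d+1}_\fkM(\calR_+)]_{n+1}$. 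That group is an extension of $[\rmH^{d+1}_\fkM(\calR)]_{n+1}$ by $[\rmH^d_\fkM(A)]_{n+1}=0$. Hence $\rma(G)<0$ follows once I know that $[\rmH^{d+1}_\fkM(\calR)]_m=0$ for all $m\ge 1$; in fact I expect $\rma(\calR)=-1$.

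For $(2)\Rightarrow(1)$, assume $G$ is Cohen-Macaulay with $\rma(G)<0$. For $i<d$, the two sequences give isomorphisms
$$
[\rmH^i_\fkM(\calR)]_n \cong [\rmH^i_\fkM(\calR_+)]_n \cong [\rmH^i_\fkM(\calR)]_{n-1}\quad (n\in\Bbb Z),
$$
where the second isomorphism comes from $\rmH^{i-1}_\fkM(G)=\rmH^i_\fkM(G)=0$. For $i=d$, the vanishing $[\rmH^d_\fkM(G)]_n=0$ for $n\ge 0$ together with the degree-$0$ concentration of $\rmH^d_\fkM(A)$ yields:
\begin{itemize}
\item injections $[\rmH^d_\fkM(\calR)]_{n-1}\hookrightarrow[\rmH^d_\fkM(\calR)]_{n}$ in the range $n\ge 1$;
\item isomorphisms $[\rmH^d_\fkM(\calR)]_{n}\cong[\rmH^d_\fkM(\calR)]_{n-1}$ for negative $n$, possibly after tracking the boundary term at $n=0$.
\end{itemize}
Thus each $\rmH^i_\fkM(\calR)$ with $i\le d$ is ``shift-invariant'' in the relevant ranges. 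To conclude that it vanishes, I combine this with two vanishing statements:
\begin{itemize}
\item $[\rmH^i_\fkM(\calR)]_n=0$ for $n\gg0$, which holds because these modules are Artinian graded;
\item $[\rmH^i_\fkM(\calR)]_n=0$ for $n\ll0$ when $i\le d$.
\end{itemize}

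The main obstacle is these degree-vanishing inputs: $[\rmH^{d+1}_\fkM(\calR)]_n=0$ for $n\ge 0$, and $[\rmH^i_\fkM(\calR)]_n=0$ for $n\ll 0$ with $i\le d$. For the first, I would compute with a Čech complex on $a_1t,\dots,a_rt$ (generators of $I$) together with a system of parameters of $A$. After inverting some $a_jt$, the localized Rees algebra becomes $A[It]_{a_jt}$, and its degree-$n$ pieces for $n\ge0$ are controlled by $A_{a_j}$-data. This should kill the nonnegative-degree part of the top cohomology. For the second, I would use the spectral sequence from $\rmH_{\calR_+}$ to $\rmH_\fkM$, together with the fact that $\rmH^j_{\calR_+}(\calR)$ vanishes in negative degrees for $j\le 1$ and is bounded above in general; this is the point where I expect to lean on the standard Trung--Ikeda style bookkeeping.
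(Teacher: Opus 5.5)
The paper gives no proof of this statement; it quotes it from Trung--Ikeda, so your local-cohomology argument is an independent route. Most of it works.
\begin{itemize}
\item The implication $(1)\Rightarrow(2)$ follows from your two sequences.
\item In $(2)\Rightarrow(1)$, so does the vanishing of $\rmH^i_\fkM(\calR)$ for $i<d$.
\item In $(2)\Rightarrow(1)$, so does the vanishing of $[\rmH^d_\fkM(\calR)]_n$ for $n\ge 0$.
\end{itemize}
The input $[\rmH^{d+1}_\fkM(\calR)]_m=0$ for $m\ge1$ also comes from your sequences, with no \v{C}ech computation. Your \v{C}ech sketch is not convincing anyway: with more than $d+1$ generators, $\rmH^{d+1}$ is not the top \v{C}ech term. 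Since $\rmH^{d+1}_\fkM(A)=\rmH^{d+1}_\fkM(G)=0$ and $[\rmH^d_\fkM(A)]_{n+1}=0$, for every $n\ge 0$ you get $[\rmH^{d+1}_\fkM(\calR)]_{n+1}\cong[\rmH^{d+1}_\fkM(\calR_+)]_{n+1}\twoheadrightarrow[\rmH^{d+1}_\fkM(\calR)]_n$. Vanishing in large degrees then finishes.

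\textbf{The gap.} The problem is the negative-degree part of $\rmH^d_\fkM(\calR)$ in $(2)\Rightarrow(1)$, and the isomorphisms you assert there do not exist. For $n\le -1$, your first sequence gives $[\rmH^d_\fkM(\calR_+)]_n\cong[\rmH^d_\fkM(\calR)]_n$. The second gives only $0\to[\rmH^d_\fkM(\calR_+)]_n\to[\rmH^d_\fkM(\calR)]_{n-1}\to[\rmH^d_\fkM(G)]_{n-1}$, and the last term is nonzero for infinitely many $n<0$. So you have only injections $[\rmH^d_\fkM(\calR)]_n\hookrightarrow[\rmH^d_\fkM(\calR)]_{n-1}$, pointing downwards. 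Everything then rests on $[\rmH^d_\fkM(\calR)]_n=0$ for $n\ll0$.

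The two sequences cannot supply this, since nothing controls the connecting maps $[\rmH^d_\fkM(G)]_n\to[\rmH^{d+1}_\fkM(\calR)]_{n+1}$. Your proposed justification is also wrong. Put $\calR'=A[It,t^{-1}]$. Then $\rmH^1_{\calR_+}(\calR)$ does not vanish in negative degrees, because $\calR'/\calR=\bigoplus_{n<0}At^n$ embeds into it. More generally, the $E_2$-terms $\rmH^p_\m([\rmH^q_{\calR_+}(\calR)]_n)$ with $p+q=d$ need not vanish for $n\ll 0$; for example take $q=d$, $I$ $\m$-primary, $d\ge 2$. Extracting the lower vanishing from the spectral sequence would require proving that certain differentials into $\rmH^d_\m(A)$ are injective. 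That is the real content of the implication, not bookkeeping.

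\textbf{How to close it.} Compare with $\calR'$ and use the element $t^{-1}$.
\begin{itemize}
\item $t^{-1}$ is a homogeneous nonzerodivisor on $\calR'$ with $\calR'/t^{-1}\calR'=G$, and $\rmH^{d-1}_\fkM(G)=0$. Hence multiplication by $t^{-1}$ is injective on $\rmH^d_\fkM(\calR')$.
\item Therefore $\rmH^d_\fkM(\calR')$ embeds in its localization $\rmH^d_\fkM(\calR')_{t^{-1}}\cong\rmH^d_\fkM(A[t,t^{-1}])=\bigoplus_{n\in\mathbb{Z}}\rmH^d_\m(A)t^n$. This uses $\sqrt{\fkM A[t,t^{-1}]}=\sqrt{\m A[t,t^{-1}]}$.
\item The module $\calR'/\calR$ is $\calR_+$-torsion, so $\rmH^i_\fkM(\calR'/\calR)=\bigoplus_{n<0}\rmH^i_\m(A)t^n$. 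This is $0$ for $i=d-1$.
\item The map $\calR'\to\calR'/\calR$ factors through $A[t,t^{-1}]\to A[t,t^{-1}]/A[t]\cong\calR'/\calR$. So for $n<0$ the map $[\rmH^d_\fkM(\calR')]_n\to[\rmH^d_\fkM(\calR'/\calR)]_n=\rmH^d_\m(A)$ is injective.
\item The exact sequence $0\to\rmH^d_\fkM(\calR)\to\rmH^d_\fkM(\calR')\to\rmH^d_\fkM(\calR'/\calR)$ then forces $[\rmH^d_\fkM(\calR)]_n=0$ for all $n<0$. This completes $(2)\Rightarrow(1)$.
\end{itemize}
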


\begin{rem}\label{2.6}
We denote by $S$ (resp. $T$) the set of all the homogeneous non-zerodivisors on $\calR(I)$ (resp. $\calR'(I)$). Since $\grade_AI > 0$, it follows that
$$
S^{-1}\calR(I) = K[t, t^{-1}] = T^{-1}\calR'(I)
$$
where $K=\rmQ(A)$ denotes the total ring of fractions of $A$. 
\end{rem}

Introduced by Northcott and Rees (\cite{NR}), for ideals $J$ and $Q$ of $A$ with $Q \subseteq J$, we say that $Q$ is a {\it reduction} of $J$ if $J^{r+1} = QJ^r$ for some $r \ge 0$; the least such integer $r$, denoted by $\red_Q(J)$, is called the {\it reduction number} of $J$ with respect to $Q$.   
The following lemma is well-known to experts; for the sake of completeness, we include a brief proof.

\begin{lem}[{cf. \cite[pages 202--203]{GS2}}]\label{2.7}
%Let $(A, \m)$ be a Cohen-Macaulay local ring with $d = \dim A >0$. Let $I$ be an $\m$-primary ideal of $A$ 
Let $I$ be an $\m$-primary ideal of $A$. Assume that $I$ contains a parameter ideal $Q$ of $A$ as a reduction and $G(I)$ is a Cohen-Macaulay ring. Then $\rma(G(I)) = \red_Q(I)-d$. 
\end{lem}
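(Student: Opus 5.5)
Write $Q=(a_1,\dots,a_d)$ and let $f_i=a_it+I^2\in G(I)_1$ denote the initial forms of the $a_i$. The plan is to reduce modulo these initial forms to an Artinian graded ring and read off the $a$-invariant from its top nonzero degree.

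First, since $Q$ is a reduction of $I$, the ideal $(f_1,\dots,f_d)G(I)$ is primary to the graded maximal ideal $\fkM G(I)$. Indeed, $G(I)/(f_1,\dots,f_d)$ is a quotient of $G(I)/QG(I)=\bigoplus_n I^n/(QI^{n-1}+I^{n+1})$. This quotient vanishes in degrees $n>r=\red_Q(I)$, because $I^n=QI^{n-1}$ there. In degree $0$ it equals $A/I$, which has finite length because $I$ is $\m$-primary. Hence $f_1,\dots,f_d$ is a homogeneous system of parameters of degree $1$ for the $d$-dimensional ring $G(I)$. Since $G(I)$ is Cohen-Macaulay, it is a regular sequence.

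Next, I would use the standard behaviour of the $a$-invariant modulo a homogeneous regular element of degree $1$. If $f$ is such an element on a Cohen-Macaulay graded module $M$ of dimension $s$, the exact sequence $0\to M(-1)\xrightarrow{f} M\to M/fM\to 0$ gives the injection $\rmH^{s-1}(M/fM)\hookrightarrow \rmH^s(M)(-1)$. Its image is the kernel of multiplication by $f$ on $\rmH^s(M)(-1)$. This yields $\rma(M/fM)=\rma(M)+1$: the top nonzero component of $\rmH^s(M)$ is killed by $f$, so it appears shifted up by one. Iterating $d$ times, $\rma(G(I))=\rma(\overline{G})-d$, where $\overline{G}=G(I)/(f_1,\dots,f_d)G(I)$ is Artinian and $\rma(\overline G)$ is its top nonzero degree.

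It then remains to show that $\overline{G}_n\neq 0$ exactly when $n\le r$, so that $\rma(\overline{G})=r$. Because $f_1,\dots,f_d$ is a regular sequence, the Valabrega--Valla criterion gives $(f_1,\dots,f_d)G(I)=QG(I)$ as ideals of $G(I)$, via $Q\cap I^{n+1}=QI^n$. Hence $\overline{G}_n=I^n/QI^{n-1}$ for $n\ge1$. By the definition of the reduction number, $I^n=QI^{n-1}$ for $n\ge r+1$ and $I^r\ne QI^{r-1}$. So $\overline{G}_r\ne0$ (for $r=0$ use $\overline{G}_0=A/I\ne0$) and $\overline{G}_n=0$ for $n>r$.

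The main obstacle is this identification $(f_1,\dots,f_d)G(I)=QG(I)$, i.e. Valabrega--Valla, which depends on the regularity of the initial forms. I would cite it. Alternatively, I would note that $I^n=QI^{n-1}+I^{n+1}$ for all $n>r$ already forces $\overline G_n=0$ in those degrees. For $\overline G_r\ne0$ I would use Valabrega--Valla in degree $r$, or argue by Nakayama: if $I^r\subseteq QI^{r-1}+I^{r+1}$, then iterating with $I^{r+1}=QI^r$ gives $I^r=QI^{r-1}$, a contradiction.
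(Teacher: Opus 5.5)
Your argument is essentially the paper's. The initial forms $a_it$ are a homogeneous system of parameters of degree $1$, hence a regular sequence, and the $a$-invariant rises by one modulo each of them. The paper simply cites \cite[Remark (3.1.6)]{GW1} for the shift you prove by hand. The top degree of the Artinian quotient is $r=\red_Q(I)$; the paper reads this off by identifying the quotient with $G(I/Q)$ via Valabrega--Valla.

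One intermediate claim in your last step is false, though it does no damage. From $Q\cap I^{n}=QI^{n-1}$ you get $\overline{G}\cong G(I/Q)$, that is, $\overline{G}_n=I^n/(QI^{n-1}+I^{n+1})$. This is just the formula from your first paragraph; it does not give $\overline{G}_n=I^n/QI^{n-1}$ for all $n\ge 1$. The latter fails for $1\le n<r$. There minimality of $r$ gives $I^{n+1}\ne QI^n=Q\cap I^{n+1}$, so $I^{n+1}\not\subseteq Q$, hence $I^{n+1}\not\subseteq QI^{n-1}$, and $\overline{G}_n$ is a proper quotient of $I^n/QI^{n-1}$.

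You only use degrees $n\ge r$, and there $I^{n+1}=QI^n\subseteq QI^{n-1}$ gives $\overline{G}_n=I^n/QI^{n-1}$ directly. So Valabrega--Valla is not needed in this step at all, and what you call the main obstacle is not one. Your fallback argument is the right one, and it needs no iteration or Nakayama: if $r\ge 1$, then $I^r\subseteq QI^{r-1}+I^{r+1}=QI^{r-1}+QI^r=QI^{r-1}$ already contradicts the minimality of $r$.
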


\begin{proof}
Let $Q=(a_1, a_2, \ldots, a_d)$. Since $Q$ is a reduction of $I$, the sequence $a_1t, a_2t, \ldots, a_dt$ constitutes a homogeneous system of parameters of $G(I)$. As $G(I)$ is Cohen-Macaulay, it forms a regular sequence on $G(I)$. Hence, by \cite[Corollary 2.7]{VV} we have $Q \cap I^n = QI^{n-1}$ for all $n \in \Bbb Z$. It follows that $G(I)/(a_1t, a_2t, \ldots, a_dt) \cong G(I/Q)$ and $\rma(G(I/Q)) = \red_Q(I)$. 
Finally, \cite[Remark (3.1.6)]{GW1} yields 
$$
\rma(G(I)) = \rma(G(I)/(a_1t, a_2t, \ldots, a_dt)) - d = \red_Q(I)-d
$$
as desired.
\end{proof}

\begin{rem}\label{2.8}
When $d=2$ and $I$ is $\m$-primary, Lemma \ref{2.7} together with \cite[Corollary 2.7]{VV} shows that $\calR(I)$ is Cohen-Macaulay if and only if $I^2=QI$. Moreover, a necessary and sufficient condition for $\calR(I)$ to be a Gorenstein ring is that $G(I)$ is Gorenstein and $\rma(G(I)) =-2$ (\cite[Corollary 3.7]{Ikeda}). 
Assuming that $A$ is Gorenstein, this pair of conditions is equivalent to $\red_Q(I)=0$, that is, $I=Q$.
\end{rem}

Recall that the ring $A$ is {\it generically Gorenstein} if $A_{\p}$ is Gorenstein for every $\p \in \Min A$.

\begin{lem}\label{2.9}
Assume that $A$ admits the canonical module $\rmK_A$. If $\calR(I)$ is nearly Gorenstein, then $A$ is generically Gorenstein. 
\end{lem}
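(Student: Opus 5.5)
Since the statement only involves the minimal primes of $A$, I will localize the Rees algebra at a suitable prime over a minimal prime of $A$ and use that nearly Gorenstein rings are Gorenstein on the punctured spectrum. Let $R=\calR(I)$ with graded maximal ideal $\fkM$, and fix $\p\in\Min A$. Set $P=\p A[t]\cap R = \bigoplus_{n\ge0}(\p\cap I^n)t^n$. This is a homogeneous prime of $R$, because $\p A[t]$ is prime in $A[t]$.

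First I check that $P\ne\fkM$. Since $\height_A I>0$, we have $I\not\subseteq\p$, so $It\not\subseteq P$, whereas $It\subseteq\fkM$.

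Next I identify the localization. Choose $a\in I\setminus\p$. Then $at\notin P$, and inverting $at$ gives $R[(at)^{-1}]\supseteq A[t,t^{-1}][1/a]$-type elements. More precisely, for $b\in A$ we have
$$
b=\frac{(ba)t}{at},\qquad \text{and}\qquad t=\frac{(at)}{a},
$$
where $a\notin P$. Hence inverting $a$ and $at$ shows that $R_P$ is a localization of $A_a[t,t^{-1}]$. Indeed $I_a=A_a$, so $\calR(I)_a=A_a[t]$, and inverting $at$ also inverts $t$. Therefore $R_P\cong A[t,t^{-1}]_{\p A[t,t^{-1}]}$, which equals $A_\p[t]_{\p A_\p[t]}$ with $t$ inverted. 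This ring is flat local over $A_\p$ with closed fibre the field $\kappa(\p)(t)$.

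Now I use the nearly Gorenstein hypothesis, $\fkM\subseteq\tr_R(\rmK_R)$. Since $P\ne\fkM$ and $P$ is homogeneous, $\fkM\not\subseteq P$, so $\tr_R(\rmK_R)\not\subseteq P$. Trace commutes with localization because $\rmK_R$ is finitely presented, so $\tr_{R_P}((\rmK_R)_P)=R_P$. By the remark after Definition 2.3, $(\rmK_R)_P$ is a canonical module of $R_P$. A canonical module whose trace is the whole ring is locally free of rank one, hence free, so $R_P$ is Gorenstein (this is \cite[Lemma 2.1]{HHS}).

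Finally I descend along the flat local homomorphism $A_\p\to R_P$. Its closed fibre is a field, and Gorensteinness descends along flat local maps, so $A_\p$ is Gorenstein. As $\p\in\Min A$ was arbitrary, $A$ is generically Gorenstein.

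The main point requiring care is the identification of $R_P$ as a flat local extension of $A_\p$. That identification rests entirely on the choice of $a\in I\setminus\p$, which is available precisely because $\height_A I>0$. The existence of $\rmK_A$ ensures that $R$ has a graded canonical module, so Definition 2.3 applies.
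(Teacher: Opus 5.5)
Your proposal is correct and follows essentially the same argument as the paper. You use the same prime $P=\p A[t]\cap\calR(I)$ and identify $\calR(I)_P$ with $(A_\p[t])_{\p A_\p[t]}$. You then get Gorensteinness of $\calR(I)_P$ from the trace condition and descend it to $A_\p$.

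The differences are cosmetic. You show $P\neq\fkM$ via $It\not\subseteq P$ rather than via $\dim \calR(I)_P=0$. You invert $a$ and $at$ rather than localizing at $A\setminus\p$. You also make explicit the flat-descent step that the paper leaves implicit.
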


\begin{proof}
Set $\calR=\calR(I)$. Note that $\calR$ admits a graded canonical module $\rmK_{\calR}$. 
Let $\p \in \Min A$ and set $P=\p A[t] \cap \calR$. Then $P \cap A = \p$. Since $\height_A I>0$, we have $I_\p=A_\p$ and hence $\calR_\p \cong \calR_{A_\p}(I_\p) \cong A_\p[t]$. Therefore
$$
\calR_P \cong (\calR_\p)_{P \calR_\p} \cong (A_\p[t])_{\p A_\p[t]}.
$$
In particular, $\dim \calR_P=\dim A_\p=0$, so $P\in\Min \calR$. 
Thus $P\ne \fkM$. Since $\calR$ is nearly Gorenstein, \cite[Proposition 2.3 (a)]{HHS} implies that $\calR_P$ is Gorenstein. Via the above isomorphisms, $(A_\p[t])_{\p A_\p[t]}$ is Gorenstein, and hence so is $A_\p$. Consequently, the local ring $A$ is generically Gorenstein.
\end{proof}

\begin{thm}\label{2.10}
Let $(A, \m)$ be a Cohen-Macaulay local ring with $d = \dim A >0$ admitting the canonical module $\rmK_A$. Let $I~(\ne A)$ be an ideal of $A$ with $\height_AI>0$ and assume that $\calR(I)$ is nearly Gorenstein.  
Then the following assertions hold true. 
\begin{itemize}
\item[$(1)$] One has $\rma(G(I)) = -1, -2, -3$. 
\item[$(2)$] If $\rma(G(I)) \le -2$, then $A$ is nearly Gorenstein. 
\item[$(3)$] If $\rma(G(I)) = -3$, then $I=\m$. 
\end{itemize}
\end{thm}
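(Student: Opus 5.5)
We would work with the explicit description of the graded canonical module of $\calR=\calR(I)$ coming from its Cohen-Macaulayness. Since $\calR$ is nearly Gorenstein it is Cohen-Macaulay, so by Fact \ref{2.5} $G=G(I)$ is Cohen-Macaulay with $a:=\rma(G)<0$. By Lemma \ref{2.9}, $A$ is generically Gorenstein, so we may take $\rmK_A$ to be an ideal. We then use the Goto-Nishida/Herzog-Simis-Vasconcelos description
$$
\rmK_{\calR} \cong \bigoplus_{n\ge 1} \omega_n t^n ,
$$
where $\omega_n \subseteq \rmK_A$ is given by the canonical filtration. The two facts we need are that $\omega_n=\rmK_A$ for $1\le n\le -a-1$ (or $n \le 1$), and that $\omega_n \subseteq I^{n+a+1}\rmK_A$-type containments hold in general. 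This realizes $\rmK_\calR$ as a graded submodule of $\rmK_A[t]$, concentrated in degrees $\ge 1$, with lowest degree component $\omega_1 t$.

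We compute the trace using Lemma \ref{2.2} and Remark \ref{2.6}. Any $f\in\Hom_\calR(\rmK_\calR,\calR)$ extends to $K[t,t^{-1}]$ and is multiplication by an element of $\calR:_{K[t,t^{-1}]}\rmK_\calR$, so $\tr(\rmK_\calR)=\rmK_\calR\cdot(\calR:\rmK_\calR)$, computed degreewise. The key step is to locate the degree-$0$ part and the degree-$1$ part of the trace.
\begin{itemize}
\item[$\bullet$] The degree-$0$ part must contain $\m$. Since $\rmK_\calR$ starts in degree $1$, the multipliers of degree $-m$ pair $\omega_m$ into $A$. Comparing with $\omega_m$ and $I^m$ will show that $[\tr]_0 \subseteq \tr_A(\rmK_A)$, and that the generators producing $\m$ must come from low $m$.
\item[$\bullet$] The degree-$1$ part must contain $It$.
\end{itemize}

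For (1), we argue that if $a\le -4$, then $\omega_1=\omega_2=\omega_3=\rmK_A$ forces every homogeneous multiplier $\phi t^{-m}$ with $m\ge 1$ to satisfy $\phi\rmK_A\subseteq I^{j}$ for $j$ in a shifted range. Tracking degrees, the trace in degree $0$ then lands in $I\cdot\tr_A(\rmK_A)$ or in $I^2$, contradicting $\m\subseteq[\tr]_0$ once we use Nakayama and $I\ne A$. The bound $a\le-1$ is already given, so this yields $a\in\{-1,-2,-3\}$.

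For (2), when $a\le -2$ we have $\omega_1=\rmK_A$ and $\omega_2=\rmK_A$, and the degree-$0$ trace equals $\sum_m \omega_m(\calR:\rmK_\calR)_{-m}\subseteq \tr_A(\rmK_A)$. Hence $\m\subseteq \tr_A(\rmK_A)$.

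For (3), when $a=-3$ the degree-$0$ computation gives $\m \subseteq I\cdot(\text{something}) + \ldots$, and we expect to extract $\m\subseteq I$, that is, $I=\m$.

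The main obstacle is establishing the precise shape of $\omega_n$ relative to $a$; we would cite Goto-Iai/Herzog-Simis-Vasconcelos for $\rmK_\calR$ of Cohen-Macaulay Rees algebras. A secondary difficulty is the bookkeeping of degrees in the colon $\calR:\rmK_\calR$. We would first test the case $A$ Gorenstein with $I=\m$ in Example \ref{2.4} ($d=3$ gives $a=-3$) as a sanity check.
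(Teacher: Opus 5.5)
Your framework is the paper's: the Goto--Iai filtration $\rmK_{\calR}\cong\sum_{i\ge1}\omega_it^i$ with $\omega_i=\rmK_A$ for $i\le -a-1$, the colon $L=\calR:\rmK_{\calR}\subseteq K[t,t^{-1}]$, and the degree-$0$ part of $\tr_{\calR}(\rmK_{\calR})=\rmK_{\calR}L$. However, part (3) is not proved. You only say you ``expect to extract'' $\m\subseteq I$ from an unspecified expression.

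The missing step is the degree-shifting move you already gesture at in (1), applied one degree lower. When $a=-3$ you have $\omega_1=\omega_2=\rmK_A$. So for $\phi\in L_{-1}$ you can pair $\phi t^{-1}$ with $\omega_2t^2$ instead of $\omega_1t$. This gives $\phi t^{-1}\cdot\omega_2t^2\subseteq\calR_1=It$, that is, $\phi\rmK_A\subseteq I$. Hence $\m\subseteq[\tr_{\calR}(\rmK_{\calR})]_0=\omega_1L_{-1}=\rmK_AL_{-1}\subseteq I$, and so $I=\m$. The same move with $\omega_3t^3$ is exactly what makes (1) work: for $a\le-4$ it gives $\phi\rmK_A\subseteq I^2$, hence $\m\subseteq I^2$. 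Your alternative ``$I\cdot\tr_A(\rmK_A)$'' plays no role.

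The identity $[\tr_{\calR}(\rmK_{\calR})]_0=\omega_1L_{-1}$ must also be stated and justified. It holds because $L_{-m}=0$ for $m\ge2$: indeed $L_{-m}t^{-m}\cdot\omega_1t\subseteq\calR_{1-m}=(0)$, and $\omega_1$ contains a non-zerodivisor of $A$. Without this, your inclusion $\sum_m\omega_mL_{-m}\subseteq\tr_A(\rmK_A)$ in (2) is unjustified. For $m\ge2$ you would only know $L_{-m}\omega_m\subseteq A$, and $\omega_m$ may be strictly smaller than $\rmK_A$.

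Several of your auxiliary claims are wrong as stated:
\begin{itemize}
\item For $a=-2$ one has $\omega_2\subsetneq\rmK_A$, not $\omega_2=\rmK_A$. Only $\omega_1=\rmK_A$ is available there, and only that is needed for (2).
\item The containment ``$\omega_n\subseteq I^{n+a+1}\rmK_A$'' is reversed. What holds is $I^{n+a+1}\rmK_A\subseteq\omega_n$ for $n\ge -a-1$, which comes from $I^i\omega_j\subseteq\omega_{i+j}$. For instance, in dimension two with $I^2=QI$ and $I\ne Q:_AI$, one has $\omega_1=Q:_AI\not\subseteq I$.
\item Your bullet claim $[\tr_{\calR}(\rmK_{\calR})]_0\subseteq\tr_A(\rmK_A)$ is only available when $\omega_1=\rmK_A$, i.e.\ $a\le-2$, not in general.
\end{itemize}
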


\begin{proof}
Set $\calR=\calR(I)$, $G=G(I)$, and $a = \rma(G(I))$. By Fact \ref{2.5}, the ring $G$ is Cohen-Macaulay and $a < 0$. 
Since $A$ is generically Gorenstein, we realize the canonical module as an ideal of $A$ with $\grade_A\rmK_A=1$ (\cite[Satz~6.21]{HK}, \cite[Remark~3.2]{GTT}, and \cite[Proposition~3.3.18]{BH}).
By \cite[Theorem 2.1 and Proposition 2.5]{GI}, there exists a unique family $\omega=\{\omega_i\}_{i \in \Bbb Z}$ of $A$-submodules of $\rmK_A$ satisfying the following conditions: 
\begin{itemize}
\item[$(a)$] $\omega_{i+1} \subseteq \omega_i$ for all $i \in \Bbb Z$.
\item[$(b)$] $\omega_i=\rmK_A$ for all $i\le -a-1$, and $\omega_{-a}\subsetneq \rmK_A$.
\item[$(c)$] $I^i \omega_j \subseteq \omega_{i+j}$ for all $i, j \in \Bbb Z$.
\item[$(d)$] $\rmK_{\calR}\cong \sum_{i\ge 1}\omega_i t^i$, where $\rmK_{\calR}$ denotes the graded canonical module of $\calR$.
\end{itemize}
Identifying $\rmK_{\calR}$ with $\sum_{i\ge 1}\omega_i t^i$, we regard $\rmK_{\calR}$ as a graded $\calR$-submodule of $S^{-1}\calR = K[t,t^{-1}]$, where $S$ is the set of homogeneous non-zerodivisors on $\calR$ and $K=\rmQ(A)$ stands for the total ring of fractions of $A$. 
In particular, $\rmK_{\calR}$ is a fractional ideal of $\calR$.

Let $F=\rmQ(\calR)$ be the total ring of fractions of $\calR$, and set $L=\calR:_F \rmK_{\calR}$. 
We claim that
\[
L=\calR:_{S^{-1}\calR}\rmK_{\calR}.
\]
Indeed, since $a<0$, condition $(b)$ gives $\omega_0=\rmK_A$. Hence, by $(c)$ we get $I\rmK_A = I\omega_0 \subseteq \omega_1$. 
Because $\height_AI>0$, the ideal $I\rmK_A$ contains a non-zerodivisor $c$ on $A$. Then $ct\in \omega_1 t\subseteq \rmK_{\calR}$ is a homogeneous non-zerodivisor on $\calR$. Moreover, since $ct\in It$, we have $ct\in S$. Pick $\alpha\in L$. Then $\alpha\,\rmK_{\calR}\subseteq \calR$, and hence in particular $\alpha\cdot(ct)\in\calR$. Therefore
$$
\alpha=\frac{\alpha\cdot(ct)}{ct}\in S^{-1}\calR.
$$
This shows that $L\subseteq S^{-1}\calR$, proving the claim. Consequently, $L$ is a graded $\calR$-submodule of $S^{-1}\calR$, and we may write $L=\sum_{n\in\Bbb Z} L_n t^n$.

$(1)$ Let $n\le -2$ be an integer. Since $L$ is graded, we have 
$L_n t^n\cdot (\omega_1 t) \subseteq \calR_{n+1}=(0)$. 
As $\omega_1$ contains a non-zerodivisor on $A$, it follows that $L_n=(0)$ for all $n\le -2$. Since $\rmK_{\calR}$ is a fractional ideal of $\calR$, we have
$\tr_{\calR}(\rmK_{\calR})=\rmK_{\calR}\cdot L$. 
In particular, using $L_n=(0)$ for $n\le -2$, we obtain
$[\tr_{\calR}(\rmK_{\calR})]_0=\omega_1\cdot L_{-1}$. 
Assume, to the contrary, that $a\le -4$. Then condition $(b)$ gives
$\omega_1=\omega_2=\omega_3=\rmK_A$. 
Since $\calR$ is nearly Gorenstein, we have $\m=\fkM_0\subseteq [\tr_{\calR}(\rmK_{\calR})]_0$, and hence
$$
\m \subseteq \omega_1\cdot L_{-1}=\rmK_A\cdot L_{-1}=\omega_3\cdot L_{-1}.
$$
Multiplying by $t^2$, we see that $\m t^2 \subseteq (\omega_3 t^3)\cdot (L_{-1}t^{-1}) \subseteq \calR_2 = I^2 t^2$, 
and therefore $\m\subseteq I^2$. This contradicts $d >0$ (indeed, $\m\subseteq I^2$ forces $I=\m$ and hence $\m=\m^2$, so $A$ would be Artinian). 
Consequently, $a=\rma(G)$ must be one of $-1$, $-2$, or $-3$.

$(2)$ Assume that $a\le -2$. Then condition $(b)$ yields $\omega_1=\rmK_A$. 
Taking the degree $0$ part of $L\cdot \rmK_{\calR}\subseteq \calR$, we get
$
L_{-1}t^{-1}\cdot (\omega_1 t)\subseteq \calR_0=A,
$
and hence $L_{-1}\subseteq A:_K \rmK_A$. 
Therefore
$$
\m =\fkM_0 \subseteq [\tr_{\calR}(\rmK_{\calR})]_0 = \omega_1 t\cdot L_{-1}t^{-1}
= \rmK_A\cdot L_{-1}
\subseteq \rmK_A\cdot (A:_K \rmK_A)
=\tr_A(\rmK_A).
$$
Thus $A$ is nearly Gorenstein.

$(3)$ Assume that $a=-3$. By condition $(b)$, we see that $\omega_1=\omega_2=\rmK_A$. It follows that
\[
\m=\fkM_0 \subseteq [\tr_{\calR}(\rmK_{\calR})]_0
=\omega_1\cdot L_{-1}
=\rmK_A\cdot L_{-1}
=\omega_2\cdot L_{-1}.
\]
Multiplying by $t$, we obtain $\m t\subseteq (\omega_2 t^2) \cdot (L_{-1}t^{-1})\subseteq \calR_1 = I t$. Hence $\m\subseteq I$ and we conclude that $I=\fkm$.  This completes the proof.
\end{proof}

\begin{ex}\label{2.11}
Let $R=k[X_1, \ldots, X_d]~(d \ge 3)$ be the polynomial ring over a field $k$, endowed with the standard grading in which $\deg X_i=1$ and $R_0=k$.
Fix a homogeneous polynomial $f\in R$ of degree $d-3$ and set $B=R/(f)$. 
Let $M=(X_1, \ldots, X_d)B$ be the graded maximal ideal of $B$, and consider the local ring $A = B_M$. Write $\m=MA$ and set $G=G(\m)$. Then $A\cong G_{\fkN}$, where $\fkN$ denotes the graded maximal ideal of $G$.
Set $a=a(G)$. By \cite[Remark~(3.1.6)]{GW1}, we have $a=-d+\deg f=-3$. In particular, the Rees algebra $\calR=\calR(\m)$ is Cohen-Macaulay but not Gorenstein. 
Following \cite[Theorem 2.1 and Proposition 2.5]{GI}, let $\omega=\{\omega_i\}_{i\in\Bbb Z}$ denote the uniquely determined family of ideals of $A$ satisfying:
\begin{itemize}
\item[$(a)$] $\omega_{i+1}\subseteq \omega_i$ for all $i\in\Bbb Z$.
\item[$(b)$] $\omega_i=A$ for all $i\le -a-1$, and $\omega_{-a}\subsetneq A$.
\item[$(c)$] $\m^i\omega_j\subseteq \omega_{i+j}$ for all $i,j\in\Bbb Z$.
\item[$(d)$] $\rmK_{\calR}\cong \sum_{i\ge 1}\omega_i t^i$.
\item[$(e)$] $\rmK_G\cong \bigoplus_{i\ge 1}\omega_{i-1}/\omega_i$.
\end{itemize}
Here, $\rmK_{\calR}$ and $\rmK_G$ denote the graded canonical modules of $\calR$ and $G$, respectively. 
Since $G$ is Gorenstein, a comparison of graded components in $\rmK_G\cong G(a)$ yields $\omega_i=\m^{i+a+1}$ for all $i\ge -a$.
Let $F=\rmQ(\calR)$ be the total ring of fractions of $\calR$, and set $L=\calR:_F \rmK_{\calR}$. 
A direct computation shows that $L_n=\m^{n-a-1}$ for all $n\ge -1$, while $L_n=(0)$ for all $n\le -2$. Consequently
$$
[\tr_{\calR}(\rmK_{\calR})]_0 = \omega_1 \cdot L_{-1}=\m  \ \ \ \text{and} \ \ \ [\tr_{\calR}(\rmK_{\calR})]_1 = \omega_2t^2 \cdot L_{-1}t^{-1} + \omega_1t\cdot L_0=\m t. 
$$
Therefore $\calR$ is nearly Gorenstein but not Gorenstein.
\end{ex}

%%%%%%%%%%%%%%%%%%%%%%%%%%%%%%%%%%%%%%%%%%%%%%%%%%%%%%%%%%%%%%%%%%%%%%%%%%%%%%%%%%%%%%%%%%%%%%%%%%%%%%%%%%%%%%%%%%%%%%%%%%%%%%%%%%

\section{Nearly Gorenstein Rees algebras}

In this section, we explore the nearly Gorenstein property of Rees algebras. Let us fix the notation and standing assumptions.

\begin{setup}
Let $(A, \m)$ be a Gorenstein local ring with $d = \dim A \ge 2$ and $K=\rmQ(A)$ its total ring of fractions. Let $I$ be an $\m$-primary ideal of $A$ and assume that $I$ contains a parameter ideal $Q$ of $A$ such that $I^2=QI$. Let 
$$
\calR=\calR(I) =A[It] \subseteq A[t]
$$
be the Rees algebra of $I$, where $A[t]$ denotes the polynomial ring over $A$. We denote by $\fkM = (\m, It)\calR$ the graded maximal ideal of $\calR$. 
Set $J=Q:_AI$. In particular, $I \subseteq J$. 
\end{setup}

By \cite[Theorem 2.7 (a)]{U}, the graded canonical module of $\calR$ is given by $$
\rmK_{\calR} \cong ((1,t)^{d-3}t, Jt^{d-1})\calR = \bigoplus_{i=1}^{d-2}At^i \oplus \bigoplus_{i \ge d-1}I^{i-d+1}Jt^i
$$ as graded $\calR$-modules. Hence, $\calR$ is Gorenstein if and only if $I=Q$ when $d=2$, whereas for $d \ge 3$, it is Gorenstein if and only if $I=J$ and $d=3$ (\cite[Corollary 2.11 (a)]{U}). 
Since the situations in dimension two and in dimension at least three are somewhat different, we treat them separately. We begin with the case where $d \ge 3$. 

Set $X=((1,t)^{d-3}, Jt^{d-2})\calR$, which is a fractional ideal of $\calR$. Then $\rmK_{\calR}(1)\cong X$, so that
$$
\tr_{\calR}(\rmK_{\calR}) = X \cdot L
$$
where $L=\calR:_F X$ and $F=\rmQ(\calR)$. Since $1\in X$, we actually have $L=\calR:_\calR X$, so $L$ is a graded ideal of $\calR$.

\begin{lem}\label{2.12}
For each $n \ge 0$,  the equality $L_n = [I^{n+d-3} \cap (I^{n+d-2}:_AJ)]t^n$ holds in $\calR$. 
\end{lem}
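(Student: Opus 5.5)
Since $L=\calR:_{\calR}X$ is a graded ideal of $\calR$ and $X$ is generated as an $\calR$-module by finitely many homogeneous elements, the plan is to list these generators and compute the colon one degree at a time. The generators of $X$ are the monomials $t^j$ for $0\le j\le d-3$, coming from $(1,t)^{d-3}$, together with $Jt^{d-2}$. Hence, for $\alpha=ct^n$ with $c\in I^n$ and $n\ge 0$,
$$
\alpha\in L \iff c\,t^{n+j}\in\calR \ \ (0\le j\le d-3) \quad\text{and}\quad cJ\,t^{n+d-2}\subseteq \calR .
$$
Here we use that $X$ is generated by $(1,t)^{d-3}$ and $Jt^{d-2}$, so it suffices to test membership of $\alpha$ times each generator.

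Next I translate these conditions degree-wise. Because $\calR_m=I^mt^m$, the first family of conditions says $c\in I^{n+j}$ for all $0\le j\le d-3$. Since the powers of $I$ form a decreasing chain, this is equivalent to the single condition $c\in I^{n+d-3}$. The second condition says $cJ\subseteq I^{n+d-2}$, that is, $c\in I^{n+d-2}:_AJ$. Combining the two gives $L_n=[I^{n+d-3}\cap(I^{n+d-2}:_AJ)]t^n$. Note that the condition $c\in I^n$ coming from $\alpha\in\calR$ is already implied by $c\in I^{n+d-3}$, because $d\ge 3$.

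Two small points need justification. The first is that $L$ is graded, so that it suffices to test homogeneous elements. This holds because $X$ is a graded submodule generated by homogeneous elements; alternatively, one can see it directly from the gradedness of $\calR$ inside $A[t]$. The second is that the generating set of $X$ is the one described above. Since the computation is elementwise, I do not expect a real obstacle. The only point needing care is the degree bookkeeping for the generator $Jt^{d-2}$, where the relevant degree is $n+d-2$ rather than $n+d-3$.
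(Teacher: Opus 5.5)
Your proposal is correct and follows essentially the same route as the paper. Both test $ct^n$ against the generators of $X$, reduce the $(1,t)^{d-3}$ condition to $c\in I^{n+d-3}$, and reduce the $Jt^{d-2}$ condition to $c\in I^{n+d-2}:_AJ$. Your remarks that $I^{n+j}$ decreases in $j$, and that $c\in I^n$ is implied by $c\in I^{n+d-3}$, spell out steps the paper leaves implicit.
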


\begin{proof}
Let $x\in \calR_n$ and write $x=ct^n$ with $c\in I^n$. Then $x \in L_n$ if and 
only if 
$x(1,t)^{d-3}\subseteq \calR$ and $x(Jt^{d-2})\subseteq \calR$. 
The first condition holds if and only if $(ct^n)t^{d-3}\in \calR_{n+d-3}$, 
that is, $c\in I^{n+d-3}$. The second condition is equivalent to $(ct^n)(Jt^{d-2})\subseteq \calR_{n+d-2}$, or equivalently $c\in I^{n+d-2}:_A J$.
Therefore $x\in L_n$ if and only if $c\in I^{n+d-3}\cap (I^{n+d-2}:_A J)$, and hence $L_n=\bigl[I^{n+d-3}\cap (I^{n+d-2}:_A J)\bigr]t^n$, as desired.
\end{proof}

\begin{prop}\label{2.13}
Let $(A, \m)$ be a Gorenstein local ring with $d = \dim A \ge 3$ and $I$ an $\m$-primary ideal of $A$. Assume that $I$ contains a parameter ideal $Q$ of $A$ such that $I^2=QI$, and set $J=Q:_A I$. Then the following assertions hold true. 
\begin{itemize}
\item[$(1)$] If the Rees algebra $\calR(I)$ is nearly Gorenstein, then $d \le 4$. 
\item[$(2)$] Suppose that $d=3$. Then $\calR(I)$ is nearly Gorenstein but not Gorenstein if and only if $\m = I:_AJ$ and $I=(I^2:_AJ) + \m J$.
\item[$(3)$] Suppose that $d=4$. Then $\calR(I)$ is not Gorenstein, and it 
 is nearly Gorenstein if and only if $I=\m$.
\end{itemize}
\end{prop}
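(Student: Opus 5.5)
The whole computation rests on the identity $\tr_{\calR}(\rmK_{\calR}) = X\cdot L$ with $X=((1,t)^{d-3}, Jt^{d-2})\calR$ and $L=\calR:_\calR X$, together with Lemma \ref{2.12}. Since $\fkM=(\m,It)\calR$ is generated in degrees $0$ and $1$, the ring $\calR$ is nearly Gorenstein if and only if
$$
\m\subseteq [X L]_0 \quad\text{and}\quad It\subseteq [XL]_1 .
$$
Because $L$ is an ideal of $\calR$, we have $L_n=(0)$ for $n<0$. The generators of $X$ are $t^i$ for $0\le i\le d-3$ and $Jt^{d-2}$. It follows that
$$
[XL]_0=L_0 \quad\text{and}\quad [XL]_1=L_1+t\,L_0 \ (d\ge 4), \qquad [XL]_1=L_1+Jt\,L_0 \ (d=3).
$$

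For (1), I would argue directly. Lemma \ref{2.12} gives $L_0=I^{d-3}\cap(I^{d-2}:_AJ)$. If $d\ge 5$, then $L_0\subseteq I^2\subseteq\m^2$, so $\m\subseteq L_0$ would force $\m=\m^2$, which is absurd. As a cross-check, $G(I)$ is Cohen–Macaulay since $I^2=QI$, and Lemma \ref{2.7} gives $\rma(G(I))=1-d$ when $I\ne Q$ and $-d$ when $I=Q$. Theorem \ref{2.10}(1) then yields $d\le4$ as well.

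For (3), non-Gorensteinness follows from Ulrich's criterion quoted above, since Gorenstein requires $d=3$. Suppose $\calR$ is nearly Gorenstein. If $I=Q$, then $\rma(G(I))=-4$, which contradicts Theorem \ref{2.10}(1). Hence $\rma(G(I))=-3$, and Theorem \ref{2.10}(3) gives $I=\m$.

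For the converse, let $I=\m$; then $\m\ne Q$, since otherwise we are back in the excluded case. From $\m^2=Q\m\subseteq Q$ we see that $\m/Q$ lies in the socle of the Gorenstein Artinian ring $A/Q$. That socle has length one, so $J=Q:_A\m=\m$. Then $L_0=\m\cap(\m^2:_A\m)=\m$, so $[XL]_0=\m$ and $[XL]_1\supseteq tL_0=\m t=It$. Thus $\calR$ is nearly Gorenstein.

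For (2), let $d=3$, so $X=(1,Jt)\calR$. Lemma \ref{2.12} gives $L_0=I:_AJ$ and $L_1=I\cap(I^2:_AJ)$. The key observation is that $I^2:_AJ\subseteq I$. Indeed, if $cJ\subseteq I^2\subseteq Q$, then $c\in Q:_AJ=Q:_A(Q:_AI)=I$ by linkage in the Gorenstein ring $A$. Hence $L_1=I^2:_AJ$.

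Non-Gorensteinness means $I\ne J$, which is equivalent to $I:_AJ\ne A$. Given this, the condition $\m\subseteq L_0$ becomes $\m=I:_AJ$. The degree-one condition becomes
$$
I\subseteq (I^2:_AJ)+J(I:_AJ)=(I^2:_AJ)+\m J .
$$
The reverse inclusion is automatic: $I^2:_AJ\subseteq I$ was shown above, and $\m J=J(I:_AJ)\subseteq I$. This gives exactly the stated equivalence. The main point needing care is the linkage step $Q:_A(Q:_AI)=I$, which uses that $A$ is Gorenstein and $Q$ is a parameter ideal. I would also check that the degenerate case $I=Q$, where $J=A$, is correctly excluded by the condition $\m=I:_AJ$: in that case $I:_AJ=Q$, and $Q=\m$ would force $I=J=A$, which is impossible.
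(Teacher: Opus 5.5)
Your proofs of (1), (2) and the ``only if'' half of (3) are sound. The ``if'' half of (3), however, has a genuine gap at the sentence ``then $\m\ne Q$, since otherwise we are back in the excluded case.'' You excluded $I=Q$ only under the hypothesis that $\calR(I)$ is nearly Gorenstein (via $\rma(G(I))=-4$ and Theorem \ref{2.10}(1)). That hypothesis is exactly what you are now trying to prove, so the step is circular.

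Worse, the step cannot be repaired. Let $A$ be regular of dimension $4$ and $I=\m$. The hypotheses of the proposition hold with $Q=\m$, so $J=A$. Lemma \ref{2.12} then gives $L_0=\m\cap\m^2=\m^2$, hence $[\tr_{\calR}(\rmK_{\calR})]_0=\m^2\not\supseteq\m$, and $\calR(\m)$ is not nearly Gorenstein. This agrees with Example \ref{2.4} for $d=4$ and with $\rma(G(\m))=-4$. So (3) as stated fails for regular $A$.

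The paper's own proof passes over the same case in its ``and vice versa''. With $I=\m$, the inclusion $\m\subseteq I\cap(I^2:_AJ)$ holds only when $\m J\subseteq\m^2$, that is, when $J\ne A$. What your argument actually establishes is: for $d=4$, $\calR(I)$ is nearly Gorenstein if and only if $I=\m\ne Q$, i.e.\ $I=\m$ and $A$ is not regular. Once that hypothesis is added, your socle argument ($J=Q:_A\m=\m$, $L_0=\m$, $tL_0=It$) completes the proof.

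There is also a smaller error in the closing ``check'' of (2). $Q=\m$ gives $I=\m$ and $J=A$, not $I=J=A$, and this case does satisfy $\m=I:_AJ$ and $I=(I^2:_AJ)+\m J$. This is harmless. Your derivation of (2) never used $I\ne Q$, since the inclusion $I^2:_AJ\subseteq I$ is trivial when $J=A$. Moreover, $\calR(\m)$ over a three-dimensional regular local ring is indeed nearly Gorenstein but not Gorenstein by Example \ref{2.4}.

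Your direct proof of (1), via $L_0\subseteq I^{d-3}\subseteq\m^2$ for $d\ge 5$, is a valid and more elementary alternative to the paper's route through Lemma \ref{2.7} and Theorem \ref{2.10}(1).
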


\begin{proof}
$(1)$ Since $G(I)$ is Cohen-Macaulay by Fact \ref{2.5}, Lemma \ref{2.7} yields $\rma(G(I))=\red_Q(I)-d \le 1-d$. Applying Theorem \ref{2.10} (1), we conclude that $d\le 4$.

$(2)$ Suppose that $d=3$. From Lemma \ref{2.12} we have 
$$
[\tr_{\calR}(\rmK_{\calR})]_0 =I:_AJ \ \  \text{and} \ \ [\tr_{\calR}(\rmK_{\calR})]_1 =(I \cap (I^2 :_AJ))t + J\cdot (I:_AJ) t. 
$$
Since $\fkM=(\m, It)\calR$, it follows that $\calR$ is nearly Gorenstein but not Gorenstein if and only if $\m = I:_AJ$ and $I=(I \cap (I^2:_AJ)) + \m J$. 
Observe that $I^2:_AJ \subseteq  I$, because $A/Q$ is an Artinian Gorenstein ring and $I^2=QI$. Hence the equivalence follows.

$(3)$ Suppose that $d=4$. Arguing as in $(2)$, the ring $\calR$ is nearly Gorenstein if and only if $\m=I\cap (I^2:_AJ)$ and $It = (I^2 \cap (I^3:_AJ))t + (I \cap (I^2:_AJ))t$. The second condition is equivalent to $I=I\cap (I^2:_A J)$. Combined with $\m=I\cap (I^2:_A J)$, this yields $I=\m$, and vice versa. Hence $\calR$ is nearly Gorenstein but not Gorenstein if and only if $I=\m$. %This completes the proof. 
\end{proof}

We now focus in particular on the case where $d=2$. By Fact \ref{2.5}, we observe that $\calR$ is Cohen-Macaulay. Note that $\rmK_\calR(1) \cong J \calR$ (\cite[Proposition 2.1]{GMTY2}, \cite[Theorem 2.7 (a)]{U}). 
Let $L=\calR:_F J\calR$. Then $L=\calR:_{S^{-1}\calR} J\calR$; see Remark \ref{2.6}.  In particular, $L$ is a graded $\calR$-submodule of $S^{-1}\calR$.

\begin{lem}
For each $n \in \Bbb Z$,  the equality $L_n =  (I^n:_AJ)t^n$ holds in $A[t, t^{-1}]$. 
\end{lem}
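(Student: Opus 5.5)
Since $L$ is a graded $\calR$-submodule of $S^{-1}\calR = K[t,t^{-1}]$, I will write $L=\sum_{n\in\Bbb Z}L_nt^n$ with $L_n\subseteq K$. The plan is to determine $L_n$ degree by degree by testing against the homogeneous generators of $J\calR$, exactly as in the proof of Lemma \ref{2.12}. Recall that $J\calR=\bigoplus_{m\ge 0} JI^m t^m$ is generated as an $\calR$-module by $J = Jt^0$ in degree $0$. Hence a homogeneous element $\alpha = ct^n$ with $c\in K$ lies in $L$ if and only if $ct^n\cdot J\subseteq \calR$, because the remaining components $ct^n JI^mt^m$ are then automatically contained in $\calR\cdot\calR=\calR$. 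Since $ct^nJ$ sits in degree $n$, this condition says precisely that $cJ\subseteq \calR_n$. Here $\calR_n=I^n$ for $n\ge 0$ and $\calR_n=(0)$ for $n<0$.

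Thus $L_n = I^n:_K J$, where $I^n=(0)$ for $n<0$. It remains to see that this agrees with $(I^n:_AJ)t^n$ as written, that is, that the colon may be taken in $A$ rather than in $K$. For this I use that $J=Q:_AI\supseteq Q$ contains a non-zerodivisor; indeed $Q$ is a parameter ideal of the Cohen-Macaulay ring $A$, so it is $\m$-primary and has positive grade. If $cJ\subseteq I^n\subseteq A$ for some $c\in K$, then in particular $cJ\subseteq A$. I claim this forces $c\in A$. Since $J\supseteq Q$, we get $cQ\subseteq A$, so $c\in A:_K Q$. Because $\grade Q = d\ge 2$, the standard fact gives $A:_KQ=A$; concretely, $\Ext^1_A(A/Q,A)=0$ yields $\Hom_A(Q,A)=A$. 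Therefore $I^n:_KJ=I^n:_AJ$ for $n\ge 0$.

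For $n<0$ the condition $cJ=0$ with $J$ containing a non-zerodivisor forces $c=0$, so $L_n=(0)=(0:_AJ)t^n$, consistent with the convention $I^n=0$. (If the paper's convention were instead $I^n=A$ for $n\le 0$, a separate check would be needed there, and I would flag it.) The only step that is not bookkeeping is the reduction $L\subseteq A[t,t^{-1}]$ via $A:_KQ=A$, which uses $d\ge 2$ and depth. Everything else is the grading argument already used in Lemma \ref{2.12}.
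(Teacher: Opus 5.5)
Your proof is correct and follows the paper's argument. Like the paper, you test a homogeneous $ct^n$ against $J\calR$ to get $cJ\subseteq \calR_n$, and then use an ideal of grade at least two to replace $I^n:_KJ$ by $I^n:_AJ$ (you use $Q\subseteq J$, while the paper applies its Remark to $J$ itself). Your caution about negative degrees is well placed: the statement must be read with $\calR_n=(0)$, i.e.\ $L_n=(0)$, for $n<0$, and this is how $L$ is used in the proof of Proposition \ref{3.5}.
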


\begin{proof}
Let $x \in K[t, t^{-1}]$ be a homogeneous element of degree $n$ and write $x = ct^n$ with $c \in K$. Then $x \in L_n$ if and only if $(ct^n)(JI^{\ell}t^{\ell}) \subseteq I^{n+\ell}t^{n+\ell}$ for all $\ell \ge 0$. The latter condition is equivalent to saying that $c  J \subseteq I^n$, that is, $c \in I^n:_K J = I^n:_A J$. Hence $L_n =  (I^n:_AJ)t^n$, as claimed. 
\end{proof}

\begin{rem}
Let $A$ be a Noetherian ring and $J$ an ideal of $A$ with $\grade_AJ \ge 2$. Then $A:_{\rmQ(A)}J=A$ (see e.g., \cite[Exercise 1.2.24]{BH}, \cite[Remark 1.2]{HHS}). %where $\rmQ(A)$ is the total ring of fractions of $A$. 
Hence, for any ideal $\fka$ in $A$, it follows that $\fka :_{\rmQ(A)} J = \fka:_AJ$.
\end{rem}

Hence we have the following. 

\begin{prop}\label{3.5}
Let $(A, \m)$ be a two-dimensional Gorenstein local ring  and $I$ an $\m$-primary ideal of $A$. Assume that $I$ contains a minimal reduction $Q$  such that $\overline{Q} \ne Q$. Then the following conditions are equivalent. 
\begin{itemize}
\item[$(1)$] The Rees algebra $\calR(I)$ is nearly Gorenstein but not Gorenstein.
\item[$(2)$]  $I=Q:_A\m$ and $I=\m (I:_A\m)$. 
\end{itemize}
\end{prop}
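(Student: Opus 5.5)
The plan is to reduce both directions to the two-dimensional machinery set up just before the statement, namely $\rmK_\calR(1)\cong J\calR$ with $J=Q:_AI$, and the formula $L_n=(I^n:_AJ)t^n$ from the preceding lemma. First I make sure we are in the Setup, i.e.\ that $I^2=QI$.

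\emph{Getting $I^2=QI$.}
\begin{itemize}
\item For $(1)\Rightarrow(2)$: $\calR$ is Cohen--Macaulay, so Remark \ref{2.8} (with $Q$ a minimal reduction) gives $I^2=QI$.
\item For $(2)\Rightarrow(1)$: here $I=Q:_A\m$. I would invoke the known result on socle ideals of parameter ideals (Corso--Polini, Goto): if $Q$ is a parameter ideal with $\overline{Q}\ne Q$, then $I=Q:_A\m$ satisfies $I^2=QI$. In particular $Q$ is a reduction of $I$. This is exactly where the hypothesis $\overline{Q}\ne Q$ enters.
\end{itemize}
I expect this step to be the main obstacle, since it relies on an external result rather than on anything proved so far. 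If needed, I would sketch it as follows: $Q:\m\subseteq\overline{Q}$ because $\overline{Q}\supsetneq Q$ and $A/Q$ is Gorenstein with simple socle; then $I^2=QI$ follows from the standard argument for socle ideals.

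\emph{Computing the trace.} Since $\rmK_\calR(1)\cong J\calR$ is a fractional ideal, Lemma \ref{2.2} gives $\tr_\calR(\rmK_\calR)=J\calR\cdot L$. Its degree $n$ component is
$$
[\tr_\calR(\rmK_\calR)]_n=\sum_{i\ge 0} JI^i\,(I^{n-i}:_AJ)\,t^n .
$$
\begin{itemize}
\item In degree $0$: for $i\ge1$ we have $I^{-i}:_AJ=A:_AJ=A$, so every term lies in $J$, and the $i=0$ term is $J\cdot(A:_AJ)=J$. Hence $[\tr]_0=J$.
\item In degree $1$: the terms are $J(I:_AJ)$ for $i=0$, then $JI$ for $i=1$, and terms inside $JI$ for $i\ge2$. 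Hence $[\tr]_1=\bigl(J(I:_AJ)+JI\bigr)t$.
\end{itemize}
Since $\fkM=(\m,It)\calR$ is generated in degrees $0$ and $1$ and the trace is an ideal, $\calR$ is nearly Gorenstein if and only if $\m\subseteq J$ and $I\subseteq J(I:_AJ)+JI$.

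\emph{Translating the conditions.} By Remark \ref{2.8}, $\calR$ is not Gorenstein exactly when $I\ne Q$. As $A/Q$ is Artinian Gorenstein, this is equivalent to $J\ne A$. So "nearly Gorenstein but not Gorenstein" amounts to $J=\m$ together with the degree-one inclusion.

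By Gorenstein duality in $A/Q$ we have $Q:_A(Q:_AI)=I$, so $J=Q:_AI=\m$ is equivalent to $I=Q:_A\m$.

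With $J=\m$, the degree-one condition reads $I\subseteq \m(I:_A\m)+\m I=\m(I:_A\m)$, using $I\subseteq I:_A\m$. The reverse inclusion $\m(I:_A\m)\subseteq I$ is automatic, so the condition is $I=\m(I:_A\m)$.

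These equivalences run in both directions once $I^2=QI$ is in place, which gives $(1)\Leftrightarrow(2)$.
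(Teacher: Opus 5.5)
Your proposal is correct and is essentially the paper's own proof. You reduce to $I^2=QI$, read off $[\tr_{\calR}(\rmK_{\calR})]_0=J$ and $[\tr_{\calR}(\rmK_{\calR})]_1=(J(I:_AJ)+JI)t$ from $J\calR\cdot L$, and use duality in $A/Q$ to turn $J=\m$ into $I=Q:_A\m$. One small inaccuracy: the terms you assign to negative degrees of $L$ are actually zero, since $\calR$ has no negative components and $J$ contains a non-zerodivisor; as you note they lie in $J$ anyway, so the conclusion is unaffected.

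For $(2)\Rightarrow(1)$, note that $A$ may be regular here. The result you need, $(Q:_A\m)^2=Q(Q:_A\m)$ whenever $\overline{Q}\ne Q$, is Theorem 2.1 of Corso--Huneke--Vasconcelos, which is what the paper cites. The Corso--Polini theorem is stated for non-regular rings, so it does not cover this case.
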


\begin{proof}
By Fact \ref{2.5}, Lemma \ref{2.7}, and \cite[Theorem 2.1]{CHV}, we may assume $I^2=QI$. Furthermore, we may (and do) assume that $I\ne Q$; see Remark \ref{2.8}. 
Since $\fkM = (\m, It)\calR$ and $\tr_{\calR}(\rmK_\calR) = J\calR \cdot L$, the 
Rees algebra $\calR$ is nearly Gorenstein if and only if 
$$
\m \subseteq J  (A:_AJ) \ \ \text{and} \ \ It \subseteq (JIt)(A:_AJ) + J(I:_AJ)t.
$$
Equivalently, this is the case precisely when $\m \subseteq J$ and $I \subseteq J(I:_AJ)$. Consequently, because $I\neq Q$, we see that $\calR$ is nearly Gorenstein if and only if
$$
\m=J \ \ \text{and} \ \ I=\m(I:_A\m).
$$
Moreover, since $A/Q$ is Gorenstein, the equality $\m=J$ is equivalent to $I=Q:_A\m$.
%Hence, the equivalent condition follows from Proposition \ref{3.4}. 
\end{proof}

%%%%%%%%%%%%%%%%%%%%%%%%%%%%%%%%%%%%%%%%%%%%%%%%%%%%%%%%%%%%%%%%%%%%%%%%%%%%%%%%%%%%%%%%

Recall that  a Cohen-Macaulay local ring $(A, \m)$ is said to have {\it minimal multiplicity} if $\rme(A) = \mu_A(\m) - \dim A + 1$, where $\rme(A)$ denotes the multiplicity of $A$. If the residue field $A/\m$ is infinite, then $A$ has minimal multiplicity if and only if $\m^2=Q\m$ for some parameter ideal $Q$ of $A$ (\cite[Theorem 1]{S1}). Whenever $\calR(I)$ is Cohen-Macaulay, we say that $\calR(I)$ has {\it minimal multiplicity} if the local ring $\calR(I)_{\fkM}$ has minimal multiplicity. 

For each ideal $I$ of $A$, let 
$$
\rmo(I) = \max\{n \in \Bbb Z \mid I \subseteq \m^n\}
$$
denote the {\it order} of $I$. 
Let $(A, \m)$ be a two-dimensional regular local ring and $I$ an $\m$-primary ideal of $A$. Following \cite[Appendix 5, page 368]{ZS}, we say that $I$ is {\it contracted} if there exists $x \in \m \setminus \m^2$ such that 
$$
I\cdot A\left[\frac{\m}{x}\right] \cap A = I
$$
where $\frac{\m}{x} = \{\frac{a}{x} \mid a \in \m\}$. When $A/\m$ is infinite,  $I$ is contracted if and only if $\mu_A(I) = \rmo(I) + 1$ (\cite[Theorem 2.1]{Huneke-Sally}). Moreover, by \cite[Appendix 5, Lemma 2]{ZS}, the ideal $I$ being contracted is equivalent to the existence of $x \in \m \setminus \m^2$ satisfying $I:_A \m = I:_A x$.

\begin{lem}\label{3.1}
Let $(A, \m)$ be a two-dimensional regular local ring with infinite residue class field. Let $I$ be an $\m$-primary ideal of $A$ and assume that $I$ is not a parameter ideal. 
 Then the following conditions are equivalent. 
\begin{itemize}
\item[$(1)$] $I$ is integrally closed and $\calR(I)$ is a nearly Gorenstein ring.  
\item[$(2)$] $\calR(I)$ is nearly Gorenstein with minimal multiplicity. 
\item[$(3)$] There exists a regular system of parameters $x, y$ of $A$ and an integer $n \ge 1$ such that $I=(x^2, xy, y^{n+1})$. 
\end{itemize}
When this is the case, $\calR(I)$ is an almost Gorenstein graded ring  with $\rmr(\calR(I))=2$. 
\end{lem}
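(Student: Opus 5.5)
Throughout, we use Proposition \ref{3.5}: since $A$ is regular of dimension two, every minimal reduction $Q$ of a non-parameter $\m$-primary ideal satisfies $\overline{Q}\ne Q$. Indeed, parameter ideals in a regular local ring of dimension two are integrally closed only if $\mu(Q)=\mathrm{o}(Q)+1$, i.e.\ $Q=\m$. The case $Q=\m$ forces $I=\m$, which is a parameter ideal and is excluded. We also use the facts recalled before the lemma: for integrally closed $I$, the ring $\calR(I)$ is normal with minimal multiplicity (Huneke--Sally, Verma), and $I^2=QI$.

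\emph{$(3)\Rightarrow(1),(2)$.} Set $I=(x^2,xy,y^{n+1})$. This ideal is monomial and integrally closed: its Newton polygon has vertices $(2,0),(1,1),(0,n+1)$, and every lattice point above the polygon lies in $I$. Hence $\calR(I)$ has minimal multiplicity. Take $Q=(x^2,\,y^{n+1}+xy)$ (or a general minimal reduction). We check $I^2=QI$, then compute $Q:_A\m$. Since $A/Q$ is Gorenstein of length $\mathrm{e}(I)=2(n+1)$, the socle is one-dimensional, and it is natural to expect $Q:\m=Q+(xy)=I$. The cleanest way to verify this is to note that $\ell(I/Q)=\mathrm{e}(I)-\ell(A/I)=2(n+1)-(2n+1)=1$, using $\ell(A/I)=2n+1$. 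Then $I/Q$ is the socle, so $I\subseteq Q:\m$, and equality holds because the socle of $A/Q$ has length one. Next we compute $I:\m=(x^2,xy,y^{n+1}):\m=(x^2,xy,y^{n})+(x)=(x,y^n)$. This gives $\m(I:\m)=(x^2,xy,xy^n,y^{n+1})=I$. By Proposition \ref{3.5}, $\calR(I)$ is nearly Gorenstein and, since $I\ne Q$, not Gorenstein.

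\emph{$(1)\Rightarrow(2)$} is immediate from the normality and minimal multiplicity results above.

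\emph{$(2)\Rightarrow(3)$.} This is the main step. Minimal multiplicity of $\calR(I)$ gives $I^2=QI$, so $\calR(I)$ is Cohen--Macaulay by Remark \ref{2.8}. By Proposition \ref{3.5}, we get $I=Q:\m$ and $I=\m(I:\m)$. The first condition gives $\ell(I/Q)=1$, hence $\mathrm{e}(I)=\ell(A/I)+1$. The second gives $\mu(I)\le \mu(\m)\mu(I:\m)$, but the more useful consequence is structural. Writing $I=\m K$ with $K=I:\m$, we have $\mathrm{o}(I)=\mathrm{o}(K)+1$. The plan is to show $\mathrm{o}(I)=2$. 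Since $I\subseteq Q:\m$ and $I\ne Q$, the socle representative has order at most $\mathrm{o}(Q)+1$ roughly. For an alternative route, Theorem \ref{2.10}(2) gives $\rma(G(I))=1-2=-1$ from Lemma \ref{2.7}, so we must argue directly. The approach I would try first is to use minimal multiplicity of $\calR(I)$ in the form $\mathrm{e}(\calR_\fkM)=\mu(\fkM)-2$, with $\mu(\fkM)=2+\mu(I)$, and to compare it with a formula $\mathrm{e}(\calR_\fkM)$ in terms of $\mathrm{e}(I)$ and $\mathrm{o}(I)$ (Verma's formula $\mathrm{e}(\calR(I)_\fkM)=1+\mathrm{e}(I)/\ldots$, or $\mu(I)=\mathrm{o}(I)+1$ for contracted ideals). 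Combined with $\mathrm{e}(I)=\ell(A/I)+1$, this should force $\mathrm{o}(I)=2$ and $\mu(I)=3$. Then $K=I:\m$ has order $1$ and $\ell(K/I)=1$ type constraints, so $K=(x,y^n)$ for a suitable regular system of parameters, which gives $I=\m K=(x^2,xy,y^{n+1})$. Pinning down $\mathrm{o}(I)=2$ from these length and multiplicity identities is the obstacle I expect.

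\emph{Final assertion.} Nearly Gorenstein together with minimal multiplicity gives almost Gorenstein by \cite[Theorem 6.6]{HHS}. For the type, $\rmr(\calR)=\mu(\rmK_\calR)=\mu(J\calR)=\mu(\m)=2$.
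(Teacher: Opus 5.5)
\textbf{The implication $(2)\Rightarrow(3)$ is not proved.} You never establish $\rmo(I)=2$, as you yourself note. The missing step is already within reach:

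You recorded $\ell_A(I/Q)=1$. This means $I=Q+fA$ for a single element $f$, so $\mu_A(I)\le 3$. Since $I$ is not a parameter ideal, $\mu_A(I)=3$.

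The second input is that minimal multiplicity of $\calR(I)$ forces $I$ to be contracted \cite[Theorem, page 3]{Verma}, so $\mu_A(I)=\rmo(I)+1$. Your comparison of $\rme(\calR(I)_\fkM)=\mu(\fkM)-2=\mu_A(I)$ with Verma's formula $\rme(\calR(I)_\fkM)=1+\rmo(I)$ would give exactly this.

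Therefore $\rmo(I)=2$. Then $I=\m(I:_A\m)$ gives $\rmo(I:_A\m)=1$. An $\m$-primary ideal of order one has the form $(x,y^n)$ for some regular system of parameters $x,y$: it contains some $x\in\m\setminus\m^2$, and its image in the discrete valuation ring $A/xA$ is generated by a power of the image of $y$. This is precisely the paper's argument. Your side remark that $\ell_A((I:_A\m)/I)=1$ is false: this length is the type of $A/I$, namely $\mu_A(I)-1=2$.

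\textbf{There are also errors elsewhere.}

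In $(3)\Rightarrow(1)$, your $Q=(x^2,y^{n+1}+xy)$ is not a reduction of $I$ once $n\ge 2$. Indeed $\ell_A(A/Q)=2(n+1)$, whereas $\rme(I)=n+3$. Also $\ell_A(A/I)=n+2$, since $\ell_A(A/(x,y^{n+1}))=n+1$ and $(x,y^{n+1})/I\cong A/\m$; it is not $2n+1$. Your two incorrect values just happen to differ by $1$. Take instead $Q=(xy,x^2+y^{n+1})$, as the paper does. One checks directly that $I^2=QI$ and $\m I\subseteq Q$. Hence $I\subseteq Q:_A\m$, with equality because $I\ne Q$ and $(Q:_A\m)/Q$ is simple.

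Your Newton polygon argument for $\overline{I}=I$ is only justified where monomial-ideal techniques apply, e.g.\ $A=k[[x,y]]$. For an arbitrary two-dimensional regular local ring, write $I=\m\cdot(x,y^n)$ and use that products of integrally closed ideals are integrally closed \cite[Appendix 5, Theorem 2']{ZS}.

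For the final assertion, \cite[Theorem 6.6]{HHS} only shows that the local ring $\calR(I)_\fkM$ is almost Gorenstein. The lemma claims the stronger statement that $\calR(I)$ is almost Gorenstein as a graded ring; this is the point of the remark after the lemma. The paper takes it from \cite[Theorem 1.4]{GMTY3}. Your computation $\rmr(\calR(I))=\mu_A(J)=\mu_A(\m)=2$, from $\rmK_{\calR}(1)\cong J\calR$ and $J=Q:_AI=\m$, is fine.

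Finally, your opening justification of $\overline{Q}\ne Q$ is garbled: the integrally closed parameter ideals are all the ideals $(x,y^n)$, not just $\m$. The correct reason is simply $Q\subsetneq I\subseteq\overline{Q}$.
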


\begin{proof}
$(1) \Rightarrow (2)$ This follows from \cite[Theorem 3.2]{Huneke-Sally}. %; see also \cite[Theorem, page 3]{Verma}.

$(2) \Rightarrow (3)$ Let $Q$ be a minimal reduction of $I$. By Lemma \ref{2.7}, the equality $I^2=QI$ holds. Since $\calR$ is nearly Gorenstein, we then have $I=Q:_A\m$ and $I=\m(I:_A\m)$. In particular, $\mu_A(I) = 3$ because $A/Q$ is Gorenstein.  Note that, by \cite[Theorem, page 3]{Verma} the ideal $I$ is contracted, so that $\mu_A(I) = \rmo(I) + 1$. Hence $\rmo(I) = 2$. Moreover, because $I=\m(I:_A\m)$, it follows that $\rmo(I:_A \m) = 1$. Therefore we can choose a regular system of parameters $x, y$ of $A$ and an integer $n \ge 1$ such that $I:_A \m=(x, y^n)$. Consequently, $I = \m (I:_A \m) =(x^2, xy, y^{n+1})$. 

$(3) \Rightarrow (1)$ We choose a regular system of parameters $x, y$ of $A$ and an integer $n \ge 1$ satisfying that $I=(x^2, xy, y^{n+1})$. Since $I = \m(x, y^n)$, we have $I=\m (I:_A\m)$. By setting $Q=(xy, x^2+y^{n+1})$, the direct computation shows $I^2=QI$ and $\m I \subseteq Q$. Hence $I=Q:_A\m$, so that $\calR$ is nearly Gorenstein. In addition, since the products of integrally closed ideals is integrally closed (\cite[Appendix 5, Theorem 2']{ZS}), the ideal $I$ is integrally closed. The last statement follows from \cite[Theorem 1.4]{GMTY3}. 
\end{proof}

\begin{rem}
By \cite[Theorem 6.6]{HHS}, the local ring $\calR(I)_{\fkM}$, being nearly Gorenstein with minimal multiplicity, is almost Gorenstein as a local ring. Here, however, we state more: namely, that $\calR(I)$ is almost Gorenstein as a graded ring.
\end{rem}

The first main result of this paper is stated as follows. 

\begin{thm}\label{3.3}
Let $(A, \m)$ be a two-dimensional regular local ring with infinite residue class field. Let $I$ be an $\m$-primary ideal of $A$ and assume that $I$ is not a parameter ideal. Then the following conditions are equivalent. 
\begin{itemize}
\item[$(1)$]  The Rees algebra $\calR(I)$ is nearly Gorenstein.  
\item[$(2)$] There exists a regular system of parameters $x, y$ of $A$ and an integer $n \ge 1$ such that $I=(x^2, xy, y^{n+1})$. 
\end{itemize}
When this is the case, $\calR(I)$ is an almost Gorenstein graded normal domain with minimal multiplicity. 
\end{thm}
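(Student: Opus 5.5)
The implication $(2)\Rightarrow(1)$ is exactly $(3)\Rightarrow(1)$ of Lemma \ref{3.1}, so the work is in $(1)\Rightarrow(2)$. The plan is to first use Theorem \ref{2.10} and Proposition \ref{3.5} to pin down the shape of $I$ relative to a minimal reduction. Then a short order/minimal-generator count in the two-dimensional regular local ring gives the normal form, with no integral-closedness hypothesis needed.

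\emph{Step 1: reduction number one.} Assume $\calR=\calR(I)$ is nearly Gorenstein. By Fact \ref{2.5}, $G(I)$ is Cohen-Macaulay with $\rma(G(I))<0$. Since the residue field is infinite, pick a minimal reduction $Q$ of $I$, which is a parameter ideal. Lemma \ref{2.7} gives $\red_Q(I)=\rma(G(I))+2\le 1$, hence $I^2=QI$. As $I$ is not a parameter ideal, $I\ne Q$. Since $Q$ is a reduction of $I$ we have $I\subseteq\overline{Q}$, so $\overline{Q}\ne Q$ and Proposition \ref{3.5} applies. Moreover $\calR$ is not Gorenstein by Remark \ref{2.8}. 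We conclude
$$I=Q:_A\m \quad\text{and}\quad I=\m(I:_A\m).$$

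\emph{Step 2: counting generators.} Because $A/Q$ is Artinian Gorenstein, $I/Q=(Q:_A\m)/Q$ is its one-dimensional socle, so $\mu_A(I)\le 3$. Since $I$ is not a parameter ideal, $\mu_A(I)=3$. Put $K=I:_A\m$. Then $K\ne A$: otherwise $I=\m$, which is generated by two elements and so is a parameter ideal. Hence $K\subseteq\m$ and $I=\m K\subseteq\m^2$, giving $\rmo(I)\ge 2$. On the other hand, in a two-dimensional regular local ring every $\m$-primary ideal satisfies $\mu_A(I)\le\rmo(I)+1$, so $\rmo(I)\le 2$. Therefore $\rmo(I)=2$, and since $\rmo(\m K)=\rmo(K)+1$, we get $\rmo(K)=1$.

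\emph{Step 3: normal form of $K$.} Since $\rmo(K)=1$, choose $x\in K\setminus\m^2$ and extend it to a regular system of parameters $x,y$. The ring $A/(x)$ is a DVR with uniformizer the image of $y$. The ideal $K/(x)$ is $\m$-primary and proper, so $K/(x)=(y^n)$ for some $n\ge1$. Hence $K=(x,y^n)$ and
$$I=\m K=(x^2,xy,y^{n+1}).$$
The only slightly delicate point is the inequality $\mu_A(I)\le\rmo(I)+1$, which I would cite as standard (it is the equality case of \cite[Theorem 2.1]{Huneke-Sally} for contracted ideals, and holds in general by the Hilbert–Burch argument).

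\emph{Step 4: the final assertions.} Once $I=(x^2,xy,y^{n+1})$, Lemma \ref{3.1} shows that $I$ is integrally closed and that $\calR(I)$ is almost Gorenstein with minimal multiplicity. Normality of the domain $\calR(I)$ then follows from Zariski's theorem for integrally closed ideals in two-dimensional regular local rings, as recalled in the introduction.
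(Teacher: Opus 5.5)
There is a genuine gap in Step 2, at the upper bound $\rmo(I)\le 2$. The inequality $\mu_A(I)\le\rmo(I)+1$ is correct, since Hilbert--Burch puts $I$ inside $\m^{\mu_A(I)-1}$. But with $\mu_A(I)=3$ it reads $3\le\rmo(I)+1$, i.e.\ $\rmo(I)\ge 2$. That is the same lower bound you already had from $I=\m K\subseteq\m^2$, not an upper bound.

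The reverse inequality $\rmo(I)\le\mu_A(I)-1$ is exactly the statement that $I$ is contracted. The paper obtains it either from integral closedness or, in Lemma \ref{3.1}, from minimal multiplicity via Verma's theorem. Neither is available a priori under hypothesis (1), and removing that hypothesis is the whole point of Theorem \ref{3.3} over Lemma \ref{3.1}. The generator count alone says nothing about the order. In $k[[X,Y]]$, the ideal $I=(X^3,X^2Y^2,Y^3)=Q:_A\m$ with $Q=(X^3,Y^3)$ has $I^2=QI$, $\mu_A(I)=3$ and $\rmo(I)=3$. So as written, nothing excludes $\rmo(K)\ge 2$, and Steps 3--4 depend on excluding it.

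The paper closes this gap differently. It writes the three generators of $I=\m(I:_A\m)$ in the form $(xf_1,xf_2,yf_1)$ or $(xf_1,xf_2,yf_3)$. The first case forces $I=(x^2,xy,y^{n+1})$, and the second is excluded by the descent in Claim \ref{claim1}. This shows $I$ is integrally closed, after which Lemma \ref{3.1} applies.

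Your framework can be repaired by using $I=\m K$ more fully. Since $\m K=I$, you have $\mu_A(K)=\ell_A(K/I)=\ell_A((I:_A\m)/I)$. This is the Cohen--Macaulay type of $A/I$, which equals $\mu_A(I)-1=2$ by Hilbert--Burch. Hence $K=(a,b)$ is a parameter ideal. Now suppose $xa,ya,xb,yb$ do not minimally generate $\m K$. Then some relation $\alpha xa+\beta ya+\gamma xb+\delta yb\in\m^2K$ has a unit coefficient, which gives $(\alpha x+\beta y-m)a=-(\gamma x+\delta y-m')b$ with $m,m'\in\m^2$. Since $a,b$ is a regular sequence, $b$ divides the first factor and $a$ divides the second, and at least one of these factors has order $1$. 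So $a$ or $b$ lies outside $\m^2$. Because $\mu_A(\m K)=\mu_A(I)=3<4$, such a relation must exist, so $\rmo(K)=1$ and your Step 3 goes through. This would give a proof much shorter than the paper's, avoiding both contractedness and the case analysis, but the argument you wrote does not contain it.
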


\begin{proof}
We write $\m = (x, y)$. 
By Lemma \ref{3.1}, it suffices to show that, when $\calR$ is nearly Gorenstein, the ideal $I$ is integrally closed. To show this, we choose a minimal reduction $Q$ of $I$. Note that $I^2=QI$ and $I \ne Q$. By Proposition \ref{3.5}, we have $I=Q:_A\m$ and $I=\m(I:_A\m)$. Thus $\mu_A(I) = 3$. We write $I:_A\m = (f_1, f_2, \ldots, f_{\ell})$ with $\ell \ge 1$ and $f_i \in A$. Without loss of generality, we may assume that \(I\) is of either of the following forms:
\begin{itemize}
\item[$(\rm i)$] $I = (xf_1, xf_2, yf_1)$, or
\item[$(\rm ii)$] $I = (xf_1, xf_2, yf_3)$. 
\end{itemize}
Assume that $I = (xf_1, xf_2, yf_1)$. We proceed to show that $\overline{I} = I$. Indeed, since $I=(x, y)(f_1, f_2)$, we can write $yf_2 = a_1(xf_1) + a_2(xf_2) + a_3(yf_1)$ for some $a_1, a_2, a_3 \in A$. Then 
$$
y(f_2 - a_3f_1) = x(a_1f_1 + a_2f_2).
$$
Since $x, y$ is a regular sequence on $A$, it follows that $f_2 -a_3f_1 = cx$ with $c \in A$. Thus 
\begin{center}
$I=(x, y)(f_1, cx) = (xf_1, cx^2, yf_1, ycx)$ \ and \ $I=(xf_1, xf_2, yf_1) = (xf_1, cx^2, yf_1)$. 
\end{center}
This shows $ycx = b_1(xf_1)+b_2(cx^2)+b_3(yf_1)$ with $b_1, b_2, b_3 \in A$. Hence
$$
x(cy-b_2cx) = f_1(b_1x+b_3y). 
$$
Note that $x, f_1$ forms a regular sequence on $A$. This implies that $cy-b_2cx = df_1$ for some $d \in A$. 
We now assume that $c \in \m$. We then have $d \in \m$, so that $d = px + qy$ with $p, q \in A$. Therefore, since
$$
y(c-qf_1) = x(b_2c + pf_1)
$$
and the sequence $x, y$ is $A$-regular, $c - qf_1 = ex$ for some $e \in A$. Hence we get $(f_1, f_2) = (f_1, cx) = (f_1, ex^2)$. 
Next, taking $c=ex \in \m$ and repeating the same argument, we obtain that, for every $n \ge 2$, we can choose $e_n \in A$ such that
$$
(f_1, f_2) = (f_1, e_nx^n).
$$
This yields that $(f_1, f_2) \subseteq (f_1) + \bigcap_{n=1}^{\infty}(x^n) =(f_1)$. This makes a contradiction. Hence $c$ is a unit in $A$, so $(f_1, f_2) = (f_1, x)$. 
Since $A/xA$ is a discrete valuation ring with maximal ideal of the form $\m/xA=(\overline{y})$, we see that $(f_1, x)=(y^n, x)$ for some $n \ge 1$. Consequently, we have $I=(x, y)(y^n, x) = (x^2, xy, y^{n+1})$, and hence $I$ is integrally closed. 

Next, assume that $\overline{I} \ne I$. Then, by the argument above, we may write
$$
I=(xf_1,xf_2,yf_3).
$$
To complete the proof, we shall show that this leads to a contradiction. 

\begin{claim}\label{claim1}
For every $n \ge 0$, there exist $f_1^{(n)}, f_2^{(n)} \in A$ such that the equality
$$
I=(x, y)(f_1^{(n)}x^n, f_2^{(n)}x^n, f_3) = (f_1^{(n)}x^{n+1}, f_2^{(n)}x^{n+1}, yf_3)
$$
holds.
\end{claim}

\begin{proof}[Proof of Claim \ref{claim1}]
We prove the assertion by induction on $n \ge 0$. When $n=0$, it suffices to take $f_1^{(0)}=f_1$ and $f_2^{(0)}=f_2$. 
For the inductive step, assume that the assertion holds for $n=k~ (k\ge 0)$, and prove that it holds for $n=k+1$. Namely, we assume that $I=(x, y)(f_1^{(k)}x^k, f_2^{(k)}x^k, f_3)$ and
\begin{eqnarray*}
y f_1^{(k)}x^k \!\!&=&\!\! a_1^{(k)}f_1^{(k)}x^{k+1} + a_2^{(k)}f_2^{(k)}x^{k+1} + a_3^{(k)}yf_3 \\
y f_2^{(k)}x^k \!\!&=&\!\! b_1^{(k)}f_1^{(k)}x^{k+1} + b_2^{(k)}f_2^{(k)}x^{k+1} + b_3^{(k)}yf_3 \\
xf_3 \!\!&=&\!\! c_1^{(k)}f_1^{(k)}x^{k+1} + c_2^{(k)}f_2^{(k)}x^{k+1} + c_3^{(k)}yf_3 
\end{eqnarray*}
with $f_i^{(k)}, a_j^{(k)}, b_j^{(k)}, c_j^{(k)} \in A$. If $a_3^{(k)}$ is a unit in $A$, the first equality above shows $yf_3 \in (f_1^{(k)}x^{k+1}, f_2^{(k)}x^{k+1}, y f_1^{(k)}x^k)$. The induction hypothesis guarantees that
$$
I=(f_1^{(k)}x^{k+1}, f_2^{(k)}x^{k+1}, yf_3) = (x \cdot f_1^{(k)}x^{k}, x \cdot f_2^{(k)}x^{k}, y \cdot f_1^{(k)}x^{k}).
$$ 
This falls under case $(i)$, and hence $I$ is integrally closed, contradicting our assumption that $\overline{I} \ne I$. Therefore, $a_3^{(k)} \in \m$. Similarly, the second equality above yields $b_3^{(k)} \in \m$.
By the first equality above, we obtain $y(f_1^{(k)}x^k - a_3^{(k)}f_3) = x^{k+1}(a_1^{(k)}f_1^{(k)}+a_2^{(k)}f_2^{(k)})$, so that
$$
f_1^{(k)} x^k=a_3^{(k)}f_3 + f_1^{(k+1)}x^{k+1} \quad \quad (*)
$$
for some $f_1^{(k+1)} \in A$, because $x^{k+1}, y$ forms a regular sequence on $A$. Similarly, the second equality above induces the equality 
$$
f_2^{(k)} x^k=b_3^{(k)}f_3 + f_2^{(k+1)}x^{k+1} \quad \quad (**)
$$
holds for some $f_2^{(k+1)} \in A$. Therefore, we get
$$
I=(x, y)(f_1^{(k)}x^k, f_2^{(k)}x^k, f_3) = (x, y)(f_1^{(k+1)}x^{k+1}, f_2^{(k+1)}x^{k+1}, f_3).
$$
Substituting $(*)$ and $(**)$ into the third equality above, we obtain 
$$
xf_3 = c_1^{(k)}x (a_3^{(k)}f_3 + f_1^{(k+1)}x^{k+1}) + c_2^{(k)}x (b_3^{(k)}f_3 + f_2^{(k+1)}x^{k+1}) + c_3^{(k)}yf_3
$$
which shows 
$$
(1 - a_3^{(k)}c_1^{(k)} - b_3^{(k)}c_2^{(k)})(xf_3) = c_1^{(k)}f_1^{(k+1)}x^{k+2} + c_2^{(k)}f_2^{(k+1)}x^{k+2} + c_3^{(k)}yf_3.
$$
Since $a_3^{(k)}, b_3^{(k)} \in \m$, it follows that $xf_3 \in (f_1^{(k+1)}x^{k+2}, f_2^{(k+1)}x^{k+2}, yf_3)$. We next substitute $(*)$ and $(**)$ into the first and the second equalities respectively above to obtain 
$$
yf_1^{(k)}x^k, yf_2^{(k)}x^k \in (xf_3, f_1^{(k+1)}x^{k+2}, f_2^{(k+1)}x^{k+2},yf_3) =(f_1^{(k+1)}x^{k+2}, f_2^{(k+1)}x^{k+2},yf_3)
$$
which completes the proof. 
\end{proof}

Therefore, for each $n \ge 0$, we have $I=(f_1^{(n)}x^{n+1}, f_2^{(n)}x^{n+1}, yf_3)$ for some $f_1^{(n)}, f_2^{(n)} \in A$. Hence
$$
I \subseteq \bigcap_{n=1}^{\infty}(x^{n+1}, y) \subseteq (y)
$$
which makes a contradiction. 
\end{proof}

\begin{rem}
Let $(A,\m)$ be a two-dimensional regular local ring and $I ~(\ne A)$ an ideal of $A$ with $\height_A I>0$. Then either $I\cong A$ as an $A$-module, or $I\cong J$ for some $\m$-primary ideal $J$ of $A$.
\end{rem}

\begin{ex}\label{3.5a}
Let $A=k[[X, Y]]$ be the formal power series ring over a field $k$. 
We set $I = (X^2, XY^2, Y^3)$ and $Q=(X^2, Y^3)$. Then $Q$ is a parameter ideal of $A$ and $I^2=QI$. Then $I=Q:_A\m$, but $I \not\subseteq\m (I:_A\m) = \m^3$. Thus, $\calR(I)$ is not nearly Gorenstein. 
\end{ex}

\begin{ex}\label{3.7}
Let $A=k[[X, Y]]$ be the formal power series ring over a field $k$, and let $\ell \ge 1$ be an integer. Set $I=\m^{\ell}$, $Q=(X^{\ell}, Y^{\ell})$, and $J=Q:_AI$. Then $I^2=QI$ and $J=\m^{\ell-1}=I:_A\m$. Hence, $\calR(I)$ is nearly Gorenstein if and only if $\ell \le 2$. 
On the other hand, \cite[Corollary 1.4]{GMTY2} shows that, provided $k$ is infinite, the Rees algebra $\calR(I)$ is almost Gorenstein for every $\ell \ge 1$. 
\end{ex}

\begin{ex}
Let $A=k[[X, Y]]$ be the formal power series ring over a field $k$. We set $Q=(X^3, Y^3)$ and $I=Q:_A\m$. Then $I=(X^3, X^2Y^2, Y^3)$, $I^2=QI$, and $\overline{I} = \m^3 \ne I$. Thus $\calR(I)$ is not nearly Gorenstein. 
\end{ex}

We consider the case where $A$ is not regular. 
Recall that a Noetherian local ring $(A, \m)$ is called a {\it hypersurface} if $\mu_A(\m) - \depth A \le 1$ (\cite[page 44]{Avramov}). 

\begin{thm}\label{hyp}
Let $(A, \m)$ be a two-dimensional non-regular Gorenstein local ring with infinite residue class field. Let $I$ be an integrally closed $\m$-primary ideal of $A$. 
Then the following conditions are equivalent. 
\begin{itemize}
\item[$(1)$] The Rees algebra $\calR(I)$ is a non-Gorenstein nearly Gorenstein ring.  
\item[$(2)$] $I=\m$ and $A$ is a hypersurface  with $\rme(A) = 2$. 
\end{itemize}
\end{thm}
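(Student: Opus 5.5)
The plan is to reduce everything to Proposition \ref{3.5} and then pin down $I$ using integral closedness and length counts in $A/Q$.

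\emph{The implication $(2)\Rightarrow(1)$.} Assume $I=\m$ and $A$ is a hypersurface with $\rme(A)=2$. Then $\mu_A(\m)=3=\rme(A)+d-1$, so $A$ has minimal multiplicity. Since $A/\m$ is infinite, \cite[Theorem 1]{S1} gives a parameter ideal $Q$ with $\m^2=Q\m$. Moreover $\ell_A(A/Q)=\rme(A)=2$, so $\ell_A(\m/Q)=1$ and $Q\ne \m=\overline{Q}$. As $A/Q$ is Artinian Gorenstein of length $2$, we get $Q:_A\m=\m$. Also $\m:_A\m=A$, so $\m(\m:_A\m)=\m$. Proposition \ref{3.5} then gives that $\calR(\m)$ is nearly Gorenstein but not Gorenstein.

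\emph{The implication $(1)\Rightarrow(2)$: reduction to Proposition \ref{3.5}.}
1. Since $\calR=\calR(I)$ is Cohen--Macaulay, Fact \ref{2.5} and Lemma \ref{2.7} give $I^2=QI$ for a minimal reduction $Q$ of $I$.
2. Since $\calR$ is not Gorenstein, Remark \ref{2.8} gives $I\ne Q$. Integral closedness of $I$ then gives $\overline{Q}=\overline{I}=I\ne Q$, so Proposition \ref{3.5} applies.
3. Proposition \ref{3.5} yields $I=Q:_A\m$ and $I=\m(I:_A\m)$.
4. Since $A/Q$ is Gorenstein, $Q:_AI=\m$, so $\ell_A(I/Q)=1$.
5. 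Set $K=I:_A\m$. We have $K\subseteq Q:_AI=\m$, because $KI=K\cdot\m(I:_A\m)\subseteq \m K\cdot K$ should land in $Q$. I would verify this via $KI\subseteq K\m\cdot(\ldots)$, or argue directly that $K\ne A$ because $I\ne\m I$.

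\emph{The implication $(1)\Rightarrow(2)$: forcing $I=\m$.} The key step is to show $I=\m$, i.e.\ $K=A$, which means $\m\subseteq I:_A\m$. Here I would use integral closedness in an essential way.

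First I would show $Q\not\subseteq\m^2$. If $Q\subseteq\m^2$, then $I=Q:_A\m$ being integrally closed contradicts Goto's theorem on integral closedness of $Q:_A\m$ in non-regular Gorenstein rings, which says that $Q:_A\m$ integrally closed forces $Q\not\subseteq \m^2$.

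Next, choose $x\in Q\setminus\m^2$ as part of a minimal generating set of $Q$, and pass to $\overline{A}=A/xA$. This is a one-dimensional Gorenstein local ring in which $I\overline{A}=Q\overline{A}:\m\overline{A}$. Moreover $I\overline{A}$ is still integrally closed, using a general element $x$ in the sense of Itoh/Huneke for $d=2$.

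In dimension one, an integrally closed ideal of the form $(y):\m$ with $I^2=yI$ and $\ell(I/(y))=1$ should force $I\overline{A}=\m\overline{A}$, since the integrally closed ideals strictly between $(y)$ and its colon are determined by the valuation data. This gives $I=\m$.

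This descent, together with the appeal to Goto's result, is where I expect the main obstacle.

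\emph{The implication $(1)\Rightarrow(2)$: conclusion.} Once $I=\m$, we have $\m^2=Q\m$, and step 4 gives $\ell_A(\m/Q)=1$. Hence $\rme(A)=\ell_A(A/Q)=2$. Minimal multiplicity then gives $\mu_A(\m)=\rme(A)+d-1=3$, so $\mu_A(\m)-\depth A=1$ and $A$ is a hypersurface.
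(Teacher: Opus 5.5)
\textbf{There is a genuine gap: the central step $I=\m$ is not established.} Your argument for $(2)\Rightarrow(1)$ is fine. So is your closing step once $I=\m$ is known: the count $\ell_A(A/Q)=2$ gives $\rme(A)=2$ and $\mu_A(\m)=3$ directly, slightly more economically than the paper's use of $\rmr(A)=\rme(A)-1$.

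First, your descent is superfluous. If $Q\not\subseteq\m^2$, then $I\not\subseteq\m^2$, and $I=\m(I:_A\m)$ then forces $I:_A\m=A$, i.e.\ $I=\m$. So everything hinges on the quoted ``Goto theorem'' ($Q:_A\m$ integrally closed in a non-regular Gorenstein ring $\Rightarrow Q\not\subseteq\m^2$). You give neither a precise statement nor a proof, and here it can only be obtained through the ingredient described below.

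Second, the descent is not just unverified but wrong: its one-dimensional assertion is false. (Separately, the Itoh--Huneke specialization results concern general elements, not a prescribed $x\in Q\setminus\m^2$.) Take $A=k[[a,b,x]]/(b^2-a^3)$ with $k$ infinite, $Q=(a^2,x)$ and $I=(a^2,ab,x)$. Then:
\begin{itemize}
\item $A$ is a two-dimensional non-regular hypersurface, $I=Q:_A\m$, $I^2=QI$, and $x\in Q\setminus\m^2$.
\item $I$ is integrally closed: in the normalization $k[[t,x]]$, with $a=t^2$, $b=t^3$, one has $I\,k[[t,x]]=(t^4,x)$, an integrally closed monomial ideal whose contraction to $A$ is $I$.
\item In $B=A/xA\cong k[[t^2,t^3]]$ the image of $I$ is $(t^4,t^5)=t^4B:_B\m B$. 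It is integrally closed, of colength one over $t^4B$, has reduction number one, and is even equal to $\m B\,(IB:_B\m B)=\m^2B$.
\end{itemize}
Yet $I\ne\m$. What rules this example out of the theorem is that $I\ne\m(I:_A\m)=\m^2$ in $A$. Your descent never uses this condition, and it does not survive passage to $A/xA$.

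\textbf{The missing idea is $\m$-fullness, as in the paper.} Since $I$ is integrally closed and $\m$-primary and $A/\m$ is infinite, $I$ is $\m$-full by Goto's theorem. Hence $\mu_A(J)\le\mu_A(I)$ for every ideal $J\supseteq I$.
\begin{itemize}
\item From $\ell_A(I/Q)=1$ you get $\mu_A(I)\le 3$.
\item Taking $J=\m$ gives $\mu_A(\m)\le 3$, so $A$ is a hypersurface and $G(\m)\cong k[X,Y,Z]/(F)$ with $\deg F=\rme(A)\ge 2$ (as $A$ is not regular). Hence $\mu_A(\m^2)\ge 5$.
\item If $I\ne\m$, then $I=\m(I:_A\m)\subseteq\m^2$, and $J=\m^2$ gives $3\ge 5$, a contradiction.
\end{itemize}
So $I=\m$, and your final step takes over. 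Your step 5 is also unsupported as written, but it is unnecessary: $I:_A\m\subseteq\m$ just means $I\ne\m$.
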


\begin{proof}
$(2) \Rightarrow (1)$ Since $A$ is a two-dimensional hypersurface with $\rme(A)=2$, it has minimal multiplicity. As the residue field $A/\m$ is infinite, there exists a parameter ideal $Q$ of $A$ such that $\m^2 =Q\m$. In particular, $\m=Q:_A\m$, and hence $I=Q:_A\m$. Moreover, since $I=\m$, we have $I:_A\m=A$, and therefore $I=\m(I:_A\m)$. Consequently, Proposition \ref{3.5} applies and shows that the Rees algebra $\calR$ is nearly Gorenstein. 
Finally, $\calR$ is not Gorenstein because $I=\m\neq Q$; see Remark \ref{2.8}.

$(1) \Rightarrow (2)$ Let $Q$ be a parameter ideal of $A$ which is a reduction of $I$. Since $\calR$ is not Gorenstein, we have $I\neq Q$, whence $\overline{Q} \ne Q$. By Proposition \ref{3.5}, the nearly Gorenstein property of $\calR(I)$ yields that $I = Q:_A\m$ and $I=\m(I:_A \m)$. Assume that $A$ is not a hypersurface. Then $\mu_A(\m)\ge 4$. Since $I$ is integrally closed and $\sqrt{I}=\m$, the ideal $I$ is $\m$-full (\cite[Theorem (2.4)]{G}). Hence we get
$$
\mu_A(I)\ge \mu_A(\m)\ge 4
$$
by \cite[Lemma (2.2) (2)]{G} (see also \cite[Theorem 3]{Watanabe}). 
On the other hand, since $I=Q:_A\m$ and $A/Q$ is Gorenstein, we obtain $\ell_A(I/Q)=1$, and hence $\mu_A(I)=\mu_A(Q)+1=3$, a contradiction. 
Therefore $A$ is a hypersurface.
By \cite[(40.6) Theorem]{N}, we have $\rme(A)\ge 2$. 
Since $A$ is a two-dimensional hypersurface, we may write
$$
G(\m)\cong k[X,Y,Z]/(F),
$$
where $k=A/\m$ and $F\in k[X,Y,Z]$ is a homogeneous polynomial.
Moreover, $\deg F=\rme(A)\ge 2$; see \cite[Exercise 4.6.12(a)]{BH}. 
Consequently
$$
\mu_A(\m^2)=\dim_k G(\m)_2= \dim_k k[X,Y,Z]_2-\dim_k (F)_2\ge 6-1=5.
$$
If $I\neq \m$, then the equality $I=\m(I:_A\m)$ implies $I\subseteq \m^2$. 
Therefore
$$
3=\mu_A(I)\ge \mu_A(\m^2)\ge 5
$$
which is impossible. Hence $I=\m$. 
This implies $\m=Q:_A\m$. 
Since $A$ is not regular, it follows that $\m^2=Q\m$ by \cite[Theorem 2.2]{CP}. 
Thus $A$ has minimal multiplicity, and hence $\rmr(A)=\rme(A)-1$ (\cite[3.1 Proposition]{Sally}). Since $A$ is Gorenstein, we conclude that $\rme(A)=2$, as desired.
\end{proof}

We conclude this section with the following remark concerning a case where the ideal is not necessarily of reduction number one.

\begin{rem}
Let $(A, \m)$ be a regular local ring with $d=\dim A \ge 3$. Then the Rees algebra $\calR(\m^{d-2})$ is nearly Gorenstein, as will be shown in a forthcoming paper (\cite{EY2}). 
Note that, assuming $A/\m$ is infinite, the Rees algebra $\calR(\m^{\ell})$ is Gorenstein if and only if it is an almost Gorenstein graded ring, and this happens precisely when $\ell=d-1$ (\cite[Theorem 4.1(2)]{Ooishi}, \cite[Theorem 1.6]{GMTY4}).
\end{rem}

%%%%%%%%%%%%%%%%%%%%%%%%%%%%%%%%%%%%%%%%%%%%%%%%%%%%%%%%%%%%%%%%%%%%%%%%%%%%%%%%%%%%%%%%%%%%%%%%%%%%%%%%%%%%%%%%%%%%%%%%%%%%%%%%%%%%%%%%%%%%%%%%

\section{Nearly Gorenstein extended Rees algebras}

In this section, we focus on the nearly Gorenstein property of extended Rees algebras.

\begin{setup}
Let $(A, \m)$ be a Gorenstein local ring with $d=\dim A \ge 2$ and $K=\rmQ(A)$ its total ring of fractions. Let $I$ be an $\m$-primary ideal of $A$ and assume that $I$ contains a parameter ideal $Q$ of $A$ such that $I^2=QI$. We set $J=Q:_A I$. Let  
$$
\calR'=\calR'(I) =A[It, t^{-1}] \subseteq A[t, t^{-1}] \ \ \text{and} \ \ G=G(I) = \calR'(I)/t^{-1}\calR'(I) 
$$
be the extended Rees algebra and the associated graded ring of $I$, respectively, where $t$ is an indeterminate over $A$.
Denote by $\fkM = (t^{-1}, \m, It)\calR'$ the graded maximal ideal of $\calR'$.
\end{setup}

Since $I^2=QI$, it follows from \cite[Corollary 2.7]{VV} that $G$ is Cohen-Macaulay; consequently, $\calR'$ is Cohen-Macaulay as well. Moreover, the graded canonical module of $\calR'$ is given by
$\rmK_{\calR'} \cong (t^{d-2}, Jt^{d-1})\calR'$ (\cite[Theorem 2.7 (b)]{U}). 
Therefore, after shifting degrees, we have $\rmK_{\calR'}(d-1)\cong X$, 
where $X=(t^{-1},\, J)\calR'$. Let $F=\rmQ(\calR')$ be the total ring of fractions of $\calR'$, and set $L=\calR':_F X$. Then
$$
L = (\calR':_F t^{-1}\calR') \cap (\calR':_F J\calR') = t\calR' \cap (\calR':_F J\calR') = t\calR' \cap (\calR':_{T^{-1}\calR'} J\calR')
$$
where $T$ denotes the set of homogeneous non-zerodivisors on $\calR'$. 
In particular, $L$ is a graded $\calR'$-submodule of $T^{-1}\calR' = K[t, t^{-1}]$.

\begin{lem}
For each $n \in \Bbb Z$,  the equality $L_n = (I^{n-1} \cap (I^n:_AJ))t^n$ holds in $A[t, t^{-1}]$. 
\end{lem}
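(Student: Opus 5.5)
We compute $L_n$ by intersecting the degree-$n$ components of the two pieces in the decomposition already obtained:
\[
L = t\calR' \cap (\calR':_{T^{-1}\calR'} J\calR').
\]
Both pieces are graded $\calR'$-submodules of $K[t,t^{-1}]$. So it suffices to identify, for each $n\in\Bbb Z$, the degree-$n$ part of each piece as $(\cdot)t^n$ for an $A$-submodule of $K$, and then intersect.

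For the first piece, recall that $\calR'_m = I^m t^m$ for all $m\in\Bbb Z$, with the convention $I^m=A$ for $m\le 0$. Hence
\[
[t\calR']_n = t\cdot \calR'_{n-1} = I^{n-1}t^n .
\]
For $n\le 1$ this is $At^n$, consistent with the convention.

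For the second piece, we argue as in the proof of the corresponding lemma in Section 3. Let $x=ct^n$ with $c\in K$. Then $x\in \calR':_{T^{-1}\calR'}J\calR'$ if and only if
\[
(ct^n)\,J\,I^\ell t^\ell \subseteq I^{n+\ell}t^{n+\ell} \quad \text{for all } \ell\in\Bbb Z.
\]
Here $\calR'$ now includes negative degrees with $\calR'_\ell=At^\ell$ for $\ell\le 0$.

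We will show this family of conditions is equivalent to the single condition $cJ\subseteq I^n$, i.e.\ the case $\ell=0$.

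\emph{Necessity.} Taking $\ell=0$ gives $cJ\subseteq I^n$.

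\emph{Sufficiency, $\ell\ge 0$.} From $cJ\subseteq I^n$ we get $cJI^\ell\subseteq I^{n+\ell}$.

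\emph{Sufficiency, $\ell<0$.} Here we need $cJ\subseteq I^{n+\ell}$. This holds because $I^n\subseteq I^{n+\ell}$ when $\ell<0$, again using $I^m=A$ for $m\le 0$.

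So the degree-$n$ part of the second piece is $(I^n:_KJ)t^n$. By the Remark preceding Proposition \ref{3.5}, we have $I^n:_KJ=I^n:_AJ$, since $\grade_AJ\ge 2$. This holds because $J\supseteq I$ is $\m$-primary and $d\ge 2$. For $n\le 0$ the same Remark gives $I^n:_KJ = A:_KJ = A$.

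Intersecting the two degree-$n$ components yields
\[
L_n=(I^{n-1}\cap(I^n:_AJ))t^n .
\]

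The only point requiring care is bookkeeping in negative degrees. Specifically:
\begin{itemize}
\item the convention $I^m=A$ for $m\le 0$ must be used consistently;
\item $A:_KJ=A$ is the one place where the hypothesis $d\ge 2$ enters.
\end{itemize}

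The rest is a routine componentwise check, so no step is a serious obstacle.
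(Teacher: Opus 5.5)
Your proof is correct and is essentially the paper's argument: you identify $[t\calR']_n=I^{n-1}t^n$, show that the degree-$n$ part of $\calR':J\calR'$ is $(I^n:_KJ)t^n=(I^n:_AJ)t^n$ using $A:_KJ=A$, and then intersect the two components. Your separate verification of the conditions for $\ell\neq 0$ is valid but unnecessary, since $J\calR'$ is generated as an $\calR'$-module by $J$ in degree $0$, so the $\ell=0$ condition alone already characterizes membership.
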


\begin{proof}
Let $x \in K[t, t^{-1}]$ be a homogeneous element of degree $n$. Write $x = ct^n$ with $c \in K$. Then 
$x \in (\calR':_F J\calR')_n$ if and only if $(ct^n)(JI^{\ell}t^{\ell}) \subseteq I^{n+\ell}t^{n+\ell}$ for all $\ell \in \Bbb Z$. The latter is equivalent to $c  J \subseteq I^n$, or equivalently, $c \in I^n:_K J = I^n:_A J$. Consequently,
$(\calR':_F J\calR')_n = (I^n:_A J)t^n$. 
Taking degree $n$ components in the equality $L=t\calR'\cap (\calR':_F J\calR')$, we obtain
$$
L_n = (t\calR')_n \cap (\calR':_F J\calR')_n= I^{n-1}t^n \cap (I^n:_A J)t^n= (I^{n-1}\cap (I^n:_A J))t^n
$$
as required.
\end{proof}

\begin{prop}\label{4.3}
The following conditions are equivalent. 
\begin{itemize}
\item[$(1)$] The extended Rees algebra $\calR'(I)$ is nearly Gorenstein.
\item[$(2)$]  The associated graded ring $G(I)$ is nearly Gorenstein.  
\item[$(3)$]  $\m \subseteq (I:_AJ) + J$ and $I = (I^2:_AJ) + J(I:_AJ)$. 
\end{itemize}
\end{prop}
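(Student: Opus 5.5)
The plan is to compute $\tr_{\calR'}(\rmK_{\calR'})$ degree by degree, exactly as in the Rees algebra case, and then to pass to $G$ via the regular element $t^{-1}$. Lemma \ref{2.2} and the identification $\rmK_{\calR'}(d-1)\cong X=(t^{-1},J)\calR'$ give $\tr_{\calR'}(\rmK_{\calR'})=X\cdot L$. Here $L_n=(I^{n-1}\cap(I^n:_AJ))t^n$ by the preceding lemma, and $X_m=(I^{m+1}+JI^m)t^m$, with the convention $I^k=A$ for $k\le 0$. Hence
$$
[X\cdot L]_n=\sum_{m+k=n}(I^{m+1}+JI^m)\bigl(I^{k-1}\cap(I^k:_AJ)\bigr)t^n .
$$

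\textbf{$(1)\Leftrightarrow(3)$.} The graded maximal ideal satisfies $\fkM_n=I^nt^n$ for $n\ge1$, $\fkM_0=\m$, and $\fkM_n=At^{n}$ for $n<0$. Since $t^{-1}\in\fkM$ and everything is generated over $\calR'$, it suffices to check three things: $t^{-1}\in X\cdot L$, $\m\subseteq[XL]_0$, and $It\subseteq[XL]_1$.

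For degree $-1$, use $t^{-1}\in X$ and $1\in L_0=A\cap(A:_AJ)=A$; this gives $t^{-1}\in XL$ automatically.

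For degree $0$, the contributions are $t^{-1}\cdot L_1$, giving $(I:_AJ)$ since $I^0=A$, and $J\cdot L_0=J$. Terms with $k\le -1$ carry $I^{k}:_AJ\supseteq A$ intersected with $A$, multiplied against $X_{-k}$; these land in $I^{-k+1}+JI^{-k}\subseteq I+J=J$. So $[XL]_0=(I:_AJ)+J$, which yields the first condition of (3).

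For degree $1$, the terms are $t^{-1}L_2=(I\cap(I^2:_AJ))$, then $X_0L_1=(I+J)(I:_AJ)=J(I:_AJ)$, then $X_1L_0=I^2+JI$, and further terms that lie in $I^2+JI$. Two inclusions simplify this. First, $I^2:_AJ\subseteq I$, as in Proposition \ref{2.13}, using that $A/Q$ is Gorenstein and $I^2=QI$. Second, $I^2+JI\subseteq J(I:_AJ)$, since $I\subseteq I:_AJ$ (because $IJ\subseteq Q\subseteq I$) and $I\subseteq J$. Hence $[XL]_1=((I^2:_AJ)+J(I:_AJ))t$, and the containment $It\subseteq[XL]_1$ is equivalent to the second equality in (3), since the reverse inclusion $J(I:_AJ)\subseteq I$ is clear.

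I expect the main bookkeeping obstacle to be verifying that no other index pairs $(m,k)$ contribute anything new. Because $L$ is concentrated in degrees $\ge 0$ with $L_k\subseteq I^{k-1}t^k$, this reduces to the finitely many cases handled above.

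\textbf{$(1)\Leftrightarrow(2)$.} Use that $t^{-1}$ is a homogeneous nonzerodivisor in $\fkM$ with $G=\calR'/t^{-1}\calR'$, and $\rmK_G\cong(\rmK_{\calR'}/t^{-1}\rmK_{\calR'})(-1)$. The general fact $\tr_{R/xR}(\rmK_{R/xR})\supseteq \tr_R(\rmK_R)R/xR$ gives $(1)\Rightarrow(2)$ immediately. For the converse I would cite the known equality of canonical traces modulo a regular element for Cohen--Macaulay rings (e.g.\ the result of Herzog--Hibi--Stamate on passing to $R/xR$, or compute directly). Alternatively, and more safely, I would compute $\tr_G(\rmK_G)$ directly. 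Here $\rmK_G\cong\bigoplus(\omega_{i-1}/\omega_i)$ with $\omega$ read off from $X$, and one shows its degree $0$ and $1$ parts are the images of $(I:_AJ)+J$ and $(I^2:_AJ)+J(I:_AJ)$ modulo $I$ and $I^2$ respectively. This reproduces (3). The delicate point in this route is justifying that the trace of the quotient equals the image of the trace; I would argue this using $\Hom_{\calR'}(X,\calR')\otimes\calR'/t^{-1}\cong\Hom_G(X/t^{-1}X,G)$, which follows from $\Ext^1_{\calR'}(X,\calR')=0$ because $X$ is a shift of the canonical module of a Cohen--Macaulay ring with $\calR'$ generically Gorenstein.
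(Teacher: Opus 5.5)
\textbf{What works.} Your computation for $(1)\Leftrightarrow(3)$ is correct and is the paper's argument in a heavier form. Since $X=(t^{-1},J)\calR'$, one has $XL=t^{-1}L+JL$, so $[XL]_n=t^{-1}L_{n+1}+JL_n$, and the double sum over $(m,k)$ is unnecessary. Your remark that $L$ is concentrated in degrees $\ge 0$ is wrong: $X\subseteq\calR'$ gives $\calR'\subseteq L$, so $L_n=At^n$ for $n\le 0$. Your case analysis already handles those terms correctly, so nothing breaks. The direction $(1)\Rightarrow(2)$, via $\tr_{\calR'}(\rmK_{\calR'})G\subseteq\tr_G(\rmK_G)$, is also fine.

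\textbf{The gap: $(2)\Rightarrow(1)$.} Your ``safer'' justification is false. Generic Gorensteinness does not give $\Ext^1_R(\rmK_R,R)=0$. Take $R=k[[t^3,t^4,t^5]]$ and $x=t^3$. Then $\Hom_R(\rmK_R,R)\cong\m$, so $\Hom_R(\rmK_R,R)/x\Hom_R(\rmK_R,R)$ has length $3$. On the other hand $R/xR\cong k[y,z]/(y,z)^2$, and $\Hom(\rmK_{R/xR},R/xR)$ has length $4$. Hence $\Hom$ does not commute with reduction mod $x$, and $\Ext^1_R(\rmK_R,R)\neq 0$.

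In your setting the lifting claim is in fact equivalent to $\Ext^1_{\calR'}(X,\calR')=0$. Indeed, if $c=\sum f_i(w_i)\in\tr_R(\rmK_R)$, then multiplication by $c$ on $R$ factors through $\rmK_R^{\oplus n}$. Since $\Ext^1_R(\rmK_R,\rmK_R)=0$, the trace ideal annihilates $\Ext^1_R(\rmK_R,R)$. Because $t^{-1}$ lies in the trace, the cokernel of $\Hom_{\calR'}(X,\calR')/t^{-1}\Hom_{\calR'}(X,\calR')\to\Hom_G(X/t^{-1}X,G)$ is all of $\Ext^1_{\calR'}(X,\calR')$. You give no valid reason for that module to vanish.

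So the inclusion $\tr_G(\rmK_G)\subseteq\tr_{\calR'}(\rmK_{\calR'})G$ cannot be obtained by lifting homomorphisms. The paper takes it from Miyazaki's Proposition~3.8, applied after observing that $t^{-1}\in\tr_{\calR'}(\rmK_{\calR'})$. Your other route, ``the known equality,'' is not available as a general citation either. The standard fact in Herzog--Hibi--Stamate is the inclusion you already used for $(1)\Rightarrow(2)$; the reverse inclusion is the nontrivial input.

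\textbf{A second missing use of $t^{-1}\in XL$.} You proved this membership in degree $-1$ but never used it in this step, and it is needed at the very end. From $\fkM G\subseteq\tr_{\calR'}(\rmK_{\calR'})G$ you only get $\fkM\subseteq\tr_{\calR'}(\rmK_{\calR'})+t^{-1}\calR'$. It is $t^{-1}\in\tr_{\calR'}(\rmK_{\calR'})$ that absorbs the second summand.

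Without such a hypothesis the implication ``$R/xR$ nearly Gorenstein $\Rightarrow$ $R$ nearly Gorenstein'' fails. For example, $R=k[[t^3,t^7,t^8]]$ has canonical trace $(t^6,t^7,t^8)\not\ni t^3$, so it is not nearly Gorenstein. Yet $R/t^3R$ has square-zero maximal ideal and is nearly Gorenstein.
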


\begin{proof}
$(1) \Leftrightarrow (2)$ Let $\rmK_G$ be the graded canonical module of $G$. Note that $t^{-1}\in X\cdot L=\tr_{\calR'}(\rmK_{\calR'})$. 
Localizing at the graded maximal ideal, we may apply \cite[Proposition 3.8]{Miyazaki} to the homogeneous non-zerodivisor $t^{-1}$ on $\calR'$. Since $G\cong \calR'/(t^{-1})$, it follows that $\tr_G(\rmK_G) = \tr_{\calR'}(\rmK_{\calR'})G$. Therefore, the nearly Gorenstein property for $\calR'$ is equivalent to that for $G$.

$(1) \Leftrightarrow (3)$ Since $\tr_{\calR'}(\rmK_{\calR'})=X\cdot L=t^{-1}L + JL$, we have, for each $n\in \Bbb Z$, 
$$
[\tr_{\calR'}(\rmK_{\calR'})]_n = [t^{-1}L + JL]_n=t^{-1}L_{n+1} + J L_n. 
$$ 
Assume that $\calR'$ is nearly Gorenstein. Then $\fkM\subseteq \tr_{\calR'}(\rmK_{\calR'})$, and comparing the degree $0$ part we obtain
$$
\fkm = \fkM_0 \subseteq [\tr_{\calR'}(\rmK_{\calR'})]_0 = t^{-1}L_{1} + J L_0 = (I:_AJ) + J
$$
where the last equality follows from $L_0=A:_A J=A$. Similarly, comparing the degree $1$ part yields
$$
It = \fkM_1 \subseteq [\tr_{\calR'}(\rmK_{\calR'})]_1 = t^{-1}L_{2} + J L_1 = (I \cap (I^2:_AJ))t + J(I:_AJ)t.
$$
As $A/Q$ is an Artinian Gorenstein ring and $I^2=QI$, it follows that $I^2:_AJ \subseteq  I$, and hence $I = (I^2:_AJ) + J(I:_AJ)$. 
Conversely, assume $(3)$. Then the conditions in $(3)$ imply $\m\subseteq \tr_{\calR'}(\rmK_{\calR'})$ and $It\subseteq \tr_{\calR'}(\rmK_{\calR'})$. 
Moreover, since $\fkM=(\m,It,t^{-1})\calR'$ and $t^{-1}\in \tr_{\calR'}(\rmK_{\calR'})$, it follows that $\fkM\subseteq \tr_{\calR'}(\rmK_{\calR'})$. 
Hence $\calR'(I)$ is nearly Gorenstein.
\end{proof}

\begin{rem}\label{4.4}
Assume $I \ne Q$. By \cite[Corollary 2.11 (b)]{U} (see also \cite[Proposition (2.2)]{GIW}), the extended Rees algebra $\calR'(I)$ is Gorenstein if and only if $I=J$, i.e., $I$ is a good ideal in the sense of \cite[page 2310]{GIW}. Proposition \ref{4.3} shows that $\calR'(I)$ is nearly Gorenstein but not Gorenstein if and only if $\m = (I:_AJ) + J$ and $I = (I^2:_AJ) + J(I:_AJ)$.
\end{rem}

\begin{cor}\label{4.5}
If the Rees algebra $\calR(I)$ is nearly Gorenstein, then the extended Rees algebra $\calR'(I)$ is also nearly Gorenstein.
\end{cor}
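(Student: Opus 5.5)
We verify condition $(3)$ of Proposition \ref{4.3} directly from the conditions that the nearly Gorenstein property of $\calR(I)$ imposes. We are in the setup of Section 4: $A$ is Gorenstein, $d\ge 2$, $I^2=QI$, and $J=Q:_AI$. We split into the cases $d=2$, $d=3$, and $d=4$. No other case arises: by Proposition \ref{2.13}(1), near Gorensteinness of $\calR(I)$ forces $d\le 4$.

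If $\calR(I)$ is Gorenstein, then $\calR'(I)$ is Gorenstein (for instance via Goto--Shimoda, since $G(I)$ is then Gorenstein and $\calR'(I)$ is Gorenstein iff $G(I)$ is). So we may assume $\calR(I)$ is nearly Gorenstein but not Gorenstein.

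\textbf{Case $d=2$.} Here $I\neq Q$, and we read off the conditions from the proof of Proposition \ref{3.5}, which uses only $I^2=QI$ and $I\ne Q$, both in force. We get $\m\subseteq J$ and $I\subseteq J(I:_AJ)$. The first gives $\m\subseteq (I:_AJ)+J$ at once. For the second, $J(I:_AJ)\subseteq I$ always holds, and $I^2:_AJ\subseteq I$ was noted in the proof of Proposition \ref{4.3}. Hence
\[
I=(I^2:_AJ)+J(I:_AJ),
\]
and Proposition \ref{4.3} gives the claim.

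\textbf{Case $d=3$.} Proposition \ref{2.13}(2) gives $\m=I:_AJ$ and $I=(I^2:_AJ)+\m J$. The first yields $\m\subseteq (I:_AJ)+J$. Substituting $\m=I:_AJ$ into the second gives $I=(I^2:_AJ)+J(I:_AJ)$, which is exactly condition $(3)$.

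\textbf{Case $d=4$.} Proposition \ref{2.13}(3) gives $I=\m$. Then $\m=I\subseteq I:_AJ$, so the first condition holds. For the second, we show $I^2:_AJ\supseteq I$, i.e. $\m J\subseteq \m^2$. If $J\subseteq\m$ this is clear. Otherwise $J=A$, which means $I\subseteq Q$, so $I=Q$; but then $\calR(I)$ is Gorenstein, contrary to our reduction. This gives $I\subseteq I^2:_AJ$, so $I=(I^2:_AJ)+J(I:_AJ)$.

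The main point requiring care is the $d=2$ case. There we must justify that the derivation in Proposition \ref{3.5} of $\m\subseteq J$ and $I\subseteq J(I:_AJ)$ uses only $I^2=QI$ and $I\ne Q$, and not the hypothesis $\overline{Q}\ne Q$, which served only to reduce to $I^2=QI$. In all three cases the needed inclusions are trivial consequences of the established conditions, so this is the only real check.
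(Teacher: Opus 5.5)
Your overall strategy is the paper's: reduce to the non-Gorenstein case, get $d\le 4$ from Proposition \ref{2.13}(1), and verify condition (3) of Proposition \ref{4.3} case by case using Propositions \ref{3.5} and \ref{2.13}. Your cases $d=2$ and $d=3$ are correct. In $d=2$ your worry about $\overline{Q}\ne Q$ is unnecessary: $Q\subsetneq I\subseteq\overline{Q}$, so Proposition \ref{3.5} applies verbatim and even yields $J=\m$, which is how the paper argues.

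The case $d=4$, however, contains a false step. You exclude $J=A$ by claiming that $I=Q$ would make $\calR(I)$ Gorenstein. The equivalence ``$\calR(I)$ Gorenstein $\iff I=Q$'' is Remark \ref{2.8}, and it holds only for $d=2$. When $d=4$ one has $\rma(G(Q))=-4\ne -2$, so $\calR(Q)$ is not Gorenstein. In fact Proposition \ref{2.13}(3) states that $\calR(I)$ is never Gorenstein in dimension four. So your reduction to the non-Gorenstein case gives no information there, and the case $J=A$ is still open in your argument.

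This matters, because for $J=A$ the inclusion $I\subseteq I^2:_AJ$ you set out to prove reads $\m\subseteq\m^2$, which is false. There are two easy repairs.
\begin{enumerate}
\item[(i)] Argue as the paper does. If $J=A$, then $(I^2:_AJ)+J(I:_AJ)=I^2+I=I$ and $\m\subseteq A=(I:_AJ)+J$. So condition (3) holds trivially, and you never need $I\subseteq I^2:_AJ$ in that case.
\item[(ii)] Use the degree-zero part of the canonical trace computed in the proof of Proposition \ref{2.13}(3). Near Gorensteinness gives $\m\subseteq I\cap(I^2:_AJ)$, hence $I=\m\subseteq I^2:_AJ$ directly. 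This also shows that $J=A$ cannot actually occur, since it would force $\m\subseteq\m^2$.
\end{enumerate}
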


\begin{proof}
By \cite[Corollary 3.7]{Ikeda}, we may assume that $\calR(I)$ is not Gorenstein. 
Recall from Proposition \ref{4.3} that $\calR'(I)$ is nearly Gorenstein if and only if $\m \subseteq (I:_A J)+J$ and $I=(I^2:_A J)+J(I:_A J)$. 
Since $\calR(I)$ is nearly Gorenstein, Proposition \ref{2.13} shows that $d\le 4$. 
We verify the two conditions case by case. If $d=2$, Proposition \ref{3.5} yields $I=Q:_A\m$ and $I=\m(I:_A\m)$. In particular, $J=Q:_A I=\m$, so the above conditions are satisfied.
Assume that $d=3$. Proposition \ref{2.13} shows that $\m=I:_A J$ and 
$I=(I^2:_A J)+\m J$. Hence the conditions in Proposition \ref{4.3} hold. 
Finally, suppose $d=4$. Then Proposition \ref{2.13} gives $I=\m$. 
If $J=A$ the conditions are immediate, while if $J\ne A$ then $I=J=\m$ and they again follow. Therefore $\calR'$ is nearly Gorenstein.
\end{proof}

\begin{ex}\label{4.6}
Let $A=k[[X, Y]]$ be the formal power series ring over a field $k$, and let $\ell \ge 1$ be an integer. Set $I=\m^{\ell}$, $Q=(X^{\ell}, Y^{\ell})$, and $J=Q:_AI$. Then $I^2=QI$, $J=\m^{\ell-1}$, and $I:_AJ = \m$. 
Hence, $\calR'(I)$ is nearly Gorenstein for every $\ell \ge 1$. Therefore, in view of Example \ref{3.7}, the converse of Corollary \ref{4.5} does not hold.
\end{ex}

\begin{ex}\label{4.7}
Let $A=k[[X, Y]]$ be the formal power series ring over a field $k$. Set $I = (X^2, XY^2, Y^3)$ and $Q=(X^2, Y^3)$. By Example \ref{3.5a}, we have  $I^2=QI$,  $I=Q:_A\m$, and $I:_A\m=\m^2$.  Consequently, $J=Q:_AI =\m$ and  $I:_AJ  = \m^2$. On the other hand, a direct computation shows that $I^2:_A\m = (X^4, X^3Y, X^2Y^2, XY^4, Y^5)$. Therefore
$$
(I^2:_A J)+J(I:_A J)
=\m^3
$$
and hence $I\not\subseteq (I^2:_A J)+J(I:_A J)$. 
By Proposition \ref{4.3}, it follows that $\calR'(I)$ is not nearly Gorenstein.
\end{ex}

We now turn to the nearly Gorenstein property of extended Rees algebras over two-dimensional regular local rings. 

Unless otherwise specified, let $(A, \m)$ be a two-dimensional regular local ring with infinite residue class field. Let $I$ be an integrally closed $\m$-primary ideal of $A$, and assume that $I$ contains a minimal reduction $Q$ such that $\overline{Q} \ne Q$. Set $J=Q:_AI$. %Recall that, for each ideal $I$ of $A$, we define $\rmo(I) = \max\{n \in \Bbb Z \mid I \subseteq \m^n\}$.
By \cite[Corollary (6.15)]{GIW}, it should be noted that no good ideals exist in this setting. Hence the extended Rees algebra is never Gorenstein; see Remark \ref{4.4}. In particular, $I \ne J$.

We begin with the case in which the order of the ideal $I$ is at least $3$. Here, we say that an ideal is {\it simple} if it cannot be expressed as the product of two proper ideals. 

\begin{prop}\label{4.8}
Assume that $\rmo(I) \ge 3$. Then the following conditions are equivalent. 
\begin{itemize}
\item[$(1)$] The extended Rees algebra $\calR'(I)$ is nearly Gorenstein. 
\item[$(2)$] $I=\m^{\ell}$ for some $\ell \ge 3$.
\item[$(3)$] $I=\m J$. 
\item[$(4)$] $\m = I:_AJ$. 
\item[$(5)$] $J = I:_A\m$. 
\end{itemize}
\end{prop}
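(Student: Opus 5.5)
The plan is to prove the cycle $(2)\Rightarrow(1)\Rightarrow(4)\Rightarrow(5)\Rightarrow(3)\Rightarrow(2)$. Throughout I use that $I^2=QI$, since integrally closed ideals in a two-dimensional regular local ring have reduction number one. I also use that $I$ is contracted, so that $\mu_A(I)=\rmo(I)+1$ by \cite[Theorem 2.1]{Huneke-Sally}. Finally, $I\ne J$, as noted before the proposition.

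For $(2)\Rightarrow(1)$ I take $I=\m^{\ell}$ with a general minimal reduction $Q=(x^\ell,y^\ell)$. Example \ref{4.6} then gives $J=\m^{\ell-1}$ and $I:_AJ=\m$, hence $\calR'(I)$ is nearly Gorenstein.

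For $(1)\Rightarrow(4)$, Proposition \ref{4.3} gives $\m\subseteq (I:_AJ)+J$. If I can show $J\subseteq \m^2$, Nakayama's lemma forces $\m\subseteq I:_AJ$. Since $I:_AJ\ne A$ (because $J\not\subseteq I$), this yields $\m=I:_AJ$. The main obstacle is showing $\rmo(J)\ge 2$ when $\rmo(I)\ge 3$. My first attempt is to assume $J$ contains a regular parameter $x$, so that $xI\subseteq Q$, and then compare initial forms in $G(\m)=k[X,Y]$. The images of $Q$ in degrees $\rmo(I)$ and $\rmo(I)+1$ are spanned by at most two, respectively four, forms. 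Meanwhile $I$ has $\rmo(I)+1\ge 4$ minimal generators of order $\rmo(I)$, all multiplied by a linear form into $Q$. I expect that contractedness (or the length identity $\ell_A(A/J)=\ell_A(I/Q)$ coming from the Gorenstein duality of $A/Q$) makes this count contradictory. If the count does not work directly, I would fall back on the length identity combined with the Hoskin--Deligne type formula for $\ell_A(A/I)$.

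For $(4)\Rightarrow(5)$, $\m J\subseteq I$ gives $J\subseteq I:_A\m$. I then aim to compare lengths. On one side, $\ell_A((I:_A\m)/I)=\rmr(A/I)=\mu_A(I)-1=\rmo(I)$, by Hilbert--Burch. On the other side, I estimate $\ell_A(J/I)$ from below using $\ell_A(A/J)=\ell_A(I/Q)=\rme(I)-\ell_A(A/I)$. The goal is to get equality of lengths and hence $J=I:_A\m$.

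For $(5)\Rightarrow(3)$ I need $I=\m(I:_A\m)$. Since $I$ is contracted, $I:_A\m=I:_Ax$ for a suitable $x$. I would then show $\m(I:_A\m)$ is an integrally closed ideal (a product of integrally closed ideals, by \cite[Appendix 5, Theorem 2']{ZS}) with the same colength as $I$, using the length formula above, and conclude equality.

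For $(3)\Rightarrow(2)$, $J=Q:_AI$ is integrally closed by the same Zariski theory. Unique factorization of integrally closed ideals into simple ideals, applied to $I=\m J$, combined with the equality $\ell_A(A/J)=\ell_A(I/Q)$ (which must agree with $\ell_A(A/\m J)-\ell_A(A/\m J)$ bookkeeping), should force $J=\m^{\ell-1}$ by induction on the order. Hence $I=\m^{\ell}$, with $\ell=\rmo(I)\ge 3$.
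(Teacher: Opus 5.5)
Your outline correctly locates the crux of $(1)\Rightarrow(4)$, namely $J\subseteq\m^2$, but it never proves this, and the initial-form count you sketch cannot prove it. The count tacitly assumes that the $\rmo(I)+1$ minimal generators of $I$ have linearly independent initial forms of degree $\rmo(I)$. That fails for contracted ideals in general. For $I=\overline{(x^3,y^4)}=(x^3,x^2y^2,xy^3,y^4)$ one has $\rmo(I)=3$ and $\mu_A(I)=4$, yet the degree-$3$ initial forms of elements of $I$ span only the line through the initial form of $x^3$. Since $\lambda x^3\in Q=(x^3,y^4)$ for every $\lambda\in\m$, your degree $3$/degree $4$ comparison is satisfied by every linear form, even though $J=\m^2$ here. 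The length fallback does not help either, because colengths do not see the order of $J$: the complete ideals $(x,y^m)$ have order one and arbitrary colength.

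The missing ingredient is the determinantal description of $J$ used in the paper. Take $n=\mu_A(I)=\rmo(I)+1$ and a Hilbert--Burch matrix $\Bbb M$ of size $n\times(n-1)$ with entries in $\m$ and $I=\rmI_{n-1}(\Bbb M)$. Huneke--Swanson then give $J=Q:_AI=\rmI_{n-2}(\Bbb M)$, an integrally closed ideal. Hence $J\subseteq\m^{n-2}\subseteq\m^2$, and Nakayama finishes $(1)\Rightarrow(4)$ as you intended.

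The rest of your cycle states goals rather than arguments.
For $(4)\Rightarrow(5)$ you need $\ell_A(J/I)\ge\rmo(I)$, i.e.\ $2\ell_A(A/I)-\rme(I)\ge\rmo(I)$, which is never established.
For $(5)\Rightarrow(3)$ the colength equality $\ell_A(A/\m J)=\ell_A(A/I)$ is equivalent to $\mu_A(J)=\rmo(I)$, again an unproved statement about $J$.
The identity you invoke in $(3)\Rightarrow(2)$ is garbled: as written, $\ell_A(A/\m J)-\ell_A(A/\m J)=0$. Unique factorization alone also says nothing about how $Q:_AI$ interacts with the factors of $I$.

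The determinantal description makes the length comparisons unnecessary. Laplace expansion gives $I=\rmI_{n-1}(\Bbb M)\subseteq\m\,\rmI_{n-2}(\Bbb M)=\m J$, so $(4)$ yields $\m J\subseteq I\subseteq\m J$. The implication $(5)\Rightarrow(4)$ is immediate from $I\ne J$. For $(3)\Rightarrow(2)$ the paper uses Lipman's $J=\adj(I)$ together with $\adj(I_1I'):_AI'=\adj(I_1)$ for a Zariski factorization $I=I_1I'$. Since $I:_A\m=\m J:_A\m=J$, this gives $\adj(I_1)=I_1:_A\m$. Applying the implications already proved to $I_1$ gives $I_1=\m\,\adj(I_1)$, and simplicity forces $I_1=\m$.

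Your length idea can be completed for $(4)\Rightarrow(2)$ via Hoskin--Deligne. Summing over infinitely near points, with $r_S$ the order of the transform of $I$, one has $\ell_A(J/I)=2\ell_A(A/I)-\rme(I)=\sum_S r_S\,[S/\m_S:A/\m]$. This is at least $\rmo(I)$, with equality only for $I=\m^{\rmo(I)}$. On the other hand, $(4)$ forces $\ell_A(J/I)\le\ell_A((I:_A\m)/I)=\rmo(I)$. However, this still gives nothing for $(1)\Rightarrow(4)$.
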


\begin{proof}
Since $I$ is integrally closed, it is contracted. Hence $\mu_A(I)=\rmo(I)+1\ge 4$. 
Set $n=\mu_A(I)$. Since $\height_A I=2$, the Hilbert-Burch theorem yields an exact sequence
$$
0 \to A^{\oplus (n-1)} \overset{\Bbb M}{\longrightarrow} A^{\oplus n} \to A \to A/I \to 0
$$
of $A$-modules such that $I=\rmI_{n-1}(\Bbb M)$, where $\Bbb M$ is an $n\times (n-1)$-matrix with entries in $\m$. Then \cite[(3.1) Lemma, (3.3) Proposition]{Huneke-Swanson} shows that
$$
J=Q:_AI = \rmI_{n-2}(\Bbb M)
$$
and that $J$ is integrally closed. Since $n\ge 4$, it follows that
$J\subseteq \m^{n-2}\subseteq \m^2$.

$(1)\Rightarrow(4)$ Since $\calR'(I)$ is nearly Gorenstein, one has $\m=(I:_AJ)+J$. Since $J\subseteq \m^2$, it follows that $\m=(I:_AJ)+\m^2$, whence $\m=I:_AJ$. 

$(5)\Rightarrow(4)$ This follows from the fact that $I\ne \m$ and $I\ne J$.

$(4)\Rightarrow(3)$ Assume that $\m=I:_AJ$. Then $\m J\subseteq I$.  Moreover, we have
\[
I=\rmI_{n-1}(\Bbb M)\subseteq \m\cdot \rmI_{n-2}(\Bbb M)=\m J
\]
which shows $I=\m J$. %If $I=\m J$, then one has $I:_AJ=\m$ because $I\ne J$. 

$(3)\Rightarrow(2)$ Since $I$ is integrally closed, Zariski's factorization theorem (\cite[Appendix 5, Theorem 3]{ZS}, \cite[Theorem 3.9]{Huneke}, \cite[Theorem 14.4.9]{SH}) yields a factorization
\[
I=I_1I_2\cdots I_\ell,
\]
where $\ell\ge1$ and each $I_i$ is a simple integrally closed $\m$-primary ideal.
We first show that $I$ is not simple. Indeed, if $I$ were simple, then from $I=\m J$ it would follow that $J=A$, and hence $I=\m$, contradicting the assumption $\rmo(I)\ge3$. Thus $I$ is not simple.
We next show that $I_i=\m$ for every $1\le i\le \ell$. Fix $i$. By reordering the factors if necessary, we may assume that $i=1$. Set $I'=I_2\cdots I_\ell$. Then $I'$ is again an integrally closed $\m$-primary ideal. By \cite[Proposition (3.3)]{Lipman} (see also \cite[Proposition 18.6.4]{SH}), one has
$$
J=Q:_AI=\adj(I)
$$
where $\adj(I)$ denotes the adjoint ideal of $I$. Hence
$$
J:_AI'=\adj(I):_AI'=\adj(I_1I'):_AI'=\adj(I_1),
$$
where the last equality follows from \cite[Proposition 18.2.1]{SH}.
On the other hand, since $I=\m J$ and $J$ is integrally closed, we have
$I:_A\m=\m J:_A\m=J$ (see \cite[Corollary 6.8.7]{SH}). Therefore, using the fact that $I_1$ is integrally closed, we obtain
\begin{eqnarray*}
\adj(I_1) \!\!&=&\!\! J:_AI' =(I:_A\m):_AI' = I:_A \m I' = (I:_AI'):_A\m \\
\!\!&=&\!\! (I_1 I':_AI'):_A\m = I_1:_A \m.
\end{eqnarray*}
Thus, by the implication $(5)\Rightarrow(3)$ already proved, we get
$I_1 = \m\adj(I_1)$. Since $I_1$ is simple, it follows that $\adj(I_1)=A$, and therefore $I_1=\m$. 
As $i$ was arbitrary, we conclude that $I_i=\m$ for all $1\le i\le \ell$. Consequently
$$
I=I_1I_2\cdots I_\ell=\m^\ell.
$$
Since $\rmo(I)\ge3$, necessarily $\ell\ge3$.

$(2)\Rightarrow(1), (5)$ Let $x,y$ be a regular system of parameters of $A$, and set
$I=\m^\ell$, $Q=(x^\ell,y^\ell)$. Then the same argument as in Example \ref{4.6} shows that the extended Rees algebra $\calR'(I)$ is nearly Gorenstein. The direct computation shows $J=\m^{\ell-1} = I:_A\m$. 
\end{proof}

We next consider the case where $\rmo(I) = 2$ and $I$ is not simple. 

\begin{lem}\label{4.9}
Assume that $\rmo(I) = 2$. If the extended Rees algebra $\calR'(I)$ is nearly Gorenstein, then $J=\m$.
\end{lem}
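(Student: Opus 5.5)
From Proposition \ref{4.3}, the nearly Gorenstein hypothesis gives $\m \subseteq (I:_AJ) + J$. The plan is to combine this with the Hilbert--Burch description of $J$ used in the proof of Proposition \ref{4.8}, and then treat the two possibilities for $J$ separately.

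\emph{Setup.} Since $I$ is integrally closed, it is contracted, so $\mu_A(I)=\rmo(I)+1=3$. The Hilbert--Burch theorem therefore presents $I=\rmI_2(\Bbb M)$ for a $3\times 2$ matrix $\Bbb M$ with entries in $\m$. By \cite[(3.1) Lemma, (3.3) Proposition]{Huneke-Swanson}, $J=Q:_AI=\rmI_1(\Bbb M)$ is the ideal generated by the entries of $\Bbb M$, and $J$ is integrally closed. In particular $I\subseteq J\subseteq \m$, and $I\subseteq \m J$ because each $2\times2$ minor is a sum of products of two entries.

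\emph{Case $J\not\subseteq \m^2$.} Here we want to show $J=\m$. Since $I\ne J$, we have $J\ne A$, so $I:_AJ\subseteq \m$. Suppose $J\ne\m$. Then $J$ is an integrally closed $\m$-primary ideal of order $1$, so $\mu_A(J)=2$ and we may write $J=(x,y^k)$ with $k\ge 2$ for a regular system of parameters $x,y$. Such an ideal is integrally closed.
\begin{itemize}
\item[] From $\m\subseteq (I:_AJ)+J$ we get $y\in (I:_AJ)+(x,y^k)$. Reducing modulo $(x,y^k)$, this forces some $u\in I:_AJ$ with $u\equiv y$ up to a unit modulo $(x,y^k)$. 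Adjusting $u$ by an element of $J$ keeps it in $(I:_AJ)+J$, but may take it out of $I:_AJ$, so the reduction must be done with care.
\item[] The aim is then to show $yJ\subseteq I$ and also $xJ\subseteq I$ (the latter from $x\in J$ together with $I\subseteq\m J$ and a length count), so that $\m J\subseteq I$ and hence $I=\m J=(x^2,xy,y^{k+1})$.
\item[] For this ideal one computes $Q:_AI=(x,y^k)=J$, which is consistent. So the argument cannot end at this point; it must produce a contradiction, for instance $\m\not\subseteq (I:_AJ)+J$.
\end{itemize}
I am not sure this case yields a contradiction. The classification in Theorem \ref{4.12} lists $(x^2,xy,y^{n+1})$ among the nearly Gorenstein cases, and that ideal has $J=(x,y^n)$. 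So either the statement is meant with $J$ up to the appropriate reduction, or the case $J\not\subseteq\m^2$ must be analysed more carefully. The main obstacle is exactly this: checking the condition $\m\subseteq (I:_AJ)+J$ concretely for $I=(x^2,xy,y^{n+1})$ with $n\ge2$. There $I:_AJ\supseteq \m$ suggests the condition holds with $J\neq\m$, so I would recheck how $Q$, and hence $J$, is chosen.

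\emph{Case $J\subseteq\m^2$.} Here the argument of $(1)\Rightarrow(4)$ in Proposition \ref{4.8} applies verbatim. It gives $\m=I:_AJ$, hence $\m J\subseteq I\subseteq \m J$, so $I=\m J$. Comparing orders gives $\rmo(J)=1$, which contradicts $J\subseteq\m^2$. So this case cannot occur, and the content of the lemma rests on the first case.
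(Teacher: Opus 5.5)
\textbf{There is a genuine gap.} Your Case $J\not\subseteq\m^2$ is, as you say, the entire content of the lemma, and it is never resolved. The bulleted steps there (finding $u$, proving $yJ\subseteq I$ and $xJ\subseteq I$) are stated as aims, not proved.

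Your doubt about the statement rests on a miscomputation. For $I=(x^2,xy,y^{n+1})$ one does \emph{not} have $J=(x,y^n)$; that ideal is $I:_A\m$. One can check directly:
\begin{itemize}
\item[$\bullet$] The syzygies $y\cdot x^2-x\cdot xy=0$ and $y^n\cdot xy-x\cdot y^{n+1}=0$ give a Hilbert--Burch matrix with entries $\pm x$, $y$, $y^n$. Hence $J=\rmI_1(\Bbb M)=\m$.
\item[$\bullet$] Equivalently, the proof of Lemma~\ref{3.1} shows $I=Q:_A\m$ for $Q=(xy,x^2+y^{n+1})$. Since $A/Q$ is Gorenstein, $J=Q:_A(Q:_A\m)=\m$.
\end{itemize}
Also, $J=\rmI_1(\Bbb M)$ depends only on $I$, not on the choice of $Q$. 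So this example agrees with the lemma. In your own Case 1, had you actually reached $I=\m J$ with $J=(x,y^k)$, $k\ge 2$, the correct value $Q:_AI=\m\ne J$ would have given the contradiction you wanted.

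\textbf{The missing idea} is a one-step sharpening of something you already wrote. Each $2\times 2$ minor of $\Bbb M$ is a sum of products of two entries of $\Bbb M$, and \emph{both} entries lie in $J=\rmI_1(\Bbb M)$. So in fact $I\subseteq J^2$, not merely $I\subseteq \m J$. Then:
\begin{itemize}
\item[$\bullet$] By the determinant trick, $I:_AJ\subseteq J^2:_AJ\subseteq \overline{J}=J$, using that $J$ is integrally closed.
\item[$\bullet$] The condition from Proposition~\ref{4.3} therefore reads $\m\subseteq (I:_AJ)+J=J$.
\item[$\bullet$] Since $J\ne A$, this gives $J=\m$ at once, with no case split.
\end{itemize}
Note that $J\ne A$ follows from $I\ne Q$, since $J=A$ exactly when $I\subseteq Q$. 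It does not follow from $I\ne J$, as you wrote. Finally, your Case $J\subseteq\m^2$ is vacuous from the start: $I\subseteq J^2\subseteq \m^4$ would contradict $\rmo(I)=2$.
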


\begin{proof}
Since $\rmo(I)=2$ and $\overline{I}=I$, we obtain $\mu_A(I)=3$. 
By the proof of Proposition \ref{4.8}, it follows that $I=\rmI_2(\Bbb M)$ and $J=\rmI_1(\Bbb M)$. Thus $I \subseteq J^2$. Hence $I:_AJ \subseteq J^2:_AJ = J$ (\cite[Corollary 6.8.7]{SH}). The nearly Gorenstein property of $\calR'(I)$ implies $\m \subseteq (I:_AJ) + J = J \subsetneq A$. Therefore $J=\m$.
\end{proof}

\begin{prop}\label{4.10}
Assume that $\rmo(I) = 2$ and $I$ is not simple. Then the following conditions are equivalent. 
\begin{itemize}
\item[$(1)$] The extended Rees algebra $\calR'(I)$ is nearly Gorenstein. 
\item[$(2)$] There exists a regular system of parameters $x, y$ of $A$ and an integer $n \ge 1$ such that $I=(x^2, xy, y^{n+1})$. 
\end{itemize}
\end{prop}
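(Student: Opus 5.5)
For $(2)\Rightarrow(1)$ I would use the computation in the proof of Lemma \ref{3.1}. With $I=(x^2,xy,y^{n+1})$ and $Q=(xy,\,x^2+y^{n+1})$ one has $I^2=QI$, $I=Q:_A\m$ and $I=\m(I:_A\m)$. So $\calR(I)$ is nearly Gorenstein by Proposition \ref{3.5}, and Corollary \ref{4.5} gives that $\calR'(I)$ is nearly Gorenstein. The standing assumptions hold: $I$ is integrally closed (it is $\m\cdot(x,y^n)$), and it is not simple, being the product $\m(x,y^n)$. Concretely, $J=\m$, so $I:_AJ=(x,y^n)$ and the first condition of Proposition \ref{4.3}(3) is immediate.

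For $(1)\Rightarrow(2)$ I start from Lemma \ref{4.9}, which gives $J=\m$. Then $I=Q:_AJ=Q:_A\m$ because $A/Q$ is Gorenstein. Proposition \ref{4.3}(3) now reads
\[
I=(I^2:_A\m)+\m(I:_A\m).
\]
Since $I$ is integrally closed of order $2$ and not simple, Zariski's factorization gives $I=I_1I_2$ with $\rmo(I_1)=\rmo(I_2)=1$. Each $I_i$ is then a simple integrally closed ideal of order one, hence of the form $(x_i,y_i^{n_i})$ for a regular system of parameters $x_i,y_i$. (Order one forces $\mu(I_i)=2$ and $I_i$ to contain an element of $\m\setminus\m^2$; using that $A/(x_i)$ is a DVR gives the stated form.)

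The goal is to show that one factor equals $\m$. Then $I=\m(x,y^n)$, which is exactly (2). I see two routes.

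The first route: if $I=\m(I:_A\m)$, then Proposition \ref{3.5} makes $\calR(I)$ nearly Gorenstein, and Lemma \ref{3.1} yields (2). So it suffices to show $I^2:_A\m\subseteq \m(I:_A\m)+\dots$, or more directly that $I\subseteq \m(I:_A\m)$. This requires controlling $I^2:_A\m$.

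The second route works with the adjoint. We have $J=\adj(I)=\m$, and for a product $I_1I_2$ of two order-one simple ideals the adjoint is computed via Lipman's formula. If both $I_i\ne\m$, then $\adj(I_1I_2)=\adj(I_1)I_2+\dots$, and I expect $\adj(I)\neq\m$ unless $I_1=I_2$ with $n_i\ge2$.

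The main obstacle is therefore the case where neither factor is $\m$. The cleanest case is $I=I_1^2$ with $I_1=(x,y^m)$, $m\ge2$, so that $I=(x^2,xy^m,y^{2m})$. Here $J=\m$ would hold, but I must show that the displayed equality fails. I would compute it by hand: $I:_A\m=(x^2,xy^{m-1},y^{2m-1})$, so $\m(I:_A\m)\subseteq(x^3,x^2y,xy^m,y^{2m})$ misses $x^2$. Then I would check that $I^2:_A\m$ also lies in $(x^3,x^2y,\dots)+I\m$, so that $x^2\notin$ RHS. This is the same pattern as Example \ref{4.7}.

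For the case $I_1\ne I_2$, both different from $\m$, I would first check that $J\ne\m$. The ideal $J=\rmI_1(\Bbb M)$ equals $\m$ only if some entry of the Hilbert–Burch matrix is a unit modulo $\m^2$. Comparing with the generators of $I_1I_2$ should show this forces a common linear form, giving either $I_1=I_2$ or one factor equal to $\m$. That reduces everything to the computation above.
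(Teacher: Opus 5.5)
\textbf{The gap is in $(1)\Rightarrow(2)$.} Your $(2)\Rightarrow(1)$ is fine and is essentially the paper's argument (Theorem \ref{3.3} plus Corollary \ref{4.5}). Your target for $(1)\Rightarrow(2)$ is also right: show that one of the factors equals $\m$. But the case that actually needs an argument is never handled, because your case analysis reverses which cases are hard.

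First, take $I=I_1^2$ with $I_1=(x,y^m)$, $m\ge 2$. Then $I=(x^2,xy^m,y^{2m})$ has relations $(y^m,-x,0)$ and $(0,y^m,-x)$, so $J=\rmI_1(\Bbb M)=(x,y^m)\neq\m$. So $J=\m$ does not hold here, contrary to what you say, and Lemma \ref{4.9} already excludes this case. Your hand computation of $(I^2:_A\m)+\m(I:_A\m)$ tests a condition that is not the one in Proposition \ref{4.3}(3) for this $I$.

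Second, the reverse situation does occur. Take $I=(x,y^2)(y,x^2)=(xy,x^3,y^3)$: both factors differ from $\m$, and the relations are $(x^2,-y,0)$ and $(y^2,0,-x)$. Hence $J=\rmI_1(\Bbb M)=\m$ and $I=Q:_A\m$. So your planned step ``for $I_1\ne I_2$, both different from $\m$, check $J\ne\m$'' is false, and nothing in your proposal excludes such ideals. Your first route, showing $I\subseteq\m(I:_A\m)$, is only stated and not carried out.

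\textbf{The missing idea: compare initial forms in degree $2$.} This works for all cases at once. Write $k=A/\m$ and $G(\m)\cong k[X,Y]$.
\begin{enumerate}
\item From $J=\m$ you get $I=(I^2:_A\m)+\m(I:_A\m)$.
\item Since $I\subseteq\m^2$, you have $I^2:_A\m\subseteq\m^4:_A\m=\m^3$.
\item Therefore $I:_A\m\not\subseteq\m^2$, since otherwise $I\subseteq\m^3$. Pick $\xi\in(I:_A\m)\setminus\m^2$.
\item Then
\[
[I+\m^3]/\m^3=[\m(I:_A\m)+\m^3]/\m^3\supseteq k\,X\overline{\xi}+k\,Y\overline{\xi},
\]
which is two-dimensional over $k$.
\item On the other hand, if $I_1\ne\m$ and $I_2\ne\m$, each $(I_i+\m^2)/\m^2$ is spanned by a single $\overline{f_i}$. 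Then $[I+\m^3]/\m^3=k\,\overline{f_1f_2}$ is one-dimensional, a contradiction.
\end{enumerate}
So some factor equals $\m$, and $I=\m(x,y^n)$. In the example $(xy,x^3,y^3)$ one indeed has $I:_A\m=\m^2$, which is exactly how the condition fails.
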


\begin{proof}
$(2) \Rightarrow (1)$ This follows from Theorem \ref{3.3} and Corollary \ref{4.5}.

$(1) \Rightarrow (2)$ By Lemma \ref{4.9}, we have $J=\m$. Hence $I=(I^2:_A\m) + \m (I:_A\m)$. Since $\rmo(I)=2$, one has $I\subseteq \m^2$, and therefore $I^2:_A\m \subseteq \m^4:_A\m=\m^3$, i.e., $\rmo(I^2:_A\m) \ge 3$. Suppose that $I:_A\m\subseteq \m^2$. Then $\m(I:_A\m)\subseteq \m^3$, so that $I=(I^2:_A\m)+\m(I:_A\m)\subseteq \m^3$, 
which contradicts $\rmo(I)=2$. Thus $I:_A\m\nsubseteq \m^2$. Since $I\ne \m$, it follows that $\rmo(I:_A\m)=1$. 
Since $\rmo(I)=2$ and $I$ is not simple, by Zariski's factorization theorem (\cite[Appendix 5, Theorem 3]{ZS}, \cite[Theorem 3.9]{Huneke}, \cite[Theorem 14.4.9]{SH}), we may write
$$
I=I_1 I_2
$$
where each $I_i$ is a simple $\m$-primary integrally closed ideal with $\rmo(I_i)=1$. We now identify $G(\m)=\bigoplus_{n\ge0}\m^n/\m^{n+1}\cong k[X,Y]$, where $k[X,Y]$ denotes the polynomial ring over the field $k=A/\m$. For each $i=1,2$, consider the $k$-subspace
$$
W_i=(I_i+\m^2)/\m^2 \subseteq G(\m)_1=\m/\m^2.
$$
Since $\rmo(I_i)=1$, we have $1\le \dim_kW_i\le 2$.

\begin{claim}\label{claim2}
Either $I_1=\m$ or $I_2=\m$.
\end{claim}

\begin{proof}[Proof of Claim \ref{claim2}]
Suppose, to the contrary, that $I_1\ne \m$ and $I_2\ne \m$. Since $G(\m)_1=\m/\m^2$ is two-dimensional over $k$, Nakayama's lemma yields
$\dim_kW_i=1$ for each $i=1, 2$. 
Choose $f_i\in I_i\setminus \m^2$ such that $W_i=k\cdot \overline{f_i}$. 
We then have
$$
[I_1I_2+\m^3]/\m^3=W_1W_2=k\cdot \overline{f_1f_2}
$$
in $G(\m)\cong k[X,Y]$, and hence $\dim_k([I+\m^3]/\m^3)=1$. 
On the other hand, since $I=(I^2:_A\m)+\m(I:_A\m)$ and $I^2:_A\m\subseteq \m^3$, we get $[I+\m^3]/\m^3=[\m(I:_A\m)+\m^3]/\m^3$.
Set
$$
V=[(I:_A\m)+\m^2]/\m^2\subseteq G(\m)_1.
$$
As $\rmo(I:_A\m)=1$, we get $1\le \dim_kV\le 2$. Note that 
$[\m(I:_A\m)+\m^3]/\m^3=G(\m)_1\cdot V\subseteq G(\m)_2$. If $\dim_kV=1$, then $V=k\cdot \overline{\xi}$ for some $\xi\in (I:_A\m)\setminus \m^2$, and therefore
$$
G(\m)_1\cdot V
=
k\cdot X\overline{\xi}+k\cdot Y\overline{\xi}
$$
which is two-dimensional over $k$. If\/ $\dim_kV=2$, then $V=G(\m)_1$, so $G(\m)_1\cdot V=G(\m)_2$. In either case, $\dim_k\bigl([\m(I:_A\m)+\m^3]/\m^3\bigr)=\dim_k(G(\m)_1\cdot V)\ge 2$. 
This contradicts the fact that $\dim_k([I+\m^3]/\m^3)=1$. Hence, either $I_1=\m$ or $I_2=\m$.
\end{proof}

By Claim \ref{claim2}, we may assume without loss of generality that $I_1=\m$. Since $\rmo(I_2)=1$, there exists a regular system of parameters $x, y$ of $A$ and an integer $n\ge 1$ such that $I_2=(x,y^n)$. Therefore, $I=I_1I_2=\m(x,y^n)=(x^2,xy,y^{n+1})$, as required.
\end{proof}

Finally, we turn to the case where $\rmo(I) = 2$ and $I$ is simple.

\begin{prop}\label{4.11}
Assume that $\rmo(I) = 2$ and $I$ is simple. Then the extended Rees algebra $\calR'(I)$ is not nearly Gorenstein. 
\end{prop}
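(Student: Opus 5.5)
Argue by contradiction: assume $\calR'(I)$ is nearly Gorenstein. By Lemma \ref{4.9}, $J=\m$. Then Proposition \ref{4.3} gives
$$I=(I^2:_A\m)+\m(I:_A\m).$$
Exactly as in the proof of Proposition \ref{4.10}, $\rmo(I)=2$ forces $I^2:_A\m\subseteq \m^3$ and $\rmo(I:_A\m)=1$. Now use simplicity. By the proof of Proposition \ref{4.10} we have $[I+\m^3]/\m^3=G(\m)_1\cdot V$ with $V=[(I:_A\m)+\m^2]/\m^2\neq 0$, and $\dim_k(G(\m)_1\cdot V)\ge 2$. So the initial degree-two part of $I$ has dimension at least $2$. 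The plan is to contradict this using the structure of simple integrally closed ideals of order $2$.

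The key input is Zariski's theory: a simple (integrally closed, $\m$-primary) ideal $I$ of order $r$ has initial form ideal $I^*\subseteq G(\m)$ whose degree-$r$ part is spanned by $L^r$ for a single linear form $L$. That is, the tangent cone of a simple complete ideal is a power of a line; equivalently, $I$ has a single infinitely near base point in the first neighborhood. In our case $r=2$, so $\dim_k([I+\m^3]/\m^3)=1$. I would justify this as follows. By Zariski's unique factorization, the transform of $I$ in $A[\m/x]$ is a simple ideal supported at one point of the exceptional line. Moreover, the degree-$r$ forms of $I$ must vanish to order $r$ at that point of $\mathbb{P}^1$, which forces them to be multiples of $L^r$.

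An alternative route stays inside the paper's tools. Use Hilbert--Burch: $I=\rmI_2(\Bbb M)$ with $\Bbb M$ a $3\times2$ matrix over $\m$, and $J=\rmI_1(\Bbb M)=\m$. Then I would analyse the linear parts of $\Bbb M$. If the degree-two part of $I$ had dimension at least $2$, the linear parts of the columns should be independent enough to force $I=\m\cdot I_2$ for some $I_2$ of order $1$, or $I=\m^2$. Either conclusion contradicts simplicity. I expect pinning down this linear-algebra step, or citing the precise statement about tangent cones of simple ideals, to be the main obstacle. The quoted statement appears e.g.\ in \cite[Chapter 14]{SH} via the transform and the point basis.

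With $\dim_k([I+\m^3]/\m^3)=1<2\le\dim_k(G(\m)_1\cdot V)$, we obtain a contradiction. Hence $\calR'(I)$ is not nearly Gorenstein.
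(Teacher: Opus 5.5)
Your argument is correct and essentially matches the paper's. Assuming near-Gorensteinness, you get $J=\m$ and $[I+\m^3]/\m^3=[\m(I:_A\m)+\m^3]/\m^3\ni X\overline{f},\,Y\overline{f}$. You then contradict the fact that a simple complete ideal $I\neq\m$ has characteristic form $c(I)$ of degree $\rmo(I)=2$ (Huneke, Prop.~2.5; Swanson--Huneke, Prop.~14.1.12), which is exactly your statement $\dim_k([I+\m^3]/\m^3)=1$, so the Hilbert--Burch alternative is unnecessary. One small correction: over a non-closed residue field, $c(I)$ is a power of an irreducible form of degree $2$, not necessarily $L^2$ with $L$ linear; for example $c(I)=X^2+Y^2$ for $I=(x^2+y^2)+\m^3$ in $\mathbb{R}[[x,y]]$. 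All degree-two initial forms are still scalar multiples of $c(I)$, so your dimension count is unaffected.
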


\begin{proof}
By Lemma \ref{4.9}, we note that $J=\m$. Hence, if $\calR'$ were nearly Gorenstein, then $I=(I^2:_A\m)+\m(I:_A\m)$. As in the proof of Proposition \ref{4.10}, one has $[I+\m^3]/\m^3=[\m(I:_A\m)+\m^3]/\m^3$ and  $\rmo(I:_A\m)=1$. Choose $f\in (I:_A\m)\setminus \m^2$. Then
$f\m\subseteq \m(I:_A\m)$, so, identifying $G(\m)=\bigoplus_{n\ge0}\m^n/\m^{n+1}\cong k[X,Y]$ as the polynomial ring over $k=A/\m$, we obtain
$$
(f \m +\m^3)/\m^3 \subseteq [\m (I:_A\m)+\m^3]/\m^3 = [I+\m^3]/\m^3. 
$$
Since $(f\m+\m^3)/\m^3$ is generated, as a $k$-vector space, by $X\overline{f}$ and $Y\overline{f}$, it follows that
$$
X\overline{f},\,Y\overline{f}\in [I+\m^3]/\m^3.
$$
Let $c(I)$ denote the characteristic form of $I$, that is, the greatest common divisor of the elements of $[I+\m^3]/\m^3$ (see \cite[Remark 14.1.1 (6)]{SH}). Then $\overline{f}\mid c(I)$, and we can write $c(I)=\overline{f}\cdot\xi$
for some $\xi\in G(\m)$. Since $c(I)$ is a homogeneous polynomial of degree at most $\rmo(I)=2$ and $\deg \overline{f}=1$, the element $\xi$ is also homogeneous. 
If, moreover, we assume $\overline{f}^{2}\mid c(I)$. Since $c(I)$ divides every element of $[I+\m^3]/\m^3$, in particular $c(I)\mid X\overline{f}$ and $c(I)\mid Y\overline{f}$, it follows that $\overline{f}\mid X$ and $\overline{f}\mid Y$. This is impossible, because \(\deg \overline{f}=1\) and \(X,Y\) are relatively prime in $G(\m)\cong k[X,Y]$. Thus, the square $\overline{f}^2$ does not divide $c(I)$. 
Note that $c(I)$ is a power of an irreducible homogeneous polynomial (\cite[Exercise 14.6]{SH}). Therefore $\deg c(I)=1$. However, since $I$ is simple and integrally closed, \cite[Proposition 2.5]{Huneke} (see also \cite[Proposition 14.1.12]{SH}) yields $\deg c(I)=\rmo(I)=2$. 
This contradiction shows that \(\calR'(I)\) cannot be nearly Gorenstein.
\end{proof}

Combining the above observations, we reach the following which is the second main result of this paper. 

\begin{thm}\label{4.12}
Let $(A, \m)$ be a two-dimensional regular local ring with infinite residue class field. Let $I$ be an integrally closed $\m$-primary ideal of $A$ and assume that $I$ is not a parameter ideal. Then the following conditions are equivalent. 
\begin{itemize}
\item[$(1)$] The extended Rees algebra $\calR'(I)$ is nearly Gorenstein.  
\item[$(2)$] The associated graded ring $G(I)$ is nearly Gorenstein.  
\item[$(3)$] Either $I=\m^{\ell}$ for some $\ell \ge 2$, or else there exists a regular system of parameters $x, y$ of A and an integer $n \ge 1$ such that $I=(x^2, xy, y^{n+1})$. 
\end{itemize}
\end{thm}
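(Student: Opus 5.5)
The plan is to assemble Theorem \ref{4.12} from the results of this section. The main work is a case split on the order of $I$ and on whether $I$ is simple.

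\emph{Reduction to Proposition \ref{4.3}.} Because $A/\m$ is infinite, $I$ has a minimal reduction $Q$. Since $I$ is integrally closed in a two-dimensional regular local ring, \cite[Theorem 3.2]{Huneke-Sally} gives $I^2=QI$. Since $I$ is not a parameter ideal, $I\neq Q$. As $I$ is integrally closed, this also gives $\overline{Q}\supseteq I\supsetneq Q$, so $\overline{Q}\ne Q$. Hence both setups of this section are in force. The equivalence $(1)\Leftrightarrow(2)$ is then exactly Proposition \ref{4.3}. It remains to prove $(1)\Leftrightarrow(3)$.

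\emph{Implication $(3)\Rightarrow(1)$.} If $I=\m^\ell$ with $\ell\ge2$, this is Example \ref{4.6}; for $\ell\ge3$ it is also covered by Proposition \ref{4.8} $(2)\Rightarrow(1)$. If $I=(x^2,xy,y^{n+1})$, then Theorem \ref{3.3} shows that $\calR(I)$ is nearly Gorenstein, and Corollary \ref{4.5} transfers this to $\calR'(I)$.

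\emph{Implication $(1)\Rightarrow(3)$.} Since $I$ is not a parameter ideal, $I\ne\m$. As $I$ is contracted, $\mu_A(I)=\rmo(I)+1\ge3$, so $\rmo(I)\ge2$. We split into three cases.
\begin{itemize}
\item[$\bullet$] If $\rmo(I)\ge3$, Proposition \ref{4.8} $(1)\Rightarrow(2)$ gives $I=\m^\ell$ with $\ell\ge3$.
\item[$\bullet$] If $\rmo(I)=2$ and $I$ is not simple, Proposition \ref{4.10} gives $I=(x^2,xy,y^{n+1})$. This family includes $\m^2$ as the case $n=1$.
\item[$\bullet$] If $\rmo(I)=2$ and $I$ is simple, Proposition \ref{4.11} shows that $\calR'(I)$ cannot be nearly Gorenstein, so this case does not occur.
\end{itemize}

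There is no real obstacle, because all the substantive arguments are in the earlier propositions. The only point needing care is the reduction to the standing hypotheses ($I^2=QI$ and $\overline{Q}\ne Q$), and in particular justifying that $\overline{Q}=\overline{I}=I\ne Q$.
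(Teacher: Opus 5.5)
Your proposal is correct and follows the paper's proof. You get $(1)\Leftrightarrow(2)$ from Proposition~\ref{4.3} once $I^2=QI$ and $\overline{Q}\neq Q$ are in place, and $(1)\Leftrightarrow(3)$ by excluding $\rmo(I)=1$ via contractedness and then splitting into Propositions~\ref{4.8}, \ref{4.10} and \ref{4.11}; your direct appeal to Example~\ref{4.6}, and to Theorem~\ref{3.3} with Corollary~\ref{4.5}, for $(3)\Rightarrow(1)$ is exactly what those propositions use, and justifying $I^2=QI$ directly from integral closedness is, if anything, cleaner than the paper's ``we may assume $G(I)$ is Cohen--Macaulay'' step.
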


\begin{proof}
Let $Q$ be a minimal reduction of $I$. Since $I \ne Q$, it follows that $\overline{Q}\ne Q$.

$(1) \Leftrightarrow (2)$ We may assume that $G(I)$ is Cohen-Macaulay. By Lemma \ref{2.7}, we have $I^2=QI$. The equivalence then follows from Proposition \ref{4.3}.

$(1) \Leftrightarrow (3)$ Suppose $\rmo(I)=1$. Since $I$ is integrally closed, it is contracted, and hence $\mu_A(I)=\rmo(I)+1=2$. Thus $I$ is a parameter ideal, contrary to our assumption. Therefore, $\rmo(I)\ge 2$. The desired equivalence now follows from Propositions \ref{4.8}, \ref{4.10}, and \ref{4.11}.
\end{proof}

\begin{rem}\label{4.13}
When $\rmo(I) = 2$, the extended Rees algebra $\calR'(I)$ is nearly Gorenstein if and only if so is the Rees algebra $\calR(I)$. 
\end{rem}

\begin{conj}
Let $(A,\m)$ be a two-dimensional regular local ring with infinite residue field, and let $I$ be a non-parameter $\m$-primary ideal of $A$. If the extended Rees algebra $\calR'(I)$ is nearly Gorenstein, then $I$ is integrally closed.
\end{conj}

%%%%%%%%%%%%%%%%%%%%%%%%%%%%%%%%%%%%%%%%%%%%%%%%%%%%%%%%%%%%%%%%%%%%%%%%%%%%%%%%%%%%%%%%

%%%%%%%%%%%%%%%%%%%%%%%%%%%%%%%%%%%%%%%%%%%%%%%%%%%%%%%%%%%%%%

\end{document}